\documentclass[12pt,a4paper]{amsart}

\usepackage{amssymb}
\usepackage{graphicx}
\usepackage[all]{xy}
\usepackage{geometry}
\usepackage{xcolor}
\usepackage{hyperref}


\newcommand{\gal}{{\rm Gal}}

\newtheorem{lemma}{Lemma}[section]

\newtheorem{theorem}[lemma]{Theorem}
\newtheorem{corollary}[lemma]{Corollary}
\newtheorem{proposition}[lemma]{Proposition}

\theoremstyle{remark}

\theoremstyle{definition}

\newtheorem{necs-cond}[lemma]{Necessary Condition}
\newtheorem{definition}[lemma]{Definition}
\newtheorem{observation}[lemma]{Observation}

\newcommand\oline[1] {{\overline{#1}}}

\def\divides{{\,|\,}}

\newcommand\lSylow[1]{{ #1^{(\ell)} }}
\newcommand\lsylow[1]{{ #1^{(\ell)} }}

\newcommand{\mQ}{\mathbb{Q}}\newcommand{\QQ}{\mathbb{Q}}
\newcommand\FF{{\mathbb{F}}}
\newcommand\ff[1]{{\mathbb{F}_{#1}}}
\newcommand\ab[1]{{{#1}^{\text{ab}}}}
\newcommand{\mZ}{\mathbb{Z}}\newcommand{\ZZ}{\mathbb{Z}}

\newcommand{\fp}{\mathcal{\frak{p}}}
\newcommand{\fq}{\mathcal{\frak{q}}}
\newcommand{\fQ}{\mathcal{\frak{Q}}}
\newcommand{\ra}{\rightarrow}
\long\def\thesis#1\endt{#1}

\DeclareMathOperator\indlim{\mathop{\underrightarrow{\lim}}\limits }

\DeclareMathOperator\Gal{Gal}

\DeclareMathOperator\Aut{Aut}

\DeclareMathOperator\chr{char}
\DeclareMathOperator\id{id}

\DeclareMathOperator\Un{un}
\DeclareMathOperator\Sc{sc}
\DeclareMathOperator\Ht{ht}

\DeclareMathOperator\Mod{\,mod\,}
\DeclareMathOperator\Hom{Hom}

\DeclareMathOperator\HLG{H}

\DeclareMathOperator\im{Im}

\begin{document}

\title
{The Sylow subgroups of the absolute Galois group $\gal(\mQ)$}%

\def\UMich{Deptartment of Mathematics, 530 Church St., University of Michigan, Ann Arbor 48109, USA. }
\def\TAU{School of Mathematical Sciences, Tel Aviv University, Ramat Aviv, Tel Aviv 69978, Israel}
\author{Lior Bary-Soroker}
\address{\TAU}
\email{barylior@post.tau.ac.il}
\author{Moshe Jarden}
\address{\TAU}
\email{jarden@post.tau.ac.il}
\author{ Danny Neftin}
\address{\UMich}
\email{neftin@umich.edu}%

\begin{abstract}
We describe the $\ell$-Sylow subgroups of  $\Gal(\mQ)$ for an odd prime $\ell$, by observing and studying their decomposition as $F\rtimes \mathbb{Z}_\ell$, where $F$ is a free pro-$\ell$ group, and $\mZ_\ell$ are the $\ell$-adic integers. 
We determine the finite $\mathbb{Z}_\ell$-quotients of $F$ and more generally show that every split embedding problem of $\mathbb{Z}_\ell$-groups for $F$ is solvable. Moreover, we analyze the $\mathbb{Z}_\ell$-action on generators of $F$.
\end{abstract}

\maketitle

\section{Introduction}
The absolute Galois group  $\Gal(K) = \Aut(\tilde{K}/K)$ of a field $K$ with algebraic closure $\tilde{K}$ is a central object in Galois theory. The most interesting case in number theory is $K=\QQ$, or more generally when $K$ is a number field. Despite an extensive study   (e.g.\ class field theory,  Galois cohomology, Galois representation, field arithmetic, etc.), a determination of the entire group $\Gal(K)$ is unlikely to be achieved in the foreseeable future.

When $K$ is an $\ell$-adic field much more is known. The maximal pro-$\ell$ quotient  of $\Gal(K)$ is completely understood by the consecutive works of Shafarevich, Demuskin, Serre, and Labute --- it admits a presentation with countably many generators subject to at most one relation, see \cite[\S5.6]{Ser}.
This led Serre to ask about a larger part of $\Gal(K)$, namely, its {\bf $\ell$-Sylow subgroups}.
Recall that profinite groups admit Sylow theory similar to that of finite groups \cite[\S 2.3]{RZ}. In particular: an $\ell$-Sylow subgroup of a profinite group $G$ is a maximal pro-$\ell$ subgroup of $G$; every two $\ell$-Sylow subgroups of $G$ are conjugate; and the maximal pro-$\ell$ quotient of $G$ is a quotient of an $\ell$-Sylow subgroup of $G$.


Answering Serre's question for an $\ell$-adic field $K$,  Labute \cite{Lab} gives a presentation of the $\ell$-Sylow subgroups of
$\Gal(K)$   with countably many generators subject to one relation. His strategy is to view an $\ell$-Sylow subgroup of  $\Gal(K)$ as an inverse limit of the maximal pro-$\ell$ quotients of $\Gal(K')$, where $K'$ ranges over finite extensions of $K$ of degree prime to $\ell$. 

When $K$ is a number field less is known about the maximal pro-$\ell$ quotient $Q$ of $\gal(K)$.
Presentations of $Q$ are known up to the second term of its descending $\ell$-central series and only under restrictive assumptions on $K$, see \cite[\S 11.4]{Koch}. Thus, Labute's strategy to studying the Sylow subgroups of $\gal(K)$ is not applicable  when $K$ is a number field.

We take a new approach to studying the $\ell$-Sylow subgroups of $\gal(K)$ 
whose starting point is the following observation.



For an $\ell$-Sylow subgroup $P$ of $\gal(K)$ denote by $\lsylow{K}$ its fixed field, so that $P=\gal(\lsylow{K})$.  Denote by $\mu_{\ell^\infty}$ the group of $\ell$-power roots of unity.


\begin{observation}\label{thm:obs}
Let $K$ be a number field and $\ell$ an odd prime.
Let  $Z$ be the Galois group $\Gal(\lSylow{K}(\mu_{\ell^\infty})/\lSylow{K})$ and $F = \Gal(\lSylow{K}(\mu_{\ell^\infty}))$. Then $Z$ is isomorphic to the group $\ZZ_\ell$ of $\ell$-adic integers, $F$ is free pro-$\ell$ group on countably many generators, and the $\ell$-Sylow subgroups of $\Gal(K)$ decompose as:
\begin{equation}\label{eq:cyclotomic_decomposition}
\Gal(\lSylow{K}) = F\rtimes Z.
\end{equation}
\end{observation}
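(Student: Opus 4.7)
My plan is to split the observation into three claims and attack them in order of increasing difficulty. For the identification $Z \cong \mathbb{Z}_\ell$, I would push $P = \Gal(K^{(\ell)})$ through the restriction map $\Gal(K) \to \Gal(K(\mu_{\ell^\infty})/K)$; its image is an $\ell$-Sylow of the abelian group on the right, which sits inside $\mathbb{Z}_\ell^\times$. For odd $\ell$ the decomposition $\mathbb{Z}_\ell^\times \cong \mu_{\ell-1} \times \mathbb{Z}_\ell$ forces the unique $\ell$-Sylow of any closed subgroup of $\mathbb{Z}_\ell^\times$ to be isomorphic to $\mathbb{Z}_\ell$, and its fixed field is $K^{(\ell)} \cap K(\mu_{\ell^\infty})$. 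Translating up to $K^{(\ell)}$ yields $Z \cong \mathbb{Z}_\ell$. With $Z$ identified, the splitting in \eqref{eq:cyclotomic_decomposition} is nearly automatic: the sequence $1 \to F \to \Gal(K^{(\ell)}) \to Z \to 1$ is an extension of pro-$\ell$ groups, and $Z \cong \mathbb{Z}_\ell$, being a free pro-$\ell$ group of rank one, is projective in the category of pro-$\ell$ groups, so the sequence admits a continuous section.

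The main obstacle is to prove that $F = \Gal(M)$ with $M = K^{(\ell)}(\mu_{\ell^\infty})$ is free pro-$\ell$. Since $F$ is pro-$\ell$ (closed in $P$), I would invoke the Tate cohomological criterion: freeness is equivalent to $H^2(F, \mathbb{F}_\ell) = 0$. Because $\mu_\ell \subset M$ with trivial Galois action, this group is identified with $\Br(M)[\ell]$ via Kummer theory, so the task reduces to $\Br(M)[\ell] = 0$. I would deduce this from the Hasse local--global sequence for Brauer groups of number fields, passed to the direct limit over finite subextensions of $M/K$. The archimedean places contribute nothing at odd $\ell$. At a finite place $w$ of $M$ lying over $v$ of $K$, the completion $M_w$ contains $K_v(\mu_{\ell^\infty})$, and hence an infinite $\mathbb{Z}_\ell$-tower over a finite extension of $K_v$ (both for $v \nmid \ell$ and $v \mid \ell$). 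Since on a local Brauer group $\Br(K_v) \cong \mathbb{Q}/\mathbb{Z}$ the transition maps $\Br(E) \to \Br(E')$ are multiplication by $[E':E]$, the presence of this $\ell$-power tower forces $\Br(M_w)[\ell] = 0$, and summing over places yields $\Br(M)[\ell] = 0$.

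For the rank, $F$ is at most countably generated since $M$ is a countable field. For the lower bound, Kummer theory identifies $H^1(F, \mathbb{F}_\ell) \cong M^\times/(M^\times)^\ell$; this quotient is easily seen to have infinite $\mathbb{F}_\ell$-dimension (for instance, infinitely many rational primes remain non-$\ell$-th powers in $M$, giving independent classes). Combined with the freeness above, $F$ is then the free pro-$\ell$ group on countably infinitely many generators, completing the proof of the observation.
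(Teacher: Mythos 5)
Your proposal is correct and takes essentially the same approach as the paper: you identify $Z$ via the cyclotomic character and the structure of $\mathbb{Z}_\ell^\times$ (using $\mu_\ell\subseteq K^{(\ell)}$), obtain the splitting from projectivity of $\mathbb{Z}_\ell$ among pro-$\ell$ groups, and deduce freeness of $F$ by killing the $\ell$-torsion of local Brauer groups along the cyclotomic $\mathbb{Z}_\ell$-tower and feeding this into the Hasse local--global injection. The paper phrases the last step as $\ell$-cohomological dimension one via Albert--Brauer--Hasse--Noether with a reference to Serre, but that is exactly the $\Br(M_w)[\ell]=0$ computation you carry out.
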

Interpretations of splitting maps of \eqref{eq:cyclotomic_decomposition} and of generators of the tame part of $F$ are given in \S \ref{sec:cyc-dec}.

We call \eqref{eq:cyclotomic_decomposition} the \textbf{cyclotomic decomposition}.
To completely understand $\gal(\lsylow{K})$ it therefore remains to determine the action of the cyclic group $Z$ on $F$. We first determine the finite quotients of $F$ as a $Z$-group, and more generally study embedding problems for $F$ which respect the $Z$-action.

As in profinite group theory, in which embedding problems are used to determine profinite groups, we study the $Z$-group $F$ via $Z$-embedding problems.
A \textbf{finite $Z$-embedding problem} for $F$ is a pair of $Z$-epimorphisms $(\alpha\colon F\to \Gamma, \beta\colon G\to \Gamma)$, where $G,\Gamma$ are finite $Z$-groups. A \textbf{proper solution} of $(\alpha,\beta)$ is a lifting of $\beta$ to a $Z$-epimorphism $\gamma\colon F\to G$,  cf.\ \S\ref{sec:Zgrps}.

Analogously to the classical setting, solvability of $Z$-embedding problems is reduced to solvability of  Frattini $Z$-embedding problems and of split $Z$-embedding problems, see Proposition \ref{prop:frattini_split}. Here $(\alpha,\beta)$ is
\textbf{split} if $\beta$ has a section which is a  $Z$-homomorphism.

\begin{theorem}\label{thm:splitep}
Every finite split $Z$-embedding problem for $F$ is properly solvable.
In particular, every finite $\ell$-group $G$ equipped with a $Z$-action is a quotient of $F$ as a $Z$-group.
\end{theorem}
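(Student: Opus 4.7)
The plan is to recast the problem as producing a suitable $Z$-equivariant $1$-cocycle, reduce it to an abelian module-theoretic question via Nakayama's lemma, and finally construct the required cocycle via Kummer theory over $K^{(\ell)}(\mu_{\ell^\infty})$.

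Fix a $Z$-equivariant section $s\colon\Gamma\to G$ of $\beta$, so that $G=N\rtimes\Gamma$ with $N:=\ker\beta$ a $\Gamma\rtimes Z$-submodule. Any continuous lift $\gamma\colon F\to G$ of $\alpha$ has the unique form $\gamma(f)=c(f)\cdot s(\alpha(f))$ for a continuous $1$-cocycle $c\colon F\to N$ (with $F$ acting on $N$ via $\alpha$). The lift $\gamma$ is a $Z$-equivariant epimorphism iff $c$ is $Z$-equivariant and $c(F)$ generates $N$ as a $\Gamma$-module. By Nakayama's lemma, the latter condition reduces to surjectivity of the induced map $\bar c\colon F\to N_\Gamma:=N/I_\Gamma N$, where $I_\Gamma\subset\mathbb{F}_\ell[\Gamma]$ is the augmentation ideal and $\Gamma$ acts trivially on $N_\Gamma$. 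So $\bar c$ is a $Z$-equivariant homomorphism to the finite $\mathbb{F}_\ell[Z]$-module $N_\Gamma$. The theorem therefore reduces to (i) producing a $Z$-equivariant surjection $F\to M$ for every finite $\mathbb{F}_\ell[Z]$-module $M$ (taking $M=N_\Gamma$), and (ii) lifting such a surjection back to a $Z$-equivariant cocycle into $N$.

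Step (ii) is cohomological: cocycle lifts of $\bar c$ exist because $F$ is free pro-$\ell$ (so $H^2(F,I_\Gamma N)=0$), and the flexibility provided by $\mathrm{cd}(Z)=1$ is used to arrange the lift to be $Z$-equivariant. Step (i) is the arithmetic core. Since $\mu_\ell\subset K^{(\ell)}$ (as $[\mathbb{Q}(\mu_\ell):\mathbb{Q}]=\ell-1$ is prime to $\ell$), Kummer theory identifies $F^{\mathrm{ab}}/\ell$ with the Pontryagin dual of $L^\times/(L^\times)^\ell$ as an $\mathbb{F}_\ell[[Z]]$-module, where $L=K^{(\ell)}(\mu_{\ell^\infty})$ and $Z$ acts trivially on $\mu_\ell$. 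Finite cyclic $\mathbb{F}_\ell[Z/\ell^m]$-modules are self-dual, so producing $M$ as a $Z$-quotient of $F^{\mathrm{ab}}/\ell$ amounts to producing it as a $Z$-submodule of $L^\times/(L^\times)^\ell$. Elements of $L^\times$ built from $\zeta_{\ell^{m+1}}$ and rational primes with prescribed Frobenius in $K^{(\ell)}(\mu_{\ell^{m+1}})/K$, chosen by Chebotarev density, yield classes whose $Z$-orbits span the required cyclic $\mathbb{F}_\ell[Z/\ell^m]$-modules; countably many independent such choices capture every finite $M$.

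The main obstacle is this last step: producing enough Kummer classes in $L^\times/(L^\times)^\ell$ whose $\mathbb{F}_\ell[[Z]]$-structure realizes all the required cyclic modules, and which remain linearly independent of any finite list of previously chosen classes. This is the arithmetic heart of the theorem and is handled by Chebotarev density applied to the cyclotomic $\mathbb{Z}_\ell$-tower over $K^{(\ell)}$. The cocycle translation, Nakayama reduction, and cohomological-dimension arguments for the lifting step are standard by comparison.
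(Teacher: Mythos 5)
Your approach is genuinely different from the paper's, which deduces Theorem~\ref{thm:splitep} from the ampleness of $\lSylow{K}$ (Colliot-Th\'el\`ene), Pop's theorem on split embedding problems over function fields of ample fields, and Hilbert's irreducibility theorem, followed by a fiber-product argument (Proposition~\ref{split.prop} and the proof of Theorem~\ref{thm:hilbertep}) to upgrade the solution to a $Z$-equivariant one. Your cocycle-theoretic route is plausible in outline, but it contains a concrete error at the very first reduction: the claim that $\gamma$ is an epimorphism iff $c(F)$ generates $N=\ker\beta$ as a $\Gamma$-module is false. Since $\gamma(f)=c(f)s(\alpha(f))$ and $\gamma(f)\in N$ only when $f\in\ker\alpha$, one has $\im\gamma\cap N=c(\ker\alpha)$, so the correct criterion is $c(\ker\alpha)=N$. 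These are not equivalent: take $F$ free pro-$\ell$, $\Gamma=N=\mathbb{Z}/\ell$ with trivial action, $G=\Gamma\times N$, and $c=\alpha$ under the identification $N=\Gamma$; then $c(F)=N$ but $\im\gamma$ is the diagonal, not $G$. Consequently your Nakayama reduction controls the wrong object --- it must be applied to the $\Gamma$-equivariant homomorphism $c|_{\ker\alpha}\colon\ker\alpha\to N$, not to $c$ on $F$, and the resulting module problem lives over $\ker\alpha$ (a $\Gamma\rtimes Z$-group), which changes the shape of the arithmetic input you need.

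Beyond this, several steps are understated. You implicitly assume $N$ is abelian and even $\ell$-torsion (for the cocycle and $\mathbb{F}_\ell[\Gamma]$-module formalism); the theorem has no such hypothesis, so a d\'evissage through a $\Gamma\rtimes Z$-stable central filtration of $N$ is needed and should be spelled out. The claim that ``the flexibility provided by $\mathrm{cd}(Z)=1$ is used to arrange the lift to be $Z$-equivariant'' is not a proof: $\mathrm{cd}_\ell(Z)=1$ means $H^2(Z,-)=0$, not $H^1(Z,-)=0$, and the obstruction to finding a $Z$-fixed point in the torsor of cocycle lifts naturally lands in an $H^1(Z,\cdot)$-type group, so one must argue more carefully (for instance via $H^2(F\rtimes Z,-)$ and the structure of $\Gal(\lSylow{K})$). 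Finally, the Kummer/Chebotarev construction in your step (i) is precisely where the real arithmetic content lies; it is comparable in difficulty to the paper's Propositions~\ref{local-global.prop}--\ref{finite-Ulm.prop}, which invoke Poitou--Tate and Iwasawa's theorem to control heights, and cannot be dismissed in one sentence. In sum: an interesting alternative strategy, but the surjectivity criterion is wrong as stated, and the lifting and arithmetic steps need substantial work to become a proof.
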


We note that in general Frattini $Z$-embedding problems for $F$ are not solvable. Nevertheless one can reduce such problems to a classical setting over global fields, see Proposition~\ref{Z-Frattini.prop}.

The proof of Theorem~\ref{thm:splitep} is based on the observation of Colliot-Th\'el\`ene that  fields with pro-$\ell$ absolute Galois group are ample \cite[Theorem~5.8.3]{JardenAP}, Pop's theorem on solvability of split embedding problems for function fields over an ample field   \cite[Theorem~5.9.2]{JardenAP}, and Hilbert's irreducibility theorem.

We then apply the resulting tools to make the first step towards determining the $Z$-action on $F$ by  describing the action on generators of $F$ up to elements in  $F^\ell[F,F]$, the first level in the lower $\ell$-central series of $F$.
That is,  we describe the structure of the Frattini quotient $\overline{F}=F/F^\ell[F,F]$ as a $Z$-module by determining its indecomposable direct $Z$-summands.

A $Z$-module $M$ is said to be a direct $Z$-summand of $\oline F$ of multiplicity $\kappa$, if $\oline F\cong M^\kappa\times M'$, where  $M^\kappa$ is the product of $\kappa$ copies of $M$, and $M'$ has no $Z$-summands isomorphic to $M$. Note that since $Z$ acts on the group ring $\FF_\ell[Z/\ell^n Z]$, it also acts on $\FF_\ell[[Z]]=\varprojlim \FF_\ell[Z/\ell^n Z]$.
\begin{theorem}\label{thm:direct-summands}
The indecomposable direct $Z$-summands of $\oline F$ are  $\ff{\ell}[[Z]]$ and \\ $\ff{\ell}[Z/\ell^nZ]$ for $n\in\mathbb{N}\cup\{0\}$. Each of these summands appears with multiplicity $\omega$. 
\end{theorem}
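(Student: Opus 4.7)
The plan is to show that $F$, viewed as a pro-$\ell$ group with a $Z$-action, is \emph{free} on a $Z$-set, and that this $Z$-set has countably many orbits of each type $Z/\ell^n Z$ ($n\geq 0$) as well as countably many regular orbits of $Z$. Granted this, $\oline F$ is simply the associated permutation $\FF_\ell[[Z]]$-module, whose indecomposable summands are exactly $\Ind_{\ell^n Z}^Z\FF_\ell=\FF_\ell[Z/\ell^n Z]$ and $\Ind_{\{1\}}^Z\FF_\ell=\FF_\ell[[Z]]$ (the latter in the pseudocompact sense), each with multiplicity $\omega$. As preliminaries, I would observe that $\oline F$ is a pseudocompact module over the complete DVR $\FF_\ell[[Z]]\cong\FF_\ell[[t]]$, where $t=\sigma-1$ for a topological generator $\sigma$ of $Z$, so by classical structure theory the indecomposable direct summands of $\oline F$ are of the form $\FF_\ell[[t]]$ or $\FF_\ell[[t]]/(t^k)$ for some $k\geq 1$. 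Noting $\FF_\ell[Z/\ell^n Z]=\FF_\ell[[t]]/(t^{\ell^n})$, it remains to show (i) only $k=\ell^n$ occurs and (ii) each type has multiplicity $\omega$.

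For (ii), I would iteratively invoke Theorem~\ref{thm:splitep}: for each $n,r\geq 1$, the finite abelian $Z$-group $\FF_\ell[Z/\ell^n Z]^{\oplus r}$ is a $Z$-quotient of $F$, giving a $Z$-epimorphism $\pi\colon\oline F\twoheadrightarrow\FF_\ell[Z/\ell^n Z]^{\oplus r}$. To upgrade $\pi$ to a direct-summand splitting, Frobenius reciprocity supplies $\Hom_Z(\FF_\ell[Z/\ell^n Z],\oline F)=\oline F^{\ell^n Z}$, so a $Z$-section amounts to choosing $r$ elements of $\oline F^{\ell^n Z}$ whose $\pi$-images are the distinguished generators. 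Such lifts can be constructed by arranging the solution of the split embedding problem to be realized via ``generic'' Kummer elements $\alpha_i\in L_n^\times$, where $L_n=\lsylow K(\mu_{\ell^n})$; these are automatically fixed by $\sigma^{\ell^n}$. Letting $r\to\infty$ yields multiplicity $\omega$; the $\FF_\ell[[Z]]$-summands arise analogously from Kummer generators with trivial $Z$-stabilizer, obtained by passing to inverse limits over $n$.

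For (i), I would pass through Kummer duality. Since $\ell$ is odd and $Z$ is pro-$\ell$, the mod-$\ell$ cyclotomic character $Z\to\FF_\ell^\times$ must be trivial (its image is a pro-$\ell$ subgroup of $\FF_\ell^\times$, a group of order $\ell-1$ coprime to $\ell$), so $\mu_\ell\subset\lsylow K$ and $Z$ acts trivially on $\mu_\ell$. Consequently $\oline F^\vee\cong H^1(F,\FF_\ell)\cong L^\times/(L^\times)^\ell$ as $Z$-modules, where $L=\lsylow K(\mu_{\ell^\infty})$. The key step, and the main obstacle, is to show $L^\times/(L^\times)^\ell$ is itself a permutation $Z$-module: filter it by the images of $L_n^\times$; every class represented by $\alpha\in L_n^\times$ is fixed by $\sigma^{\ell^n}$, so its $Z$-span is a quotient of $\FF_\ell[Z/\ell^n Z]$.

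The crux is that for \emph{generic} $\alpha$---not coming, modulo $\ell$-th powers, from $L_{n-1}$---the $Z$-orbit is $\FF_\ell$-linearly independent in $L^\times/(L^\times)^\ell$, yielding the full $\FF_\ell[Z/\ell^n Z]$ as its span rather than a proper quotient of the form $\FF_\ell[[t]]/(t^k)$ with $k<\ell^n$. I would establish this by a Hilbert irreducibility argument, producing Kummer generators avoiding accidental multiplicative relations, and would split off each such summand inductively by applying Theorem~\ref{thm:splitep} to construct compatible $Z$-equivariant complements, reducing any Frattini-type obstruction through Proposition~\ref{prop:frattini_split}. Dualizing the resulting permutation decomposition of $L^\times/(L^\times)^\ell$ yields the claimed description of $\oline F$.
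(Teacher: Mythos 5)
Your opening premise---that $F$ is free as a pro-$\ell$ group on a $Z$-set, so that $\oline F$ is a permutation $\FF_\ell[[Z]]$-module---is \emph{false}, at least for $K=\mQ$. A permutation module is, by construction, a product of the indecomposables $\FF_\ell[Z/\ell^nZ]$ and $\FF_\ell[[Z]]$. But the paper proves (the discussion following Proposition~\ref{prop:infinite}, using the nontriviality of the infinite Ulm invariants $U_\omega(\hat F)$) that for $K=\mQ$ the module $\oline F$ is \emph{not} $Z$-isomorphic to any product of the modules $\FF_\ell[[Z]]$ and $V_n$. Equivalently, $L^\times/(L^\times)^\ell$ is not a permutation $Z$-module, so the claim you flag as ``the key step, and the main obstacle'' is in fact unprovable. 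Theorem~\ref{thm:direct-summands} only classifies the indecomposable \emph{direct summands} of $\oline F$; it does not say $\oline F$ decomposes into them, and the distinction is precisely what the Ulm-invariant formalism (Proposition~\ref{bounded.prop}) is designed to handle.

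A second, independent gap concerns multiplicity. Theorem~\ref{thm:splitep} produces a $Z$-epimorphism $\oline F\twoheadrightarrow \FF_\ell[Z/\ell^nZ]^{\oplus r}$, but over the DVR $\FF_\ell[[Z]]$ a surjection onto $V_n=\FF_\ell[[Z]]/I^n$ splits only if one can find an element in the fibre of a generator that is annihilated by $I^n$ but not $I^{n-1}$; ``generic Kummer elements'' does not supply such an element, and the orbit of a random unit in $L_n^\times$ modulo $\ell$-th powers will typically span some proper quotient $V_k$, $k<\ell^n$. Finally, your sketch for (i) offers no mechanism ruling out $V_k$ with $k$ not a power of $\ell$ as a summand --- you only claim generic orbits give full copies of $V_{\ell^n}$, which says nothing about what non-generic classes contribute. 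The paper's route is essentially forced here: it characterizes the height of a character via solvability of explicit $Z$-embedding problems (Proposition~\ref{height.prop}), reduces their solvability to local questions via Poitou--Tate (Proposition~\ref{local-global.prop}), and computes the local heights via Iwasawa's local structure theorem (Proposition~\ref{local-height.prop}), landing on the value $\ell^k-1$ for the finite Ulm invariants. That computation is the substantive content and is entirely absent from your outline.
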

In analogy to the works of Demushkin, Serre and Labute, where the relations are determined up to elements in a low level of a filtration and then lifted to the entire group, Theorem \ref{thm:direct-summands} gives relations in a presentation of $\gal(\lsylow{K})$ up to elements in  the first level $F^\ell[F,F]$ of the lower $\ell$-central series of $F$.
Namely, letting $\sigma$ be a generator of $Z$,
each summand $\FF_{\ell}[Z/\ell^k Z]$ gives a subset of generators $x_1, \ldots, x_{\ell^k}$ of $F$ subject only to the relations $\sigma x_{i}\sigma^{-1} = x_ix_{i+1}$ for $i=1, \ldots, \ell^{k}-1$ and $\sigma x_{\ell^k}\sigma^{-1} =x_{\ell^k} y$, for some $y\in F^{\ell}[F,F]$. Similar relations are obtained for each $\FF_\ell[[Z]]$ summand, see Corollary~\ref{generators.cor}.

Our proof of Theorem \ref{thm:direct-summands} is based on the theory of Ulm invariants. In contrast to  the work of Min\'a\v c-Schulz-Swallow  \cite{MS}, \cite{MSS}, this approach also allows dealing with modules over an infinite group such as $Z$, see \S\ref{module.sec}.
Using this approach the proof reduces to determining the solvability of $Z$-embedding problems of elementary abelian $Z$-groups. We achieve the latter by establishing a local global principle using the Poitou-Tate duality theorem, and combining it with results from Iwasawa theory. 

If $K=\mQ$, we also deduce that $\oline F$ is not a direct product of indecomposable modules,  and hence not all generators of $\oline F$ arise from Theorem \ref{thm:direct-summands}. We show that obtaining a full account of the action on the remaining generators is equivalent to determining a certain 
Iwasawa module, cf. \S\ref{infinite-ulm.sec}.


We note that our methods are applicable and hence  also stated in greater generality over global fields and for $\ell=2$ as well. We are hopeful that the combination of our methods with Iwasawa theory and results of Efrat-Min\'a\v c \cite{EM} will shed light on the shape of relations up to higher levels of the lower $\ell$-central series of $F$, and advance us further towards a complete understanding of $\gal(\lsylow{\mQ})$.




\subsection*{Acknowledgments}

We thank Nguy\^e\~n Duy T\^an, Ido Efrat, Dan Haran, David Harbater, Jeffrey Lagarias, Jan Mina\v{c}, James Milne, Kartik Prasanna,
Jack Sonn, and Michael Zieve for helpful discussions, remarks and encouragement. The first author was supported by a Grant from the GIF, the German-Israeli Foundation for Scientific Research and Development.  This material is based upon work supported by the National Science Foundation under Award No. DMS-1303990.

\section{Embedding problems}
\subsection{$Z$-embedding problems}\label{sec:Zgrps}
Let $Z$ be a profinite group. A profinite $Z$-group is a profinite group $H$ together with a continuous $Z$-action. A $Z$-homomorphism $\phi\colon H_1\to H_2$ is a continuous homomorphism that commutes with the $Z$-action. We say that a subgroup $H_1$ of a profinite $Z$-group $H_2$ is a $Z$-subgroup, if the inclusion map $H_1\to H_2$ is a $Z$-homomorphism, that is, if $H_1$ is a closed subgroup that is closed under the action of $Z$.  A $Z$-embedding problem for a $Z$-group $H$, denoted by $(\alpha,\beta)$,  is a diagram
\begin{equation}\label{eq:EP}
\xymatrix{
	&H\ar[d]^{\alpha}\ar@{.>}[dl]_{\gamma}\\
G\ar[r]^{\beta}&\Gamma
}
\end{equation}
in which $G,\Gamma$ are profinite $Z$-groups and $\alpha, \beta$ are $Z$-epimorphisms. If $Z=1$, we recover the usual notion of embedding problems for profinite groups. A \textbf{solution} of the $Z$-embedding problem is a homomorphism $\gamma\colon H\to G$ that commutes the above diagram. A solution is called \textbf{proper} if it is surjective. A $Z$-embedding problem is called \textbf{split}  if $\beta$ has a section which is $Z$-morphism. We define the \textbf{$Z$-Frattini} subgroup $\Phi_Z(G)$ of a $Z$-profinite group $G$ to be the intersection of all maximal $Z$-subgroup. We call a $Z$-embedding problem, as above,  \textbf{Frattini} if $\ker \beta\leq \Phi_Z(G)$.
If $G$ is finite (and hence so is $\Gamma$) we say that the $Z$-embedding problem is \textbf{finite}.
In this work we will be interested in $Z=\mathbb{Z}_\ell$ or $Z=1$.

\begin{lemma}\label{lem:Zbasis}
If $U$ is an open subgroup of a profinite $Z$-group $H$, then $U_Z=\bigcap_{z\in Z} U^z$ is open in $H$.
\end{lemma}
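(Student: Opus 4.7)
The plan is to exploit continuity of the action map $a \colon Z \times H \to H$ together with the fact that an open subgroup of a profinite group is compact (being also closed). Since $a^{-1}(U)$ is open in $Z \times H$ and contains $\{1\}\times U$, around each point $(1,h)$ with $h\in U$ I can choose a basic open neighborhood $V_h \times W_h \subseteq a^{-1}(U)$, so that $w^z \in U$ for every $w \in W_h$ and every $z \in V_h$.

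Next I would use compactness of $U$: extract a finite subcover $W_{h_1},\ldots,W_{h_n}$ of $U$ and set $V = \bigcap_{i=1}^n V_{h_i}$, an open neighborhood of $1 \in Z$. For any $z \in V$ and any $h \in U$, picking $i$ with $h \in W_{h_i}$ gives $h^z \in U$, so $U^z \subseteq U$ for every $z\in V$. This uniform step is the main point of the argument; without the compactness of $U$ one can only control the image of a single element at a time.

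Finally, since $Z$ is profinite, open subgroups form a neighborhood basis of $1$, so I can shrink $V$ to an open subgroup $V' \le Z$ with $V' \subseteq V$. For $z \in V'$ we have both $U^z \subseteq U$ and, since $z^{-1} \in V'$, $U \subseteq U^z$, whence $U^z = U$. Thus $V'$ is contained in the setwise stabilizer $S = \{z \in Z : U^z = U\}$. Because $V'$ is open in the profinite group $Z$ it has finite index, so $S$ has finite index, and hence the orbit $\{U^z : z \in Z\}$ is finite. Consequently $U_Z = \bigcap_{z \in Z} U^z$ is a finite intersection of open subgroups of $H$, and therefore open.
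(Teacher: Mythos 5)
Your proof is correct, and it is genuinely different in organization from the paper's. Both arguments start from the continuity of the action map and exploit compactness, but they diverge at the key step. The paper takes $p^{-1}(U)$ (with $p\colon H\times Z\to H$ the action), uses compactness to write it as a finite union of cosets of a product $H_0\times Z_0$ of open normal subgroups, and then manipulates the intersection defining $U_Z$ directly into a finite intersection indexed by $Z/Z_0$. You instead use compactness of $U$ to produce a uniform open neighborhood $V$ of $1$ in $Z$ with $U^z\subseteq U$ for all $z\in V$, shrink $V$ to an open subgroup $V'$, and observe that $V'$ lies in the setwise stabilizer of $U$ (using $V'=V'^{-1}$ to upgrade $U^z\subseteq U$ to equality); hence the stabilizer is open and of finite index, the $Z$-orbit of $U$ is finite, and $U_Z$ is a finite intersection of the open subgroups $U^z$ (open because each $z\in Z$ acts as a topological automorphism of $H$). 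Your route buys a cleaner structural statement -- the stabilizer of any open subgroup under a continuous $Z$-action is itself open -- which is slightly more than the lemma asks for and arguably easier to follow than the paper's coset computation; the paper's version is a more direct, self-contained calculation that avoids introducing the stabilizer. Both are sound.
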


\begin{proof}
Since the action map $p\colon H\times Z\to H$ is continuous, $p^{-1}(U)$ is open. Thus there exist open normal subgroups $H_0\leq H$ and $Z_0\leq Z$ such that $p^{-1}(U)$ is a finite union of cosets of $H_0\times Z_0$, say $p^{-1}(U) = \bigcup_{i=1}^n  H_0{h_i} \times  Z_0{z_i}$. Thus
\[
U_Z= \bigcap_{z\in Z} U^z = \bigcap_{z\in Z} \bigcup_{i=1}^n (H_0h_i)^{ Z_0z_iz } =
 \bigcap_{z\in Z} \bigcup_{i=1}^n (H_0h_i)^{Z_0z^{z_i^{-1}}z_i }=\bigcap_{x\in Z/Z_0} \bigcup_{i=1}^n (H_0h_i)^{Z_0x^{z_i^{-1}}z_i }.
\]
We conclude that $U_Z$ is open as a finite intersection of open sets.
\end{proof}

Most of the basic theory of embedding problems carries over to $Z$-embedding problems. The proofs are similar to the classical case $Z=1$. For the sake of completeness, we prove the properties we shall need.

\begin{lemma}\label{lem:solutionfrattini}
If $(\alpha\colon H \to \Gamma, \beta\colon G\to \Gamma)$ is a Frattini $Z$-embedding problem and if $\gamma\colon H\to G$ is a solution, then $\gamma$ is proper.
\end{lemma}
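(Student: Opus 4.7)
The plan is to mimic the classical Frattini argument in the $Z$-equivariant setting. I first set $N:=\gamma(H)$. Since $H$ is compact and $\gamma$ is continuous, $N$ is a closed subgroup of $G$, and since $\gamma$ is a $Z$-homomorphism, $N$ is stable under the $Z$-action; thus $N$ is a closed $Z$-subgroup of $G$.

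Next, I exploit the commutativity of the diagram and the surjectivity of $\alpha$ to show that $N$ together with $\ker\beta$ already fills $G$. Explicitly, $\beta(N)=\beta(\gamma(H))=\alpha(H)=\Gamma$, so
\[
N\cdot\ker\beta \;=\; \beta^{-1}(\beta(N)) \;=\; \beta^{-1}(\Gamma) \;=\; G.
\]

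It remains to upgrade the factorization $G=N\cdot\ker\beta$ to the equality $N=G$, using the Frattini hypothesis $\ker\beta\subseteq \Phi_Z(G)$. I argue by contradiction: if $N\subsetneq G$, I produce a maximal proper closed $Z$-subgroup $M\subseteq G$ containing $N$. For finite $G$ this is immediate by taking a maximal element of the (finite) poset of proper closed $Z$-subgroups containing $N$; for general profinite $G$ one invokes Zorn's lemma, with upper bounds obtained by taking topological closures of chains, the existence of a cofinal system of open $Z$-subgroups (via Lemma \ref{lem:Zbasis}) guaranteeing that the closure remains proper. Once such $M$ is in hand, the definition of $\Phi_Z(G)$ as the intersection of all maximal $Z$-subgroups yields $\ker\beta\subseteq \Phi_Z(G)\subseteq M$, and combining with $N\subseteq M$ gives $G=N\cdot\ker\beta\subseteq M$, contradicting $M\subsetneq G$.

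The argument is essentially routine; the one place that requires some care is the existence of a maximal proper closed $Z$-subgroup containing a given proper closed $Z$-subgroup. In the finite setting (which, by the preceding definitions, covers the finite $Z$-embedding problems the paper is primarily concerned with) this is trivial, and no topological subtleties arise.
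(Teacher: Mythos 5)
Your argument is essentially the paper's: set $N=\gamma(H)$, use commutativity and surjectivity of $\alpha$ to get $\beta(N)=\Gamma$, take a maximal $Z$-subgroup of $G$ containing $N$, and derive a contradiction from $\ker\beta\leq\Phi_Z(G)$. The only substantive difference from the paper's write-up is cosmetic ordering (you record $N\cdot\ker\beta=G$ before invoking the maximal subgroup, while the paper picks $V$ maximal first and then observes $G=V\ker\beta$).

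One caveat about your side remark on the existence of a maximal proper closed $Z$-subgroup: the Zorn's-lemma argument you sketch does not work as stated, because the closure of the union of a chain of proper closed subgroups of a profinite group can be all of $G$ (think of an increasing chain of finitely generated closed subgroups whose union is dense). The correct justification is different: given a proper closed $Z$-subgroup $N<G$, choose $g\notin N$, pick an open normal subgroup $U$ with $gU\cap N=\emptyset$, replace $U$ by its $Z$-core $U_Z=\bigcap_{z\in Z}U^z$ (open by Lemma~\ref{lem:Zbasis}); then $NU_Z$ is an open proper $Z$-subgroup containing $N$, and among the finitely many $Z$-subgroups of $G$ containing $NU_Z$ one finds a maximal proper one. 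Since, as you observe, the paper only applies the lemma to finite $Z$-embedding problems, this issue does not affect the result, but the Zorn step as written would not survive scrutiny in the general profinite setting.
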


\begin{proof}
Let $U=\gamma(H)$. If $U\neq G$, then there is a maximal $Z$-subgroup $V$ of $G$ that contains $U$. So
\[
\Gamma = \alpha(H)= \beta(\gamma(H)) = \beta(U)\leq \beta(V).
\]
By the third isomorphism theorem this implies that $G=V\ker\beta$. Since $(\alpha,\beta)$ is Frattini, $\ker \beta\leq \Phi_Z(G)\leq V$. So $G = V \ker \beta\leq V\Phi_Z(G)\leq V\neq G$. This contradiction implies that $U=G$, as needed.
\end{proof}

The following lemma reduces the study of solvability of embedding problems to the study of Frattini and split embedding problems.
\begin{proposition}\label{prop:frattini_split}
Consider a  $Z$-embedding problem $\mathcal{E} = (\alpha\colon H\to \Gamma, \beta\colon G\to \Gamma)$ for a $Z$-profinite group $H$. Then there exists an open $Z$-subgroup $U$ of $G$ such that $\beta(U)=\Gamma$ and the following properties are satisfied:
\begin{enumerate}
\item \label{lem:frattini_split1} The $Z$-embedding problem $\mathcal{E}_U=(\alpha \colon H\to  \Gamma, \beta|_{U}\colon U\to \Gamma)$ is Frattini.
\item \label{lem:frattini_split2} A solution $\alpha'\colon H\to U$ of $\mathcal{E}_U$ induces a split $Z$-embedding problem $\mathcal{E}'=(\alpha'\colon H\to U, \beta' \colon \ker \beta\rtimes U\to U)$, where $U$ acts on $\ker \beta$ by conjugation in~$G$.
\item \label{lem:frattini_split3} A proper solution $\gamma'\colon H\to \ker\beta \rtimes U$ of $\mathcal{E}'$ induces a proper solution $\gamma\colon H\to~G$ of $\mathcal{E}$ by: $\gamma'(h) =(\sigma,u)$ implies $\gamma(h) = \sigma u$.
\end{enumerate}
\end{proposition}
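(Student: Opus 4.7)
The plan is to construct $U$ by a minimality argument and then verify the three claims more or less mechanically. Set $N=\ker\beta$, a normal $Z$-subgroup of $G$, and let $\mathcal S$ be the family of open $Z$-subgroups $V\le G$ with $\beta(V)=\Gamma$, equivalently $VN=G$. This family is nonempty (it contains $G$), so in the finite case a minimal $U\in\mathcal S$ exists by descent on order; in general profinite $G$, one invokes Zorn together with the compactness observation that an intersection of a descending chain in $\mathcal S$ still surjects onto $\Gamma$, reducing via Lemma~\ref{lem:Zbasis} to the finite-quotient situation. The Frattini property of $\mathcal E_U$ in part (a) will then follow from the minimality of $U$: if some maximal $Z$-subgroup $W$ of $U$ did not contain $U\cap N$, then the $Z$-subgroup of $U$ generated by $W$ and $U\cap N$ would strictly contain $W$, so by maximality would equal $U$; applying $\beta|_U$ and using $\beta(U\cap N)=1$ would give $\Gamma=\beta(U)=\beta(W)$. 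Since $W$ is open in $U$ (and hence in $G$), this would place $W$ in $\mathcal S$ with $W\subsetneq U$, contradicting minimality.

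For part (b), since the $Z$-action on $G$ consists of group automorphisms, it preserves the conjugation action of $U$ on $N$ (explicitly, $z\cdot(unu^{-1})=(z\cdot u)(z\cdot n)(z\cdot u)^{-1}$). Consequently $N\rtimes U$ is naturally a profinite $Z$-group under the diagonal action $z\cdot(n,u)=(z\cdot n,\,z\cdot u)$, the projection $\beta'\colon N\rtimes U\to U$ is a $Z$-epimorphism, and the map $u\mapsto(1,u)$ is a $Z$-equivariant section. Hence $\mathcal E'=(\alpha',\beta')$ is a split $Z$-embedding problem.

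For part (c), given a proper solution $\gamma'\colon H\to N\rtimes U$, set $\gamma(h)=nu$ whenever $\gamma'(h)=(n,u)$. The identity $(u_1n_2u_1^{-1})u_1=u_1n_2$ in $G$ turns the semidirect-product multiplication $(n_1,u_1)(n_2,u_2)=(n_1(u_1n_2u_1^{-1}),\,u_1u_2)$ into ordinary multiplication in $G$, so $\gamma$ is a homomorphism; $Z$-equivariance follows from $z\cdot(nu)=(z\cdot n)(z\cdot u)$. The relation $\beta\circ\gamma=\beta|_U\circ\alpha'=\alpha$ shows that $\gamma$ solves $\mathcal E$, and $\gamma$ is surjective since $\gamma'(H)=N\rtimes U$ already contains every element $(n,1)$, giving $N\subseteq\gamma(H)$, and projects onto $U$, giving $U\subseteq\gamma(H)$, so $\gamma(H)\supseteq NU=G$.

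The main obstacle is part (a): finding the right open $Z$-subgroup $U$ and extracting the Frattini property from its minimality, with some compactness care needed in the profinite case. Once $U$ is in place, parts (b) and (c) reduce to careful but essentially routine verifications inside the semidirect product.
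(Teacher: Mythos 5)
Your argument follows the paper's proof essentially step for step: choose $U$ minimal among open $Z$-subgroups of $G$ mapping onto $\Gamma$, read off the Frattini property of $\mathcal E_U$ from minimality, and verify parts (b) and (c) by direct computation in the semidirect product (your $\gamma$ is exactly the paper's $\pi\circ\gamma'$). The one place you diverge is the reduction to the finite case. The paper disposes of it in one line (``a limit argument reduces the proof to finite $Z$-embedding problems'') and thereafter works with finite groups, where openness of $W$ and descent in $\mathcal S$ are trivial. Your Zorn-plus-compactness route is shakier: an intersection of a descending chain of open $Z$-subgroups in $\mathcal S$ is closed but in general not open (think of $\bigcap_n p^n\mathbb{Z}_p$), so Zorn over $\mathcal S$ does not directly produce a minimal \emph{open} $U$; and in part (a) you assert without justification that a maximal $Z$-subgroup $W$ of $U$ is open, which holds for finite groups but not for general profinite ones (and indeed minimality of $U$ in $\mathcal S$ only rules out \emph{open} proper $Z$-subgroups surjecting onto $\Gamma$). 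Neither issue matters for how the proposition is used in the paper, since only finite embedding problems occur downstream, but the cleaner fix is to reduce to the finite case first, as the paper does, rather than run Zorn directly on the profinite object.
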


\begin{proof}
A limit argument reduces the proof to finite $Z$-embedding problems.

Let $U$ be minimal among the open $Z$-subgroups of $G$ that map onto $\Gamma$. In particular $\beta(U)=\Gamma$. Since no proper $Z$-subgroup of $U$ maps onto $\Gamma$, we have that $\ker (\beta|_U)$ is contained in each of the maximal $Z$-subgroups of $U$, hence $\ker(\beta|_U)$ is contained in $\Phi_Z(U)$. This proves \eqref{lem:frattini_split1}.

If $\alpha'$ is a solution of $\mathcal{E}_U$, then it is proper by Lemma~\ref{lem:solutionfrattini}. To prove  \eqref{lem:frattini_split2}, it suffices to observe that $\ker\beta\rtimes U$ is a profinite $Z$-group with respect to the action $(\sigma,u)^z= (\sigma^z,u^z)$ and that the projection map $\beta'\colon \ker\beta\rtimes U\to U$ is a $Z$-map.

Let $\pi\colon \ker \beta\rtimes U \to G$ defined by $\pi(\sigma ,u)=\sigma u$. It is a $Z$-epimorphism that commutes in the diagram of  $Z$-maps
\[
\xymatrix{
&&H\ar[dd]^{\alpha}\ar[dl]^{\alpha'}\ar@{.>}[dll]_{\gamma'}\\
\ker\beta \rtimes U\ar[r]^-{\beta'}\ar[dr]^{\pi} &U\ar@{_(->}[d]\ar[dr]|{\beta|_U}\\
&G\ar[r]_{\beta}&\Gamma.
}
\]
Thus if $\gamma'$ is a proper solution of $\mathcal{E}'$, then $\gamma$ is a proper solution of $\mathcal{E}$, as needed for~\eqref{lem:frattini_split3}.
\end{proof}

\begin{lemma}\label{definition.lem}
Let $H_1$ be a $Z$-subgroup of a profinite $Z$-group $H$ and let  $\alpha_1\colon H_1\to \Gamma$ be a $Z$-epimorphism on a finite $Z$-group $\Gamma$.
Then  there exists an open $Z$-subgroup $H_2$ of $H$ that contains $H_1$ and an extension $\alpha_2\colon H_2\to \Gamma$ of $\alpha_1$.

In particular any finite $Z$-embedding problem for $H_1$ is the restriction of a corresponding $Z$-embedding problem for an open $Z$-subgroup of $H$ that contains $H_1$.
\end{lemma}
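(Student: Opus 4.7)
The plan is to imitate the classical construction of extending a continuous homomorphism from a closed subgroup to an open neighborhood, while preserving $Z$-equivariance at every step with the aid of Lemma \ref{lem:Zbasis}.

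First I would note that since $\Gamma$ is finite and $\alpha_1$ is continuous, $\ker\alpha_1$ is open in $H_1$. Because $H$ admits a neighborhood basis of the identity consisting of open normal subgroups, the subspace topology on $H_1$ provides an open normal subgroup $N_0\trianglelefteq H$ with $N_0\cap H_1\subseteq \ker\alpha_1$. To make this choice $Z$-stable, I would set $N := \bigcap_{z\in Z} N_0^z$. By Lemma \ref{lem:Zbasis} $N$ is open in $H$, it is $Z$-invariant by construction, it remains normal in $H$ as an intersection of normal subgroups, and it still satisfies $N\cap H_1\subseteq \ker\alpha_1$. This is the one place where the $Z$-equivariant setup differs from the classical case, and it is the step I expect to be the only real subtlety in the argument.

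Next I would take $H_2 := H_1 \cdot N$. Since $N$ is normal in $H$, this is a subgroup; since $N$ is open, so is $H_2$; and since both $H_1$ and $N$ are $Z$-invariant, so is $H_2$. Clearly $H_1 \subseteq H_2$. I would then define $\alpha_2\colon H_2 \to \Gamma$ by
\[
\alpha_2(h\,n) := \alpha_1(h), \qquad h\in H_1,\ n\in N.
\]
Well-definedness follows from $N\cap H_1\subseteq \ker\alpha_1$: if $h_1n_1 = h_2n_2$ then $h_2^{-1}h_1 = n_2n_1^{-1}\in H_1\cap N\subseteq \ker\alpha_1$, so $\alpha_1(h_1)=\alpha_1(h_2)$. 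That $\alpha_2$ is a homomorphism uses normality of $N$ in $H_2$; continuity is immediate because $\alpha_2$ factors through the quotient $H_2/N$, which is finite; surjectivity is inherited from $\alpha_1$; and $Z$-equivariance follows because $h^z\in H_1$ and $n^z\in N$ and $\alpha_1$ itself is a $Z$-homomorphism.

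For the final clause, a finite $Z$-embedding problem $(\alpha_1\colon H_1\to\Gamma,\ \beta\colon G\to\Gamma)$ for $H_1$ is then lifted to the $Z$-embedding problem $(\alpha_2\colon H_2\to\Gamma,\ \beta\colon G\to\Gamma)$ for the open $Z$-subgroup $H_2$ of $H$, whose restriction along $H_1\hookrightarrow H_2$ recovers the original one. This completes the reduction claimed in the lemma.
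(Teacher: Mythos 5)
Your proof is correct and follows essentially the same route as the paper's: pick an open normal subgroup of $H$ whose intersection with $H_1$ lies in $\ker\alpha_1$, make it $Z$-invariant by intersecting its $Z$-conjugates using Lemma~\ref{lem:Zbasis}, form $H_2 = H_1 N$, and extend $\alpha_1$ by projecting away the $N$-part. Your verification of $Z$-equivariance is slightly more explicit than the paper's, but no genuinely new idea is involved.
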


\begin{proof}
The subgroup $U_1=\ker\alpha_1$ is a normal open $Z$-subgroup of $H_1$. Then there exists an open normal subgroup $U_2$ of $H$ such that $U_2\cap H_1\leq U_1$. By Lemma~\ref{lem:Zbasis} we may replace $U_2$ by $\bigcap_{z\in Z} U_2^z$ to assume that $U_2$ is a $Z$-subgroup.

Let $H_2=U_2H_1$. Then $H_2$ is an open $Z$-subgroup of $H$ that contains $H_1$. Let $\alpha_2\colon H_2\to \Gamma$ be defined by $\alpha_2(u\sigma)=\alpha_1(\sigma)$ for all $u\in U_2$ and $\sigma \in H_1$.
Then $\alpha_2$ is well defined because it is trivial on $U_2\cap H_1\leq U_1$ and it is a $Z$-map because its kernel $U_2$ is an open normal $Z$-subgroup. By definition $\alpha_2|_{H_1}=\alpha_1$, hence the assertion.
\end{proof}

We shall need the following two basic lemmas concerning Sylow subgroups of profinite groups:
\begin{lemma}\label{restriction.lem}
Let $\ell$ be a prime number, $\Lambda$  an $\ell$-Sylow subgroup of $G$, and $\alpha\colon G\to H$ an epimorphism of profinite groups. Assume that  $H$ is pro-$\ell$. Then $\alpha(\Lambda) = H$.
\end{lemma}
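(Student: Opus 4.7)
The plan is to reduce to finite quotients of $H$ and apply the compatibility of profinite $\ell$-Sylow subgroups with epimorphisms. First I observe that $\alpha(\Lambda)$ is a closed subgroup of $H$, being the continuous image of the compact group $\Lambda$. Since the open normal subgroups of $H$ form a neighborhood base at $1$, one has $\alpha(\Lambda) = \bigcap_N \alpha(\Lambda)\cdot N$, where $N$ ranges over the open normal subgroups of $H$. Thus it suffices to show $\alpha(\Lambda)\cdot N = H$ for every such $N$.

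Fix an open normal subgroup $N$ of $H$. Since $H$ is pro-$\ell$, the finite quotient $H/N$ is an $\ell$-group. The preimage $M := \alpha^{-1}(N)$ is open and normal in $G$, and $\alpha$ descends to an isomorphism $G/M \cong H/N$. Composing with the projection $G \to G/M$ gives an epimorphism of $G$ onto a finite $\ell$-group. Now invoke the standard fact from profinite Sylow theory (\cite[\S 2.3]{RZ}): the image of an $\ell$-Sylow subgroup of $G$ in any finite quotient of $G$ is an $\ell$-Sylow subgroup of that quotient. Applied to $\Lambda$ and $H/N$, this gives an $\ell$-Sylow subgroup of the finite $\ell$-group $H/N$; but $H/N$ is its own (unique) $\ell$-Sylow, so the image of $\Lambda$ is all of $H/N$. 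Translating back through $\alpha$, this says $\alpha(\Lambda)\cdot N = H$, as required.

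The only substantive ingredient is the statement that an $\ell$-Sylow subgroup of a profinite group maps onto an $\ell$-Sylow subgroup in any finite quotient. This is immediate from the characterization of profinite $\ell$-Sylow subgroups as inverse limits of $\ell$-Sylow subgroups of finite quotients, so I do not anticipate any real obstacle; the result is essentially bookkeeping on top of the basic profinite Sylow theory cited in the introduction.
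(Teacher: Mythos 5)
Your proof is correct but packages the argument differently from the paper. The paper works directly with supernatural indices: by the isomorphism theorems for profinite groups, $[H:\alpha(\Lambda)] = [G:\Lambda\ker\alpha]$; the left-hand side is a supernatural power of $\ell$ since $H$ is pro-$\ell$, while the right-hand side divides $[G:\Lambda]$ and hence is prime to $\ell$, forcing the index to be $1$. You instead reduce to finite quotients via the standard identity $\alpha(\Lambda) = \bigcap_N \alpha(\Lambda)N$ over open normal $N\trianglelefteq H$, and then invoke the cited fact that the image of an $\ell$-Sylow subgroup in a finite quotient is again an $\ell$-Sylow subgroup, which is all of $H/N$ since $H/N$ is an $\ell$-group. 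Both routes are standard; the paper's is more compact once one is fluent with supernatural index arithmetic, whereas yours stays at the level of finite groups and may be more transparent to a reader unfamiliar with that formalism. Worth noting that the Sylow-image-in-quotient fact you cite is itself typically established by essentially the same index computation the paper uses, so the two arguments are close cousins under the hood, but your presentation is a genuinely distinct and perfectly valid way to organize it.
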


\begin{proof}
The notation $[A:B]$ denotes the index of a subgroup $B$ of a profinite group as a supernatural number, cf.\ \cite[\S 22.8]{FJ}. By the isomorphism theorems for profinite groups one has
\[
 [H:\alpha(\Lambda)] = [G:\Lambda \ker \alpha] .
\]
Since $H$ is pro-$\ell$ the left hand side is a (supernatural) power of $\ell$. Since $\Lambda$ is an $\ell$-Sylow subgroup, the right hand side, which divides $[G:\Lambda]$, is prime to $\ell$. Hence $[H:\alpha(\Lambda)]=1$, as needed.
\end{proof}

\begin{lemma}\label{subgroup.lem}
Let $\ell$ be a prime number and $H$ a normal subgroup of a profinite group $G$. Assume $[G:H]$ is prime to $\ell$. Then $H$ contains all $\ell$-Sylow subgroups of~$G$.
\end{lemma}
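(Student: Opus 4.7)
The plan is to fix an arbitrary $\ell$-Sylow subgroup $P$ of $G$ and show $P\leq H$ by analyzing the image $PH/H$ inside $G/H$. Since $H$ is normal, $PH$ is a closed subgroup of $G$ and $PH/H$ is a well-defined closed subgroup of $G/H$.

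First I would observe that $PH/H$ is isomorphic (via the second isomorphism theorem for profinite groups) to $P/(P\cap H)$, hence is a continuous quotient of the pro-$\ell$ group $P$. Therefore $PH/H$ is itself pro-$\ell$, meaning its order (as a supernatural number) is a power of $\ell$. On the other hand, $PH/H$ is a closed subgroup of $G/H$, so its order divides $[G:H]$, which by hypothesis is prime to $\ell$.

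The only supernatural number that is simultaneously a power of $\ell$ and prime to $\ell$ is $1$, so $PH/H$ is trivial, i.e., $PH=H$, which forces $P\leq H$. Since $P$ was an arbitrary $\ell$-Sylow subgroup of $G$, the conclusion follows.

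I do not anticipate any real obstacle: the argument is the standard profinite analogue of the elementary finite-group fact, and the only point requiring a mild citation is the divisibility statement for supernatural indices (e.g.\ \cite[\S 22.8]{FJ}), which has already been invoked in the proof of Lemma~\ref{restriction.lem}.
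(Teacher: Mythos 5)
Your argument is correct, but it takes a genuinely different route from the paper's. The paper's proof starts from an $\ell$-Sylow subgroup $\Lambda$ of $H$, observes that $[G:\Lambda]=[G:H][H:\Lambda]$ is prime to $\ell$ so that $\Lambda$ is in fact an $\ell$-Sylow subgroup of $G$, notes that normality of $H$ forces every conjugate $\Lambda^\sigma$ to lie in $H$, and then invokes the conjugacy of $\ell$-Sylow subgroups to conclude. Your proof instead fixes an arbitrary $\ell$-Sylow subgroup $P$ of $G$ and shows directly that $PH/H$ is both pro-$\ell$ (as a continuous quotient of $P$, via the second isomorphism theorem) and of index in $G/H$ dividing the $\ell$-prime number $[G:H]$, forcing $PH=H$. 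Your route is somewhat leaner: it bypasses the conjugacy theorem for profinite Sylow subgroups entirely and uses only the second isomorphism theorem plus the elementary arithmetic of supernatural orders, at the small cost of needing the (standard) observation that $PH$ is closed because $P$ is compact and $H$ is closed. Either proof is perfectly acceptable; the paper's is perhaps more in the spirit of ``transplanting the finite-group proof verbatim,'' whereas yours is closer to a direct index computation.
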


\begin{proof}
Let $\Lambda$ be an $\ell$-Sylow subgroup of $H$. Then $[G:\Lambda]=[G:H][H:\Lambda]$ is prime to $\ell$ and so $\Lambda$ is an $\ell$-Sylow subgroup of $G$. Since $H$ is normal,  also $\Lambda^\sigma \leq H$ for all $\sigma\in G$. By the Sylow theorem every $\ell$-Sylow subgroup of $G$ is of the form $\Lambda^\sigma$, hence the assertion.
\end{proof}

%

Next we deal with restriction of embedding problems from Sylow subgroups.

\begin{lemma}\label{lem:res_solutions}
Let $\ell$ be a prime number, $H$ a profinite group, $\Lambda$ an $\ell$-Sylow subgroup, and $\mathcal{E}_\ell = (\alpha\colon \Lambda \to \Gamma, \beta\colon G\to \Gamma)$ a finite embedding problem with $G$ an $\ell$-group. Let $\mathcal{U}$ be the family of pairs $(U,\alpha_U)$ where $U$ is an open subgroup of $H$ containing~$\Lambda$ and $\alpha_U\colon U\to G$ extends $\alpha$.
\begin{enumerate}
\item \label{cond:res}
If there exists $(U,\alpha_U)\in \mathcal{U}$ such that $\mathcal{E}_U=(\alpha_U \colon U \to \Gamma, \beta \colon G\to \Gamma)$ has a solution $\gamma_U\colon U\to G$, then $\gamma=(\gamma_U)|_{\Lambda}$ is a solution of $\mathcal{E}$. Moreover if $\gamma_U$ is proper, then $\gamma$ is proper.
\item If $\ker \alpha$ is abelian and if $\mathcal{E}$ is solvable, then $\mathcal{E}_U$ is solvable.
\end{enumerate}
\end{lemma}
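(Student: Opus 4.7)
For part (a), I would first observe that since $\Lambda \le U \le H$ and the supernatural index $[H:\Lambda]$ is prime to $\ell$, the index $[U:\Lambda]$ is prime to $\ell$ as well, so $\Lambda$ is an $\ell$-Sylow subgroup of $U$. Given a solution $\gamma_U \colon U \to G$ of $\mathcal{E}_U$, its restriction $\gamma := \gamma_U|_{\Lambda}$ satisfies $\beta \gamma = (\beta \gamma_U)|_{\Lambda} = \alpha_U|_{\Lambda} = \alpha$, hence is a solution of $\mathcal{E}$. If $\gamma_U$ is proper then, since $G$ is pro-$\ell$, Lemma~\ref{restriction.lem} applied to $\gamma_U$ and the $\ell$-Sylow subgroup $\Lambda$ of $U$ gives $\gamma_U(\Lambda) = G$, so $\gamma$ is proper as well.

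For part (b), my plan is a cohomological pullback argument. Fix $(U,\alpha_U) \in \mathcal{U}$ and let $\gamma \colon \Lambda \to G$ be a solution of $\mathcal{E}$. Since any solution of the embedding problem obtained by replacing $G$ with $G' := \gamma(\Lambda)$ composes with the inclusion $G' \hookrightarrow G$ to yield a solution of $\mathcal{E}_U$, I may assume $\gamma$ is surjective. Then $N := \ker \beta = \gamma(\ker \alpha)$ is abelian. Forming the pullback $P := U \times_{\Gamma} G$, we obtain a short exact sequence $1 \to N \to P \to U \to 1$ of profinite groups on which $U$ acts continuously on $N$ by conjugation, and a solution $\gamma_U$ of $\mathcal{E}_U$ corresponds to a continuous section $s \colon U \to P$ via $\gamma_U := \pr_G \circ s$. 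The given $\gamma$ provides such a section over $\Lambda$, so the extension class $c \in \HLG^2(U,N)$ in continuous cohomology satisfies $\res^U_\Lambda(c) = 0$.

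To deduce $c = 0$, I would descend to a finite quotient: choose an open normal subgroup $U_1 \trianglelefteq U$ such that $c$ is inflated from some $\bar c \in \HLG^2(U/U_1, N)$ and such that the image of $\bar c$ in $\HLG^2(\Lambda/(\Lambda \cap U_1), N)$ already vanishes. (Both conditions can be arranged simultaneously from the colimit description of continuous cohomology, using that $\res^U_\Lambda c = 0$ and that the $U$-action on the finite module $N$ factors through a finite quotient.) Setting $\bar U := U/U_1$ and $\bar \Lambda := \Lambda U_1/U_1$, the index $[\bar U:\bar \Lambda]$ divides the prime-to-$\ell$ supernatural number $[U:\Lambda]$, while $\HLG^2(\bar U, N)$ is $\ell$-torsion. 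The identity $\cores^{\bar U}_{\bar\Lambda} \circ \res^{\bar U}_{\bar\Lambda} = [\bar U:\bar \Lambda] \cdot \id$ then forces $\bar c = 0$, hence $c = 0$, and a resulting splitting of $P \to U$ produces the required $\gamma_U$.

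The step I expect to be most delicate is the simultaneous choice of $U_1$: one must arrange both that $c$ inflates from $\HLG^2(U/U_1, N)$ and that the restriction of $\bar c$ to $\bar\Lambda$ is zero at that finite level, which requires some bookkeeping with the colimit description of continuous cohomology and with the standard fact that any open subgroup of $\Lambda$ is the intersection with $\Lambda$ of an open subgroup of $U$. Once this is arranged, the remainder of the argument is the familiar corestriction-restriction trick together with routine manipulations with the pullback.
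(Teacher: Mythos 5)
Your argument is correct and lands in essentially the same place as the paper's, but you re-derive the machinery the paper cites as black boxes. The paper's proof of part (b) invokes Hoechsmann's theorem \cite[Proposition 9.4.2]{NSW}: solvability of $(\alpha,\beta)$ is equivalent to the vanishing of $\alpha^*(b)$, where $b\in H^2(\Gamma,A)$ is the class of $1\to A\to G\to\Gamma\to 1$ and $\alpha^*$ is inflation; then $0=\alpha^*(b)=i^*\alpha_U^*(b)$, and since $[U:\Lambda]$ is prime to $\ell$ while $A$ is an $\ell$-group, $i^*$ is injective, whence $\alpha_U^*(b)=0$ and one applies Hoechsmann again. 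Your pullback $P=U\times_\Gamma G$ with class $c\in H^2(U,N)$ is exactly $\alpha_U^*(b)$, so your construction is an unwinding of Hoechsmann rather than a distinct route; likewise your finite-quotient-plus-cor-res argument is one way of proving the standard injectivity of restriction to a subgroup of prime-to-$\ell$ index on $\ell$-torsion cohomology, which the paper simply cites. The colimit bookkeeping you flag as delicate is real but can be avoided: in the profinite setting $\res^U_\Lambda(c)=0$ means $\res^U_V(c)$ dies for some \emph{open} $V\supseteq\Lambda$, and then $\cor^U_V\res^U_V=[U:V]\cdot\mathrm{id}$ already kills $c$ without ever passing to a finite quotient. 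One further remark in your favour: the lemma as printed says ``$\ker\alpha$ abelian'' where the intended hypothesis (as the reference to the extension $1\to A\to G\to\Gamma\to 1$ and the application in Proposition~\ref{Z-Frattini.prop} show) is that $\ker\beta$ is abelian; your reduction to $\gamma$ surjective, which converts ``$\ker\alpha$ abelian'' into ``$\ker\beta$ abelian'' via $\ker\beta=\gamma(\ker\alpha)$, quietly repairs this discrepancy, which the paper's proof glosses over.
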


\begin{proof}
The first assertion of \eqref{cond:res}, that $\gamma$ is a solution of $\mathcal{E}$, is trivial. The second assertion of \eqref{cond:res} follows from Lemma~\ref{restriction.lem}.

Now we assume that $A=\ker \alpha$ is abelian and that $\mathcal{E}$ is solvable. Denote by $b$ the class in $H^2(\Gamma,A)$ that corresponds to the group extension
\[
\xymatrix@1{1\ar[r] & A\ar[r] &G\ar[r]^{\beta} & \Gamma \ar[r] & 1}
\]
and write $\alpha^*\colon H^2(\Gamma,A)\to H^2(\Lambda , A)$ for the inflation map. Then by Hoechsmann's theorem  \cite[Proposition 9.4.2]{NSW}, $\alpha^*(b)=0$.

Let $(U,\alpha_U)\in \mathcal{U}$ and let $i\colon \Lambda \to U$ be the inclusion map. Then $0=\alpha^*(b) = (\alpha_U\circ i)^*(b) = i^* \circ \alpha_U^*(b) $. Since $|A|$ is a power of $\ell$ and since $[U:\Lambda] \mid [H:\Lambda]$,  hence prime to $\ell$, it follows that $i^*$ is injective. So $\alpha_U^*(b) =0$ and consequently $\mathcal{E}_U$ is solvable by \cite[Proposition 9.4.2]{NSW}.
\end{proof}

We shall also need the following technical lemma:
\begin{lemma}\label{splitting.lem} Let $G$ be a profinite group, let $N$ and  $P$ be closed subgroups, and put $F=N\cap P$. Assume that $N\lhd G$, $G=NP$, and $P=F\rtimes Z$, for some $Z\leq P$. Then $G=N\rtimes Z$.
\end{lemma}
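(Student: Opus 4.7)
The plan is to verify the two defining conditions for the internal semidirect product $G=N\rtimes Z$: namely, that $G=NZ$ and that $N\cap Z=1$. Since $N$ is already assumed to be normal in $G$, these two facts together yield the assertion.

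First I would show that $G=NZ$. By hypothesis $G=NP$, and since $P=F\rtimes Z$ we have in particular $P=FZ$, so $G=NFZ$. But $F=N\cap P\subseteq N$, hence $NF=N$, and therefore $G=NZ$.

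Next I would verify that $N\cap Z=1$. Any element of $N\cap Z$ lies in $N$ and also in $Z\leq P$, so it lies in $N\cap P=F$. Thus $N\cap Z\subseteq F\cap Z$, and the latter is trivial because $P=F\rtimes Z$ is an internal semidirect product. Combining this with $G=NZ$ and $N\lhd G$ yields $G=N\rtimes Z$.

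I do not anticipate any serious obstacle: the argument is a direct manipulation of the given identities $G=NP$, $P=FZ$, $F\subseteq N$, and $F\cap Z=1$. The only point worth flagging is the (trivial) observation that $F\subseteq N$, which is what allows the absorption $NF=N$ needed to replace $P$ by $Z$ inside $G$.
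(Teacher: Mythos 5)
Your proof is correct and is essentially identical to the paper's: both verify $NZ=NFZ=NP=G$ (using $F\subseteq N$) and $N\cap Z=N\cap P\cap Z=F\cap Z=1$, then invoke normality of $N$. No meaningful difference in approach.
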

\begin{proof} Since
$ N\cap Z= N\cap P\cap Z=F\cap Z=1$ and $NZ= NFZ= NP = G$, we get the assertion.
\end{proof}

\section{The cyclotomic decomposition}
\label{sec:cyc-dec}

\subsection{Proof of Observation \ref{thm:obs}}
The following is a more general form of Observation \ref{thm:obs}.
\begin{observation}\label{semidirect2.thm} Let $K$ be a global field and $\ell\neq \chr(K)$ a prime. If $\ell=2$ and $K$ is a number field, assume further that $K\cap \mQ(\mu_{\ell^\infty})$ is (totally) imaginary.
Then $\Gal(\lSylow{K})\cong F \rtimes Z$, where $Z=\Gal(\lSylow{K}(\mu_{\ell^\infty})/\lSylow{K}) \cong \mathbb{Z}_\ell$ and $F=\Gal(\lSylow{K}(\mu_{\ell^\infty}))$ is  a free pro-$\ell$ group on countably many generators.
\end{observation}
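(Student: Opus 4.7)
The plan is to establish, in order, the three claims: (i) $Z \cong \mZ_\ell$; (ii) $F$ is free pro-$\ell$ on countably many generators; and (iii) the sequence $1 \to F \to \Gal(\lSylow{K}) \to Z \to 1$ splits. Write $L := \lSylow{K}$ and $M := L(\mu_{\ell^\infty})$ throughout.

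For (i), view the cyclotomic character $\chi \colon \Gal(L) \twoheadrightarrow Z \hookrightarrow \mZ_\ell^\times$. The image $Z$ is pro-$\ell$ because $\Gal(L)$ is. For $\ell$ odd, the pro-$\ell$ part of $\mZ_\ell^\times = \mu_{\ell-1} \times (1 + \ell\mZ_\ell)$ is $1 + \ell\mZ_\ell \cong \mZ_\ell$, which is torsion-free, so $Z$ embeds there. For $\ell = 2$, the totally imaginary hypothesis on $K \cap \mQ(\mu_{2^\infty})$ forces $K$ itself to be totally imaginary (any embedding $K \hookrightarrow \mathbb{C}$ restricts to one of the subfield), hence by Artin--Schreier $\Gal(K)$ contains no element of order $2$; therefore $Z$ is again torsion-free and lies in $1 + 4\mZ_2 \cong \mZ_2$. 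To see $Z$ is non-trivial, consider the cyclotomic $\mZ_\ell$-extension $K^{\mathrm{cyc}}/K$ inside $K(\mu_{\ell^\infty})$: the degree $[L \cap K^{\mathrm{cyc}}\colon K]$ simultaneously divides the prime-to-$\ell$ supernatural $[L\colon K]$ and the $\ell$-power $[K^{\mathrm{cyc}}\colon K]$, forcing it to equal $1$. Hence $\Gal(LK^{\mathrm{cyc}}/L) \cong \mZ_\ell$ is a quotient of $Z$, and being a torsion-free closed subgroup of $\mZ_\ell$ with a $\mZ_\ell$-quotient we conclude $Z \cong \mZ_\ell$.

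For (ii), $F$ is pro-$\ell$ as a closed subgroup of the pro-$\ell$ group $\Gal(L)$, so freeness reduces to $\mathrm{cd}_\ell(F) \le 1$, equivalently $H^2(F, \mathbb{F}_\ell) = 0$. Since $\mu_\ell \subseteq M$, Kummer theory identifies this with $Br(M)[\ell]$, so the task is to show $Br(M)[\ell] = 0$. Any class $b \in Br(M)[\ell]$ descends to $b_0 \in Br(M_0)[\ell]$ for a finite global subfield $M_0 \subseteq M$, and by Albert--Brauer--Hasse--Noether $b_0|_M = 0$ iff $[M_w\colon (M_0)_v]\cdot \inv_v(b_0) = 0$ at every place $v$ of $M_0$ and every $w \mid v$ in $M$. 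At a finite place of residue characteristic $p \ne \ell$, Frobenius acts on $\mu_{\ell^\infty}$ via the integer $q_v \geq 2$, which is not a root of unity in $\mZ_\ell^\times$ and hence has infinite $\mZ_\ell$-order; at a finite place of residue characteristic $\ell$, the wildly ramified local cyclotomic tower contributes a local $\mZ_\ell$ directly. In both cases $[M_w\colon (M_0)_v]$ is divisible by every power of $\ell$ and annihilates the $\ell$-power invariant. At archimedean places there is no $\ell$-torsion to kill: for function fields and number fields with $\ell$ odd this is automatic, while for $\ell = 2$ the totally imaginary assumption propagates from $K$ through $L$ to $M$, leaving no real places. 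Countable rank follows from $H^1(F, \mathbb{F}_\ell) \cong M^\times/(M^\times)^\ell$ being countable (as $M$ is countable) and visibly infinite from independent Kummer classes at distinct primes.

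For (iii), $Z \cong \mZ_\ell$ is itself a free pro-$\ell$ group of rank one and thus projective in the category of pro-$\ell$ groups. Since $\Gal(L)$ is pro-$\ell$, the surjection $\Gal(L) \twoheadrightarrow Z$ admits a continuous section, and Lemma \ref{splitting.lem} assembles this into the semidirect decomposition $\Gal(L) = F \rtimes Z$. I expect the main obstacle to lie in step (ii), specifically in verifying that the cyclotomic $\mZ_\ell$-tower alone produces enough local $\ell$-adic divisibility at every place of every finite subfield $M_0 \subseteq M$ to annihilate every $\ell$-torsion Brauer invariant; this requires the residue-characteristic case split and the careful archimedean analysis when $\ell = 2$.
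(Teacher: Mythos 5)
Your overall strategy coincides with the paper's: use the cyclotomic character to identify $Z$ inside $\mZ_\ell^\times$, show $F$ is free by bounding the $\ell$-cohomological dimension via Albert--Brauer--Hasse--Noether and the fact that the local degrees in $\lSylow{K}(\mu_{\ell^\infty})/\mQ_p$ are divisible by $\ell^\infty$, and then split the sequence by projectivity of $\mZ_\ell$ among pro-$\ell$ groups. Your steps (ii) and (iii) are fine, if a bit more hands-on than the paper's one-line appeal to \cite[Chp.~II \S3.3 Prop.~9]{Ser}.

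There is, however, a genuine gap in your step (i) for $\ell = 2$. You argue: the totally-imaginary hypothesis makes $K$ totally imaginary, so Artin--Schreier gives that $\Gal(K)$ is torsion-free, \emph{therefore} $Z$ is torsion-free. This last inference is invalid: $Z$ is a \emph{quotient} of $\Gal(\lSylow{K}) \le \Gal(K)$, not a subgroup, and torsion-freeness does not pass to quotients (e.g. $\mZ_2 \twoheadrightarrow \mZ/2$). If you try to rescue it by first invoking the splitting, you get a circularity, since the splitting in step (iii) relies on already knowing $Z \cong \mZ_\ell$. The correct argument — and the one the paper implicitly uses — is direct: the cyclotomic character identifies $Z$ with $\Gal(\mQ(\mu_{2^\infty})/\lSylow{K}\cap\mQ(\mu_{2^\infty}))$. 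Since $\lSylow{K}\cap\mQ(\mu_{2^\infty}) \supseteq K\cap\mQ(\mu_{2^\infty})$ is totally imaginary, this Galois group does not contain complex conjugation, which is the unique element of order $2$ in $\mZ_2^\times$; hence $Z$ is torsion-free, and being a nontrivial closed torsion-free subgroup of $\mZ_2^\times \cong \mZ/2 \times \mZ_2$ forces $Z \cong \mZ_2$. Note also that your parenthetical claim that $Z$ lies inside $1+4\mZ_2$ is not justified and is in general false (e.g. with $E = \mQ(\sqrt{-2})$, the group $\Gal(\mQ(\mu_{2^\infty})/E)$ is generated by $3 \notin 1+4\mZ_2$ yet is $\cong \mZ_2$); fortunately the conclusion $Z\cong\mZ_2$ does not need it.
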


\begin{proof}
Since $\mu_\ell\subseteq \lSylow{K}$ by Lemma \ref{subgroup.lem}, and since $\lsylow{K}\cap \mQ(\mu_{\ell^\infty})$ is (totally) imaginary if $K$ is a number field and $\ell=2$, one has $\Gal(\mQ(\mu_{\ell^\infty})/\lSylow{K}\cap \mQ(\mu_{\ell^\infty}))\cong \mathbb{Z}_\ell$. Thus,
 $Z\cong \mathbb{Z}_\ell$ as a nontrivial subgroup.
 The restriction map gives rise to a short exact sequence
\begin{equation}\label{short.equ}
\xymatrix@1{
1\ar[r]& F \ar[r]& \Gal(\lSylow{K}) \ar[r]^-\alpha&Z  \ar[r]& 1.}
\end{equation}
Since $\mathbb{Z}_\ell$ is projective in the category of pro-$\ell$ groups, (\ref{short.equ}) splits  and its splitting gives an isomorphism $\gal(\lSylow{K})\cong F\rtimes Z.$

Let $L=\lsylow{K}(\mu_{\ell^\infty})$. Since $L$ is totally imaginary and $[L_\fp:\mQ_p]$ is divisible by $\ell^\infty$ as a supernatural number for every rational prime $p$ and a prime $\fp$ of $L$ lying over $p$, the local Galois groups $\gal(L_\fp)$ has $\ell$-th cohomological dimension $1$ for every prime $\fp$ of $L$.
 The Albert-Brauer-Hasse-Noether theorem then shows that $F=\gal(L)$ has $\ell$-th cohomological dimension $1$, see \cite[Chp.~II \S3.3 Proposition 9]{Ser}. Thus,  $F$ is free pro-$\ell$ \cite[Chp.~I \S4 Corollary 2]{Ser}.
\end{proof}
\subsection{Existence of splitting maps} 
For $\ell=2$, if $K$ has a real prime, then the sequence
\[
\xymatrix@1{
1\ar[r]& \Gal(K^{(2)}(\mu_{2^\infty})) \ar[r]& \Gal(K^{(2)}) \ar[r]^-\alpha&\Gal(K^{(2)}(\mu_{2^\infty})/K^{(2)})  \ar[r]& 1
}
\]
does not split.
Otherwise, there is an embedding of  $$\Gal(K^{(2)}(\mu_{2^\infty})/K^{(2)}) \cong \Gal(K(\mu_{2^\infty})/K)\cong \ZZ_2\times \ZZ/2\ZZ$$ into $\Gal(K)$. But this is impossible since the normalizer of an involution $\tau$ in an absolute Galois group is exactly $\left<\tau\right>$, cf.\ \cite[Proposition~19.4.3(b)]{Ido}.

\begin{corollary}\label{2.cor}
Let $K$ be a number field equipped with a real prime. 
Then $$\gal(K^{(2)})\cong (F\rtimes \mathbb{Z}_2)\rtimes \mathbb{Z}/2,$$
where $F$ is a free pro-$2$ group on countably many generators.
\end{corollary}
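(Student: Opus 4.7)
The plan is to reduce to Observation \ref{semidirect2.thm} via the totally imaginary quadratic extension $K'=K(i)$, and then recover the extra $\mathbb{Z}/2$ factor by producing a complex conjugation inside a carefully chosen $2$-Sylow. First I would verify the hypothesis of Observation \ref{semidirect2.thm} for $K'$: since $i\in K'\cap\mQ(\mu_{2^\infty})$ and $i$ squares to $-1$, every embedding of $K'\cap\mQ(\mu_{2^\infty})$ into $\mathbb{C}$ has non-real image, so $K'\cap\mQ(\mu_{2^\infty})$ is totally imaginary. The observation then yields $\gal(K'^{(2)})\cong F\rtimes\mathbb{Z}_2$ with $F$ free pro-$2$ on countably many generators.

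Next I would compare a $2$-Sylow of $\gal(K)$ with one of $\gal(K')$. Fix a real prime of $K$ and let $\tau\in\gal(K)$ be the corresponding complex conjugation, so $\langle\tau\rangle\cong\mathbb{Z}/2$. Since $\langle\tau\rangle$ is pro-$2$, it is contained in some $2$-Sylow of $\gal(K)$, and by conjugating we may choose a $2$-Sylow $P$ containing $\tau$. Set $H=\gal(K')$, which is normal of index $2$ in $\gal(K)$. The multiplicative formula $[\gal(K):P\cap H]=[\gal(K):H]\cdot[H:P\cap H]=[\gal(K):P]\cdot[P:P\cap H]$, combined with the facts that $[\gal(K):P]$ is prime to $2$ and $[P:P\cap H]$ is a power of $2$, forces $[H:P\cap H]$ to be prime to $2$. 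Hence $P':=P\cap H$ is a $2$-Sylow of $H$, so $\gal(K'^{(2)})\cong P'\cong F\rtimes\mathbb{Z}_2$ by the observation. Moreover $[K(i):K]=2$ while every finite subextension of $K^{(2)}/K$ has odd degree, so $K(i)\not\subseteq K^{(2)}$, i.e.\ $P\not\subseteq H$; hence $[P:P']=2$ and $P'\triangleleft P$ with $P/P'\cong\mathbb{Z}/2$.

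Finally, the splitting is witnessed by $\tau$ itself: since $\tau(i)=-i$, one has $\tau\notin H$, hence $\tau\notin P'$. Thus $\tau$ projects to the non-trivial element of $P/P'$ and $\langle\tau\rangle\cap P'=1$, giving $P=P'\rtimes\langle\tau\rangle\cong(F\rtimes\mathbb{Z}_2)\rtimes\mathbb{Z}/2$, as required. The only mildly delicate point is the coordinated choice of $P$, which must simultaneously contain the prescribed involution $\tau$ and intersect $\gal(K(i))$ in a $2$-Sylow to which Observation \ref{semidirect2.thm} applies; once this bookkeeping is in place, the index count and the construction of the section via $\tau$ are routine.
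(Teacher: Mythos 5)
Your proof is correct and follows essentially the same approach as the paper: apply Observation~\ref{semidirect2.thm} to $K(\sqrt{-1})$ to identify $\Gal(K^{(2)}(\sqrt{-1}))$ with $F\rtimes\mathbb{Z}_2$, then split off $\mathbb{Z}/2$ via a complex conjugation $\tau$ at a real place. The bookkeeping you make explicit---choosing the $2$-Sylow $P$ to contain $\tau$ and checking that $P\cap\Gal(K(\sqrt{-1}))$ is a $2$-Sylow of $\Gal(K(\sqrt{-1}))$ to which the observation applies---is left implicit in the paper's terser proof.
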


\begin{proof} By Observation \ref{semidirect2.thm}, we have $\gal(K^{(2)}(\sqrt{-1}))\cong F \rtimes \mathbb{Z}_2$. Since $K$ has a real place, there is an embedding of $\oline{K}$ into $\mathbb{C}$ such that the complex conjugation $\tau$ fixes $K$. Thus, the restriction of $\tau$ to $\oline{K}$ is an involution which restricts to the nontrivial automorphism of $K^{(2)}(\sqrt{-1})/K^{(2)}$. This gives a splitting of the extension
\[
\xymatrix@1{
1\ar[r]&F\rtimes \mathbb{Z}_2 \ar[r]& \Gal(K^{(2)})\ar[r]& \Gal(K^{(2)}(\sqrt{-1})/K^{(2)})\ar[r]& 1,
}
 \]
 proving the desired result.
\end{proof}

If $K$ is totally imaginary but $K\cap \mQ(\mu_{2^\infty})$ is totally real, 
by Artin's theorem $\gal(K^{(2)})$ has no involutions and hence even the sequence   
$$ 1\ra F\rtimes \mathbb{Z}_2 \ra \Gal(K^{(2)})\ra \Gal(K^{(2)}(\sqrt{-1})/K^{(2)})\ra 1, $$
does not split. 
\subsection{Henselian splitting maps} 
We also note that a splitting $s:Z\ra \Gal(\lsylow{K})$ of \eqref{short.equ} can be chosen so that $s(Z)$ is generated by a lift of the Frobenius automorphism at any prime $\fp$ of $K$ such that $N(\fp)\not\equiv 1 \Mod \ell^{s+1}$, where $\ell^s$ is the number of $\ell$-power roots of unity in~$K(\mu_\ell)$. Indeed, letting $\frak{P}$ be a prime of $\tilde K$ dividing $\fp$, the condition on $N(\fp)$ forces $\frak{P}$ to be inert in $K(\mu_{\ell^{s+1}})/K(\mu_{\ell^s})$, and hence in $K(\mu_{\ell^\infty})/K(\mu_{\ell^s})$ and in $\lsylow{K}(\mu_{\ell^\infty})/\lsylow{K}$. Let $\sigma$ be any lift of the Frobenius of $\frak{P}$ in $\lsylow{K}(\mu_{\ell^\infty})/\lsylow{K}$ to $\tilde K$.  As $\frak{P}$ is inert in $\lsylow{K}(\mu_{\ell^\infty})/\lsylow{K}$, the restriction of $\sigma$ to $\lsylow{K}(\mu_{\ell^\infty})$ generates $Z$ and hence induces a splitting $s$ of~\eqref{short.equ}.

\subsection{Generators of the tame part of $F$} 

Let $L=\lsylow{K}(\mu_{\ell^\infty})$, $P$ (resp. $T$) the set of primes of $L$ (resp. primes of $L$ lying either over $\infty$ or $\ell$), and let $L_{T}$ be the maximal extension of $L$ unramified away from $T$. The number theoretical analogue of Riemann's existence theorem \cite[Corollary 10.5.2]{NSW} gives a canonical set of generators of $\gal(L_T)$. Namely, it shows that $\gal(L_T)$ decomposes as the free product of its local Galois groups:
$$ \Gal(L_T)\cong \mathop{\ast}_{\fp\in P\setminus T} \gal(L_\fp). $$
Here, $\ast$ denotes the free pro-$\ell$ product over the profinite index space associated to $P\setminus T$, see \cite[\S 10.1]{NSW}. Note that $\gal(L_\fp)$ is the inertia group, and hence is cyclic for every prime $\fp\not\in T$.  We also note that the abelinization of the remaining part $\gal(L_T/L)$  can be studied using Iwasawa theory. 

\section{The action via $Z$-embedding problems}
In this section we  study the action in the cyclotomic decomposition via $Z$-embedding problems. We consider the following more general setup. Let  $K$ be a Hilbertian  field and $\ell \neq \chr{K}$ a prime number. If $\ell=2$ and $\chr K=0$,  assume that $\sqrt{-1}\in K$. As before set $L=\lSylow{K}(\mu_{\ell^\infty})$, $Z=\gal(L/\lSylow{K})$, and $F=\gal(L)$.  Theorem \ref{thm:splitep} is then a special case of:
\begin{theorem}\label{thm:hilbertep}  Every finite split $Z$-embedding problem for $F$ is properly solvable.
\end{theorem}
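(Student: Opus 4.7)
The plan is to lift the finite split $Z$-embedding problem for $F$ to an ordinary finite split embedding problem over the pro-$\ell$ group $\Gal(K^{(\ell)})=F\rtimes Z$, descend to a Hilbertian finite extension of $K$, and solve the resulting problem by combining Colliot-Th\'el\`ene's ampleness observation, Pop's theorem for split embedding problems over function fields of ample fields, and Hilbert's irreducibility.

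Write the given split $Z$-embedding problem as $(\alpha\colon F\to\Gamma,\beta\colon G\to\Gamma)$ with $Z$-section $s$. Since $\Gamma$ and $G$ are finite, the continuous $Z$-action on them factors through $Z_N:=Z/\ell^NZ$ for some $N$, so $\widehat\Gamma:=\Gamma\rtimes Z_N$ and $\widehat G:=G\rtimes Z_N$ are finite $\ell$-groups and $s$ induces a splitting $\widehat s\colon\widehat\Gamma\to\widehat G$. The $Z$-equivariance of $\alpha$ makes the map
\[
\widehat\alpha\colon \Gal(K^{(\ell)})=F\rtimes Z\to\widehat\Gamma,\qquad (f,z)\mapsto(\alpha(f),\,z\bmod\ell^N),
\]
a homomorphism, yielding a finite split embedding problem $(\widehat\alpha,\widehat\beta)$ over $\Gal(K^{(\ell)})$. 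A proper solution $\widehat\gamma$ whose restriction to the canonical $Z$-subgroup of $\Gal(K^{(\ell)})$ agrees with the canonical inclusion $Z\to Z_N\hookrightarrow\widehat G$ will restrict to a proper $Z$-equivariant solution $\gamma:=\widehat\gamma|_F$ of $(\alpha,\beta)$, as one checks by direct calculation or via Lemma~\ref{splitting.lem}.

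To construct $\widehat\gamma$, first apply Lemma~\ref{definition.lem} (with trivial $Z$-structure on $\Gal(K)$) to extend $\widehat\alpha$ to an epimorphism $\phi\colon\Gal(K_0)\to\widehat\Gamma$ for some finite separable extension $K_0/K$; since $K$ is Hilbertian, so is $K_0$. Pulling $(\phi,\widehat\beta)$ back to $\Gal(K^{(\ell)}(t))$ produces a finite split embedding problem over this function field. Since $\Gal(K^{(\ell)})$ is pro-$\ell$, Colliot-Th\'el\`ene's theorem makes $K^{(\ell)}$ ample, so Pop's theorem yields a proper solution $\eta\colon\Gal(K^{(\ell)}(t))\to\widehat G$. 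Taking the Galois closure over $K_0(t)$ of the extension determined by $\eta$ and specializing $t$ to a suitable $t_0\in K_0$ via Hilbert's irreducibility over $K_0$ preserves the Galois group; restricting the specialized extension to $\Gal(K^{(\ell)})$ produces the desired proper solution $\widehat\gamma$.

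The main technical obstacle is arranging that $\widehat\gamma$ restricts correctly on the $Z$-subgroup: a priori $\widehat\gamma(Z)$ may differ from the canonical $Z_N\subset\widehat G$ by a $1$-cocycle $u\colon Z\to\ker\widehat\beta=\ker\beta$. The split structure is crucial here: conjugation of $\widehat\gamma$ by an element of $\ker\widehat\beta$ modifies $u$ by a coboundary, while the section $\widehat s$ together with the freedom to choose $t_0$ within a Hilbertian set lets us realise $\widehat\gamma$ with $u$ trivial. Restriction to $F$ then delivers the $Z$-equivariant proper solution $\gamma$ of the original $Z$-embedding problem.
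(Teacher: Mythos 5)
Your overall strategy is the same as the paper's: inflate the $Z$-embedding problem to an ordinary split embedding problem over $\Gal(\lSylow{K})=F\rtimes Z$ by truncating $Z$ to a finite quotient $Z/\ell^rZ$, pass to a Hilbertian field, and solve by combining ampleness of $\lSylow{K}$, Pop's theorem, and Hilbert irreducibility. That much is correct, and in effect you are re-deriving Proposition~\ref{split.prop} inline. The divergence, and the gap, lies entirely in how you handle the passage from a proper solution of the inflated problem back to a $Z$-equivariant proper solution on $F$.

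You correctly identify that a proper solution $\widehat\gamma\colon\Gal(\lSylow{K})\to G\rtimes Z/\ell^rZ$ restricts to a $Z$-map on $F$ only if $\widehat\gamma|_Z$ is the canonical projection, equivalently only if a certain nonabelian $1$-cocycle $u\colon Z\to\ker\beta$ is trivial. But the claim that ``the section $\widehat s$ together with the freedom to choose $t_0$ within a Hilbertian set lets us realise $\widehat\gamma$ with $u$ trivial'' is not an argument: neither Pop's theorem nor Hilbert irreducibility gives any a~priori control over the image of $Z$ under the specialized representation, and conjugation only moves $u$ within its cohomology class, which need not be trivial. This is the step that needs a genuine idea, and your proposal does not supply one.

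The paper avoids this cocycle manipulation entirely. Instead of trying to force $\psi''|_Z$ into a specific form, it lifts the proper solution $\psi''\colon\Gal(\lSylow{K})\to G\rtimes (Z/\ell^rZ)$ to $\psi'\colon\Gal(\lSylow{K})\to G\rtimes Z$ via the fiber product $G\rtimes Z\cong (G\rtimes (Z/\ell^rZ))\times_{\Gamma\rtimes (Z/\ell^rZ)}(\Gamma\rtimes Z)$, and then has to establish that $\psi'$ is still \emph{surjective}. The split hypothesis is used precisely here, and in a structural (not cohomological) way: the key observation is that $\ker\psi''\ker\phi'=\ker\phi''$, which is proved by noting that if it failed then $\pi''$ would factor through the non-split projection $\Gamma\rtimes Z/\ell^{r+1}Z\to\Gamma\rtimes Z/\ell^rZ$, contradicting the splitting of $\pi''$. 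This count forces $[\ker\phi'\colon\ker\psi']=[G\colon\Gamma]$ and hence surjectivity of $\psi'$, after which $\psi'(F)=G$ follows from the identity action on the $Z$-coordinates. Your proposal has no analogue of this non-splitting lemma; it is the missing ingredient.
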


To prove the theorem we first deal with split embedding problems for $\Gal(\lSylow{K})$:

\begin{proposition}\label{split.prop} Let $(\phi\colon\gal(\lSylow{K})\ra\Gamma, \pi\colon G\ra \Gamma)$ be a finite split embedding problem for $\gal(\lSylow{K})$ with $G$ an $\ell$-group. Then $(\phi,\pi)$ is properly solvable.
\end{proposition}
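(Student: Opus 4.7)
The plan is to combine three ingredients highlighted in the introduction: Colliot-Th\'el\`ene's observation that $\lSylow{K}$ is ample (its absolute Galois group being pro-$\ell$), Pop's theorem on split embedding problems over function fields of ample fields, and Hilbert's irreducibility theorem applied to a Hilbertian intermediate field. The fields involved play complementary roles: $\lSylow{K}$ is ample but not Hilbertian, whereas any finite subextension $K\subseteq K_0\subseteq \lSylow{K}$ is Hilbertian but not ample.

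First I would use Lemma~\ref{definition.lem} (with $Z=1$) to extend $\phi$ to an epimorphism $\alpha_U\colon \gal(K_0)\to \Gamma$ for some finite intermediate field $K\subseteq K_0\subseteq \lSylow{K}$, producing a finite split embedding problem $(\alpha_U,\pi)$ over $\gal(K_0)$. Because $\gal(\lSylow{K})$ is an $\ell$-Sylow of $\gal(K_0)$ (the index $[\lSylow{K}:K_0]$ divides $[\lSylow{K}:K]$ and so is prime to $\ell$) and $G$ is pro-$\ell$, Lemma~\ref{lem:res_solutions}(a) reduces the original problem to properly solving $(\alpha_U,\pi)$; moreover $K_0$ is Hilbertian as a finite extension of $K$.

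Next I would pull $(\alpha_U,\pi)$ back along $\gal(\lSylow{K}(t))\to \gal(\lSylow{K})\hookrightarrow \gal(K_0)$ to a split embedding problem over $\lSylow{K}(t)$. Since $\lSylow{K}$ is ample, Pop's theorem (Theorem~5.9.2 in \cite{JardenAP}) yields a proper solution coming from a $\lSylow{K}$-regular Galois extension $\hat L/\lSylow{K}(t)$ of group $G$. By spreading out, $\hat L$ descends to a $K_1$-regular Galois $G$-extension $L/K_1(t)$ for some finite intermediate field $K_0\subseteq K_1\subseteq \lSylow{K}$ (enlarging $K_1$ if needed so that the field of constants of $L$---which lies in $\hat L\cap\tilde K=\lSylow{K}$ by regularity of $\hat L$---equals $K_1$), and the embedding-problem structure descends compatibly so that the $\Gamma$-subextension of $L$ realizes $\alpha_U|_{\gal(K_1)}$.

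Finally, since $K_1$ is Hilbertian, Hilbert's irreducibility theorem produces a specialization $t\mapsto t_0\in K_1$ at which $L|_{t=t_0}/K_1$ remains Galois of group $G$, yielding a proper solution $\gamma''\colon\gal(K_1)\to G$ of $(\alpha_U|_{\gal(K_1)},\pi)$. A second application of Lemma~\ref{lem:res_solutions}(a) to the pair $(K_1,\alpha_U|_{\gal(K_1)})$ then gives the desired proper solution $\gamma=\gamma''|_{\gal(\lSylow{K})}$ of $(\phi,\pi)$. The main obstacle is precisely the interplay between ampleness and Hilbertianity: neither $\lSylow{K}$ nor the finite subextensions $K_i$ enjoy both properties, and the two-stage descent---solving first over $\lSylow{K}(t)$ via Pop and then descending to $K_1(t)$ before specializing via HIT---is the device that reconciles them.
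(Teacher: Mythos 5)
Your proposal follows essentially the same route as the paper: ampleness of $\lSylow{K}$ plus Pop's theorem over $\lSylow{K}(t)$, descent to a finite Hilbertian subextension $K_1$ of $\lSylow{K}/K$, Hilbert's irreducibility theorem to specialize, and restriction back to the Sylow subgroup via Lemma~\ref{restriction.lem}. One small slip: the $G$-extension $\hat L/\lSylow{K}(t)$ that Pop's theorem produces is not $\lSylow{K}$-regular when $\Gamma\neq 1$ --- its constants form the $\Gamma$-extension $N$ of $\lSylow{K}$ cut out by $\ker\phi$, so ``$\hat L\cap\tilde K=\lSylow{K}$'' should read ``$\hat L\cap\tilde K=N$'' --- but this does not undermine the argument since you already track the $\Gamma$-subextension through the descent.
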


\begin{proof}
Let $N$ be the fixed field of $\ker\phi$, and so $N/\lSylow{K}$ is Galois and the map $\phi$ decomposes as $\phi = \phi'\circ r$, where $r\colon \Gal(\lSylow{K})\to \Gal(N/\lSylow{K})$ is the restriction map and $\phi'\colon \Gal(N/\lSylow{K})\to \Gamma$ is an isomorphism. We may replace $\Gamma$ by $\Gal(N/\lSylow{K})$ and the maps $\pi$, $\phi$ by $(\phi')^{-1}\circ \pi$ and $r$, respectively, to assume that $\Gamma=\Gal(N/\lSylow{K})$ and $\phi$ is the restriction map.

By \cite[Theorem~5.8.3]{JardenAP} $\lSylow{K}$ is ample. Hence by \cite[Theorem~5.9.2]{JardenAP} there exist a Galois extension $F/\lSylow{K}(x)$  such that $\Gal(F/\lSylow{K}(x)) \cong G$, $N$ is the algebraic closure of $K$ in $F$, and the restriction map $\Gal(F/\lSylow{K}(x)) \to \Gal(N(x)/\lsylow{K}(x))$ coincides with $\pi$ (after identifying $\Gal(F/\lSylow{K}(x)) = G$,  $\Gal(N(x)/\lSylow{K}(x))=\Gamma$).

Let $K_0$ be a finite subextension of $\lSylow{K}(x)/K$ to which the above descends to as follows: there exist $N_0/K_0$ Galois with Galois group $\Gamma$ such that $N=N_0\lSylow{K}$ and $F_0/K_0(x)$ Galois with group $G$ such that $F=F_0\lSylow{K}$, $N_0$ is the algebraic closure of $K_0$ in $F_0$, $G=\Gal(F_0/K_0(x))$ and the restriction map $\Gal(F_0/K(x))\to \Gal(N_0/K_0)$ coincides with $\pi$.

Note that $K_0$ is Hilbertian as a finite extension of $K$ \cite[Proposition 16.11.1]{FJ}.
Hence there exists $a\in K_0$ such that the prime $(x-a)$ of $K_0(x)$ is inert in $F_0$. Let $M$ be the residue field of $F_0$ at $x=a$. Then $M/K_0$ is Galois with Galois group $G$, $N_0\subseteq M$, and the restriction map $\Gal(M/K_0)\to \Gal(N_0/K_0)$ coincides with $\pi$. In other words, if $\phi_0\colon \Gal(K_0) \to \Gal(M/K_0)=G$ and $\psi\colon \Gal(K_0)\to \Gal(M/K_0)$ are the restriction maps, then $\psi$ is a proper solution of $(\phi_0,\pi)$. Then $\psi|_{\Gal(\lSylow{K})}$ is a solution of $(\phi,\pi)$ which is proper by Lemma~\ref{restriction.lem}.
\end{proof}
\begin{proof}[Proof of Theorem~\ref{thm:hilbertep}]
Let $(\phi\colon F\to G,\pi\colon G\to \Gamma)$ be a finite split $Z$-embedding problem with $G$ an $\ell$-group. Since $\Gal(\lSylow{K}) = F\rtimes Z$, we may extend $(\phi,\pi)$  to a split embedding problem
\[
(\phi'\colon F\rtimes Z\ra \Gamma\rtimes Z,\pi'\colon G\rtimes Z \ra \Gamma\rtimes Z )
\]
for $\Gal(\lSylow{K})$, where $\phi' (x,z) = (\phi(x),z)$ and $\pi'(g,z) = (\pi(g),z)$, for every $x\in F$, $z\in Z$, and $g\in G$.

Since $Z$ acts on the finite group $G$ continuously, the kernel of the action is an open subgroup of $Z$, so it contains  $\ell^r Z$, for some $r\geq 1$.
Composing with the natural projection $Z\to Z/\ell^r Z$ we obtain a finite embedding problem
\[
(\phi''\colon F \rtimes Z\ra \Gamma\rtimes (Z/\ell^rZ),\pi''\colon G\rtimes (Z/\ell^rZ)\ra \Gamma\rtimes (Z/\ell^rZ))
\]
for $\lSylow{K}$ and we have the commutative diagram of profinite groups
\begin{equation}
\xymatrix{
 & F \rtimes Z \ar_{\phi'}[d] \ar@/^20pt/[dd]^{\phi''} \\
 G\rtimes Z \ar^{\pi'}[r] \ar[d] &  \Gamma\rtimes Z \ar[d] \\
 G\rtimes (Z/\ell^rZ) \ar^{\pi''}[r] &  \Gamma\rtimes (Z/\ell^rZ).
 }
\end{equation}
By Proposition~\ref{split.prop}, there exists a proper solution $\psi''$ of $(\phi'', \pi'')$. 
Note that as $\ker\phi''=\ell^r\ker\phi'$, we have $\ker\psi''\ker\phi'=\ell^k\ker\phi'$ for some $k\geq r$. We claim that $k=r$ and hence \begin{equation}\label{span.equ}\ker\psi''\ker\phi'=\ker\phi''.\end{equation}
Indeed, if $k>r$, we have $\ker\psi''\ker\phi'\subseteq \ell^{r+1}Z\ker\phi'$ and hence $\pi''$ factors through the natural projection $\Gamma \rtimes Z/\ell^{r+1}Z \ra \Gamma\rtimes Z/\ell^rZ$. The latter does not split, contradicting the splitting of $\pi''$, and proving the claim.

Since $G\rtimes Z$ is the fiber product of $\Gamma\rtimes Z$ and $G\rtimes (Z/\ell^r Z)$ over $\Gamma\rtimes (Z/\ell^r Z)$, we obtain a solution $\psi'=\psi'' \times_{\phi''} \phi'$  of $(\phi',\pi')$.
We next show that $\psi'$ is proper. We have $\ker\psi'=\ker\psi''\cap \ker\phi'$. Hence \eqref{span.equ} gives:
$$\ker\phi'/\ker\psi' = \ker\phi'/(\ker\psi''\cap \ker\phi')\cong (\ker\phi'\ker\psi'')/\ker\psi''=\ker\phi''/\ker\psi''.$$
Thus, $[\ker\phi':\ker\psi']=[\ker\phi'':\ker\psi'']=[G\colon\Gamma]$, showing that $\psi'$ is surjective.  Since $\pi'$ and $\phi'$ are the identity maps on $Z$, $\psi'(F) = \im\psi' \cap G$. As $\psi'$ is proper, we get $\psi'(F)=G$. Thus,  the restriction of $\psi'$ to $F$ is a proper solution of the $Z$-embedding problem $(\phi,\pi)$.
\end{proof}

As oppose to split embedding problems, Frattini $Z$-embedding problems need not be solvable. We now descend these problems to cyclotomic extensions of number fields.

For a number field $K(\mu_\ell)\subseteq K'\subseteq \lSylow{K}$, Lemma \ref{splitting.lem} applied
with $N=\gal(K'(\mu_{\ell^\infty}))$ and $P=\gal(\lsylow{K})$ shows that the splitting $\gal(\lSylow{K})=\gal(L)\rtimes Z$ induces a splitting $\Gal(K')=\gal(K'(\mu_{\ell^\infty}))\rtimes Z$ such that the restriction $\Gal(L)\ra \gal(K'(\mu_{\ell^\infty}))$ is a $Z$-homomorphism.

\begin{proposition}\label{Z-Frattini.prop} Let $(\phi:\gal(L)\ra \Gamma,\pi)$  be a $Z$-embedding problem. Then there is a number field $K(\mu_\ell)\subseteq K'\subseteq \lSylow{K}$ and a $Z$-embedding problem $$(\phi':\gal(K'(\mu_{\ell^\infty}))\ra \Gamma,\pi)$$ whose restriction to $L$ is  $(\phi,\pi)$. If furthermore $\ker\pi$ is abelian, then for every such $K'$ and $\phi'$, $(\phi,\pi)$ is  solvable if and only if $(\phi',\pi)$ is  solvable.
In particular, if $\pi$ is $Z$-Frattini,  $(\phi,\pi)$ is properly solvable if and only if $(\phi',\pi)$ is properly solvable.
\end{proposition}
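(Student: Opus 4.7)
My plan proceeds in three parts: (i) construct the descent $(K',\phi')$ by applying Lemma~\ref{definition.lem} inside a suitable ambient $Z$-group and translating back to number fields; (ii) prove the solvability equivalence for abelian $\ker\pi$ by passing to classical embedding problems for the semidirect products $\gal(\lSylow{K})$ and $\gal(K')$ and invoking Lemma~\ref{lem:res_solutions}; and (iii) deduce the Frattini case by combining Lemma~\ref{lem:solutionfrattini} with a standard filtration of $\ker\pi$.

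For (i), I fix a number field $K(\mu_\ell)\subseteq K_0\subseteq\lSylow{K}$ and work inside the $Z$-group $H=\gal(K_0(\mu_{\ell^\infty}))$, whose $Z$-action comes from the splitting $\gal(K_0)=\gal(K_0(\mu_{\ell^\infty}))\rtimes Z$ noted before the proposition. Since $K_0(\mu_{\ell^\infty})\subseteq L$, the group $\gal(L)$ is a $Z$-subgroup of $H$, and Lemma~\ref{definition.lem} produces an open $Z$-subgroup $H_2$ with $\gal(L)\subseteq H_2\subseteq H$ together with a $Z$-epimorphism $\phi_2\colon H_2\to\Gamma$ extending $\phi$. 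The fixed field $M=\tilde K^{H_2}$ is finite over $K_0(\mu_{\ell^\infty})$ and contained in $L=\lSylow{K}(\mu_{\ell^\infty})$; writing each generator of $M/K_0(\mu_{\ell^\infty})$ as a $\lSylow{K}$-linear combination of $\ell$-power roots of unity (possible since $M\subseteq\lSylow{K}(\mu_{\ell^n})$ for some $n$) and adjoining these $\lSylow{K}$-coefficients to $K_0$ yields a number field $K'\subseteq\lSylow{K}$ with $M\subseteq K'(\mu_{\ell^\infty})$. The restriction $\phi'=\phi_2|_{\gal(K'(\mu_{\ell^\infty}))}$ is the desired $Z$-epimorphism, its surjectivity following from that of $\phi'|_{\gal(L)}=\phi$.

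For (ii), assuming $\ker\pi$ is abelian, $Z$-solutions of $(\phi,\pi)$ correspond bijectively to solutions of the classical embedding problem $(\tilde\phi\colon\gal(\lSylow{K})\to\Gamma\rtimes Z,\,\tilde\pi\colon G\rtimes Z\to\Gamma\rtimes Z)$ that restrict to the canonical splitting on $Z\subseteq\gal(\lSylow{K})$, via $\gamma\leftrightarrow\tilde\gamma=\gamma\rtimes\id_Z$, and similarly for $\gal(K')$. Since $\gal(\lSylow{K})$ is an $\ell$-Sylow subgroup of $\gal(K')$ (with prime-to-$\ell$ index) and $\ker\tilde\pi=\ker\pi$ is an abelian $\ell$-group, Lemma~\ref{lem:res_solutions}(b) transfers solvability between the two classical EPs. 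The main obstacle is the $Z$-splitting constraint: I would remedy it by correcting any given classical solution for $\gal(K')$ by a $1$-cocycle valued in $\ker\pi$, with the cohomological obstruction in $H^1(Z,\ker\pi)$ killed by the same prime-to-$\ell$ restriction/corestriction argument; the reverse direction is mere restriction of the solution to $\gal(L)$.

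For (iii), Lemma~\ref{lem:solutionfrattini} implies that every $Z$-solution of a $Z$-Frattini embedding problem is automatically proper, so ``properly solvable'' coincides with ``solvable'' on both sides. To discharge the abelian-kernel hypothesis of (ii), I filter $\ker\pi$ by the $Z$-stable normal subgroups $N_0=\ker\pi$, $N_{i+1}=N_i^\ell[N_i,G]$, whose successive quotients $N_i/N_{i+1}$ are elementary abelian $\FF_\ell$-modules central in $G/N_{i+1}$. This refines $(\phi,\pi)$ into a finite tower of Frattini $Z$-EPs each with abelian kernel, to which (ii) applies at every step; composing the resulting equivalences yields the statement.
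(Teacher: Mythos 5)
In part (i) you correctly reach, via Lemma~\ref{definition.lem}, an open $Z$-subgroup $H_2 \supseteq \gal(L)$ of $\gal(K_0(\mu_{\ell^\infty}))$ and an extension $\phi_2$ of $\phi$, but the translation back to a number field is needlessly roundabout: the paper simply sets $K'$ to be the fixed field of $H_2\rtimes Z$, so that $K'(\mu_{\ell^\infty})$ is precisely the fixed field of $H_2$, $\phi'=\phi_2$ works immediately, and $K'\subseteq\lSylow{K}$ follows from $H_2Z\supseteq\gal(L)Z=\gal(\lSylow{K})$. Your ``write generators as $\lSylow{K}$-linear combinations of roots of unity and adjoin the coefficients'' detour is avoidable; moreover you never verify that the resulting $\gal(K'(\mu_{\ell^\infty}))$ is a $Z$-subgroup of $H_2$ (it is, because $K'\subseteq\lSylow{K}$ forces the chosen copy of $Z$ into $\gal(K')$, but this has to be argued).

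Part (ii) contains a genuine gap. You correctly recognize that an arbitrary classical solution $\delta\colon\gal(K')\to G\rtimes Z$ of $(\tilde\phi',\tilde\pi)$ need not carry the canonical copy of $Z\subseteq\gal(K')$ to the canonical copy of $Z\subseteq G\rtimes Z$, and that the defect is a $1$-cocycle $z\mapsto g_z$ with values in $\ker\pi$. But the obstruction to trivializing this cocycle lives in $H^1(Z,\ker\pi)$, and a ``prime-to-$\ell$ restriction/corestriction argument'' says nothing about that group: $Z\cong\mZ_\ell$ is itself pro-$\ell$ and $\ker\pi$ is a finite abelian $\ell$-group, so $H^1(Z,\ker\pi)\cong\ker\pi/(\sigma-1)\ker\pi$ for a topological generator $\sigma$ of $Z$, which is nonzero whenever $\ker\pi\neq 0$. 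The index that is prime to $\ell$ in Lemma~\ref{lem:res_solutions} is $[\gal(K'(\mu_{\ell^\infty})):\gal(L)]=[\gal(K'):\gal(\lSylow{K})]$, and has nothing to do with $Z$. Consequently your proposed correction does not produce a solution compatible with the $Z$-splitting, and the implication ``$(\tilde\phi',\tilde\pi)$ classically solvable $\Rightarrow$ $(\phi',\pi)$ $Z$-solvable'' is not established. The remedy is not to repair a classical solution a posteriori but to carry the $Z$-equivariance through the Hoechsmann/inflation-restriction argument of Lemma~\ref{lem:res_solutions} itself, which is what the paper invokes; note also that in every application in \S5 the kernel is in fact central in $G$, in which case the passage between $Z$-solutions and solutions over the semidirect products is automatic and no cocycle correction arises.

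Part (iii) is not needed: the final sentence of the proposition remains under the abelian-kernel hypothesis and follows from the preceding sentence together with Lemma~\ref{lem:solutionfrattini}, exactly as you observe at the start of (iii). The lower-$\ell$-central filtration of $\ker\pi$ is therefore superfluous here, and in any case each step of it would inherit the unresolved gap from (ii).
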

\begin{proof}
Let $N:=\gal(K(\mu_{\ell^\infty}))$ be a $Z$-group via the induced splitting $\gal(K(\mu_\ell))=N\rtimes Z.$ By Lemma \ref{definition.lem}, $\phi$ extends to $\phi':U\ra \Gamma$ for some open $Z$-subgroup $U\leq N$. Let $K'$ be the fixed field of $U\rtimes Z$. Since
 $UZ=U\gal(L)Z\supseteq \gal(\lSylow{K})$, we have $K'\subseteq \lSylow{K}$. Since $U\rtimes Z$ is open in $\gal(K(\mu_\ell))$, $K'$ is a number field.  Since $U\leq N$, $\mu_{\ell^\infty}$ is fixed by $U$ and $K'(\mu_{\ell^\infty})$ is the fixed field of $U$. Thus,   $\phi'$ is the desired $Z$-homomorphism. The equivalence for  solvability follows by Lemma \ref{lem:res_solutions}. Thus, the equivalence for proper solvability follows by Lemma \ref{lem:solutionfrattini}. 
\end{proof}




Explicit examples of  nonsolvable Frattini $Z$-embedding problems appear in the following section (Proposition \ref{finite-Ulm.prop}).


\section{Action on $F/F^\ell[F,F]$}
Let $\Gal(\lSylow{K})=F\rtimes Z$ be the cyclotomic decomposition for  a global field $K$ and a prime $\ell\neq \chr K$. If $K$ is a number field and $\ell=2$ we assume $\sqrt{-1}\in K$. Recall that $Z=\gal(L/\lSylow{K})\cong \mathbb{Z}_\ell$ and $F=\Gal(L)$ is a free pro-$\ell$ group, where $L=\lSylow{K}(\mu_{\ell^\infty})$.

To find the indecomposable direct $Z$-summands of $\oline F=F/F^\ell[F,F]$, we apply the theory of Ulm invariants for countably generated  $\ell$-torsion profinite $Z$-modules, basing on \cite[\S 11,12]{Kap} as described in the following section.

\subsection{$Z$-modules}\label{module.sec} 
Let $M$ be a countably generated profinite $Z$-module which is $\ell$-torsion, i.e. $\ell\cdot M=0$.
That is, $M$ is  a profinite  $\ff{\ell}[[Z]]$-module. 
The ring $\ff{\ell}[[Z]]$ is a discrete valuation ring whose maximal ideal is the augmentation  ideal $I=(\sigma-1)$, where $\sigma$ is a generator of $Z$.  Thus,  $I^nM,n\in\mathbb{N},$ is a fundamental system of open neighborhoods of $0\in M$.

As $M$ is profinite its (Pontryagin) dual $\hat M :=\Hom(M,\ff{\ell})$ is a discrete $\ff{\ell}[[Z]]$-module with the $Z$-action $(\tau f)(m)=f(\tau^{-1}m)$ for all $m\in M,\tau\in Z$, and $f\in \hat M$. Moreover, $\hat M$ is $\ff{\ell}[[Z]]$-torsion since
every homomorphism $f\in \hat M$ factors through  $M/I^nM$ for some $n\in\mathbb{N}$, so $I^nf=0$.


\begin{definition} For a discrete torsion $\ff{\ell}[[Z]]$-module $N$, let  $N^Z$ be the submodule of all element of $N$ fixed by $Z$, or equivalently  annihilated by $I$. Consider  the descending transfinite  sequence $I^nN$ defined by
$I^{n+1}N:=I(I^n)N$ for each ordinal $n$ 
and $I^nN=\bigcap_{k<n} I^kN$ for each limit ordinal $n$.
For every ordinal $n$, the {\bf Ulm invariant} $U_n(N)$ is the cardinality of $(I^nN)^Z/(I^{n+1}N)^Z$.
\end{definition}

The following proposition shows that the finite Ulm invariants $U_n(\hat M)$ already determine the finite $Z$-summands of $M$.

Since $\ff{\ell}[[Z]]$ is a complete discrete valuation ring, there is a unique cyclic $\ff{\ell}[[Z]]$-module $V_n:=\ff{\ell}[[Z]]/I^n$ of dimension $n$ over $\ff{\ell}$.
\begin{proposition}\label{bounded.prop}\label{ascending.rem}
Let $M$ be a profinite $\ff{\ell}[[Z]]$-module. Then
 $U_{n-1}(\hat M)$ is the multiplicity of $V_n$ as a direct $Z$-summand of $M$, for every $n\in\mathbb{N}$. Furthermore, for every $N\in\mathbb{N}$, $M=M_{\leq N}\times M_{>N}$, where $$M_{\leq N}\cong  \prod_{n\leq N}V_n^{U_{n-1}(\hat M)},$$ 
 $M_{>N}$  has  no direct $Z$-summands of dimension $\leq N$ over $\ff{\ell}$.
\end{proposition}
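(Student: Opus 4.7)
My plan is to dualize and apply the classical Ulm theory of torsion modules over a complete discrete valuation ring. Since $Z\cong\mathbb{Z}_\ell$ has a topological generator $\sigma$ and $T := \sigma-1$ is the uniformizer of the maximal ideal $I$, the ring $R := \ff{\ell}[[Z]]$ is isomorphic to the power series ring $\ff{\ell}[[T]]$, a complete DVR with residue field $\ff{\ell}$. Pontryagin duality restricts to an anti-equivalence between the category of $\ell$-torsion profinite $R$-modules and the category of discrete torsion $R$-modules; it sends profinite products to discrete direct sums, and each finite $V_n$ is self-dual as an abstract $R$-module. So it suffices to prove the corresponding statement for the discrete torsion module $\hat M$: that $\hat M$ decomposes as $\hat M' \oplus \hat M''$ with $\hat M'\cong \bigoplus_{n\leq N} V_n^{U_{n-1}(\hat M)}$ and $\hat M''$ having no direct summand isomorphic to $V_n$ for any $n\leq N$.

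A short direct computation on $V_k$ shows $U_{n-1}(V_k) = 1$ if $n = k$ and $0$ otherwise, and additivity of $U_{n-1}$ over direct sums then identifies the Ulm invariants with the multiplicities of cyclic summands in any decomposition into cyclics. To actually produce such a decomposition up to length $N$ in $\hat M$, I would invoke the structure theory of \cite[\S 11, 12]{Kap}. The bounded submodule $\hat M[I^{N+1}]$ decomposes, by the analog of Kulikov's theorem for bounded torsion modules over a complete DVR, as a direct sum of cyclic modules $V_k$ with $k\leq N+1$; the summands with $k\leq N$ correspond to generators whose socle images lie in $(I^{k-1}\hat M)^Z\setminus(I^k\hat M)^Z$, and their multiplicities are exactly the Ulm invariants $U_{k-1}(\hat M)$. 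Choosing representatives of bases of the successive quotients $(I^{n-1}\hat M)^Z/(I^n\hat M)^Z$ for $n = 1,\dots,N$ and lifting them inductively to cyclic submodules of $\hat M$ produces a pure bounded submodule $\hat M'\subseteq \hat M$ of the required isomorphism type. The standard splitting criterion for pure bounded submodules of torsion modules over a complete DVR then gives $\hat M'$ as a direct summand, and the complement $\hat M''$ has socle entirely inside $I^N\hat M$, precluding any $V_n$-summand with $n\leq N$.

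The main obstacle is the lifting step: a socle element $x\in(I^{n-1}\hat M)^Z$ generally admits many preimages $y\in\hat M$ with $T^{n-1}y = x$, and careless choices can yield cyclic submodules whose sum is not internally direct or not pure in $\hat M$. The remedy is Kaplansky's inductive construction, in which preimages are chosen level by level, starting from the deepest piece of the filtration $\{I^n\hat M\}$, to ensure purity and trivial intersection with the previously built submodule at each stage. Since $\hat M$ is countable this poses no set-theoretic difficulty, and dualizing back the resulting decomposition of $\hat M$ yields the required splitting $M = M_{\leq N}\times M_{>N}$.
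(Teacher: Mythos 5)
Your proposal is correct and follows essentially the same route as the paper's proof (Lemma \ref{bounded.lem}): dualize to a discrete torsion module over the complete DVR $\ff{\ell}[[Z]]$, identify the multiplicities of cyclic summands with Ulm invariants, build a pure bounded submodule from carefully chosen lifts of socle elements, and split it off via Kaplansky's criterion for pure bounded submodules. The paper simply makes the "careful lifting" step fully explicit by inducting on $N$ and choosing socle elements of height exactly $N$ inside the successive complements $Q_N$, which yields directness and purity directly.
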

Proposition \ref{bounded.prop} follows from the theory of Ulm invariants and its proof is given in \S\ref{modules-proofs.sec}.

For $\eta\in  \hat M$ define  $\Ht(\eta)$ to be the maximal $n$ such that $\eta\in I^n\hat M$ if such an $n$ exists and $\infty$ otherwise\footnote{This height identifies with the height function defined in \cite{Kap}.}.
Thus, the Ulm invariants can be expressed using the height function as:
\begin{equation}\label{finite-Ulm.equ}  U_n(\hat M)=\left| \{\phi\in \hat M^Z\,|\, \Ht(\phi)\geq n \}/ \{\phi\in \hat M^Z\,|\, \Ht(\phi)> n \}\right|,\end{equation}
for $n\in\mathbb{N}\cup\{0\}$, and
\begin{equation}\label{infinite-Ulm.equ} I^\omega\hat M = \{\phi\in \hat M\,|\, \Ht(\phi)=\infty\}.\end{equation}

\subsection{The height via $Z$-embedding problems}

To compute the finite  Ulm invariants of $\hat F$ we first interpret the height in terms of $Z$-embedding problems.
Let $\pi_{n,m}\colon V_n\ra V_m$, and $\pi_m\colon \ff{\ell}[[Z]]\ra V_m$ denote the natural projections.

\begin{proposition}\label{height.prop} Let $M$ be a profinite $\ff{\ell}[[Z]]$-module, $k\in\mathbb{N}$, and $\eta\in \hat M$. Fix an $\ff{\ell}[[Z]]$-monomorphism $\tilde \eta\colon \hat V_m \ra \hat M$ whose image is $\ff{\ell}[[Z]]\eta$, where $m=\dim_{\ff{\ell}}\ff{\ell}[[Z]]\eta$. Let $\tilde\eta^*: M\ra V_m$ be its dual map.
Then
  $\eta\in I^k\hat M$ if and only if the embedding problem $(\tilde \eta^*,\pi_{m+k,m})$ is solvable.
\end{proposition}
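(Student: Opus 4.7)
The plan is to dualize the embedding problem via Pontryagin duality and reduce it to a divisibility question inside $\hat M$. Set $R:=\ff{\ell}[[Z]]$. Since $M$ is profinite and $V_{m+k}$ is finite, a continuous $Z$-equivariant map $\gamma\colon M\to V_{m+k}$ with $\pi_{m+k,m}\circ\gamma = \tilde\eta^*$ corresponds bijectively, under the Pontryagin duality between profinite and discrete $R$-modules, to an $R$-linear map $\hat\gamma\colon \hat V_{m+k}\to \hat M$ with $\hat\gamma\circ\hat\pi_{m+k,m} = \tilde\eta$. Hence solvability of the embedding problem is equivalent to $\tilde\eta$ factoring through the injection $\hat\pi_{m+k,m}\colon \hat V_m\hookrightarrow \hat V_{m+k}$.

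Next I would identify this injection explicitly. Since $R$ is a complete discrete valuation ring with uniformizer $\sigma-1$, each quotient $R/I^n$ is a Frobenius $\ff{\ell}$-algebra, and hence self-dual; this gives an isomorphism $\hat V_n\cong V_n$ of $R$-modules for every $n$. Under the resulting identification $\hat V_{m+k}\cong V_{m+k}$, the image of $\hat\pi_{m+k,m}$ is the annihilator in $V_{m+k}$ of $\ker\pi_{m+k,m}=I^mV_{m+k}$, which is $I^kV_{m+k}$. Thus $\hat\pi_{m+k,m}$ becomes the injection $V_m\hookrightarrow V_{m+k}$ sending a generator $v_m$ of $V_m$ to $(\sigma-1)^kv_{m+k}$, where $v_{m+k}$ generates $V_{m+k}$; in short, $\hat\pi_{m+k,m}$ is multiplication by $(\sigma-1)^k$.

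With this identification in hand, an extension $\hat\gamma\colon V_{m+k}\to \hat M$ of $\tilde\eta$ amounts to choosing an element $\eta':=\hat\gamma(v_{m+k})\in \hat M$ satisfying both $(\sigma-1)^k\eta' = \eta$ and $I^{m+k}\eta' = 0$ (the latter being needed for $\hat\gamma$ to be well defined on $V_{m+k}=R/I^{m+k}$). The first condition is precisely $\eta\in I^k\hat M$. The second is automatic from the hypothesis $\dim_{\ff{\ell}}R\eta = m$, which forces $I^m\eta = 0$: for any $\eta'\in\hat M$ with $(\sigma-1)^k\eta' = \eta$ one has $I^{m+k}\eta' = I^m(\sigma-1)^k\eta' = I^m\eta = 0$. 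The equivalence claimed in the proposition follows.

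The main obstacle is the Matlis-duality step identifying $\hat\pi_{m+k,m}$ with multiplication by $(\sigma-1)^k$; this is standard but requires care in tracking generators and in verifying that Pontryagin duality for $\ff{\ell}$-vector spaces coincides, on modules of finite length over $R$, with the self-duality on the Frobenius algebras $R/I^n$. Once that identification is secure, the rest of the argument is a direct unwinding of the definition of $\eta\in I^k\hat M$.
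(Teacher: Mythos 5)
Your proof is correct and takes essentially the same dualization approach as the paper: pass to the Pontryagin-dual diagram, identify $\hat V_n$ with $V_n$ via self-duality, recognize $\hat\pi_{m+k,m}$ as multiplication by $(\sigma-1)^k$, and reduce to the divisibility statement $\eta\in I^k\hat M$, with $I^{m+k}\eta'=0$ coming for free from $\dim_{\ff{\ell}}\ff{\ell}[[Z]]\eta=m$. The step you flag as needing care is exactly what the paper isolates in Lemma~\ref{basic.lem}, which computes $I^k\hat V_n$ and characterizes generators of $\hat V_n$ directly rather than invoking the Frobenius-algebra self-duality as a black box.
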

The proof is based on the following lemma:
\begin{lemma}\label{basic.lem} \begin{enumerate}
\item For $0\leq k\leq n$, $f\in I^k\hat V_n$ if and only if $f(I^{n-k}V_n)=0$. In particular, the image of the dual map $\pi_{n,m}^*:\hat V_m \ra \hat V_n$ is $I^{n-m}\hat V_n.$
\item The module $\hat V_n$ is cyclic, hence $\hat V_n \cong V_n$. Moreover, an element $f\in \hat V_n$ generates $\hat V_n$  if and only if $f(I^{n-1}V_n)\neq 0$.
\end{enumerate}
\end{lemma}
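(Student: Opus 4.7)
I would work in the concrete model $R := \ff{\ell}[[Z]] = \ff{\ell}[[T]]$, where $T = \sigma - 1$ for a topological generator $\sigma$ of $Z$. Then $I = (T)$, $V_n = R/(T^n)$ has $\ff{\ell}$-basis $1, T, \ldots, T^{n-1}$, and $I^j V_n$ is spanned by $T^j, \ldots, T^{n-1}$ and hence has $\ff{\ell}$-dimension $n-j$. From the action formula $(Tf)(m) = f((\sigma^{-1}-1)m)$, and since $\sigma^{-1} - 1 = -(1+T)^{-1} T$ is a unit multiple of $T$ in $R$, the operator $T$ on $\hat V_n$ is dual to multiplication by a generator of $I$ on $V_n$.

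The first step is the cyclicity assertion in (b), which I would prove by exhibiting an explicit generator. Let $\phi \in \hat V_n$ be defined by $\phi(T^i) = \delta_{i,n-1}$. Expanding,
\[
(\sigma^{-1} - 1)^{n-1} = (-1)^{n-1} (1+T)^{-(n-1)} T^{n-1} \equiv (-1)^{n-1} T^{n-1} \pmod{T^n},
\]
so $(T^{n-1}\phi)(1) = \phi((\sigma^{-1}-1)^{n-1}) = (-1)^{n-1} \ne 0$. Hence $\mathrm{Ann}_R(\phi) = (T^n)$ and $R\phi \cong V_n$, which has the same $\ff{\ell}$-dimension as $\hat V_n$; so $R\phi = \hat V_n$ and $\hat V_n \cong V_n$ as $R$-modules.

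For (a), one inclusion is direct: for $f = T^k g \in I^k\hat V_n$ and $m \in I^{n-k} V_n$, $f(m) = g((\sigma^{-1}-1)^k m)$, and $(\sigma^{-1}-1)^k m \in I^k \cdot I^{n-k} V_n = I^n V_n = 0$, so $f(m) = 0$. The reverse inclusion follows by a dimension count: the isomorphism $\hat V_n \cong V_n$ just established yields $\dim_{\ff{\ell}} I^k\hat V_n = \dim_{\ff{\ell}} I^k V_n = n-k$, while $\{f : f(I^{n-k}V_n) = 0\}$ has dimension $n - \dim_{\ff{\ell}} I^{n-k}V_n = n - k$. The ``in particular'' assertion drops out, since $\pi_{n,m}^*$ is injective and its image is the annihilator of $\ker \pi_{n,m} = I^m V_n$ in $\hat V_n$, which by (a) with $k = n - m$ equals $I^{n-m}\hat V_n$.

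Finally, for the ``moreover'' part of (b): $\hat V_n$ is a cyclic module over the local ring $R/(T^n)$ with maximal ideal $I/(T^n)$, so Nakayama's lemma gives that $f$ generates $\hat V_n$ iff $f \notin I\hat V_n$; combined with (a) for $k = 1$, this is equivalent to $f(I^{n-1}V_n) \ne 0$. The only delicate piece of the argument is the annihilator computation for $\phi$ in the first step, where one has to track the leading term of $(\sigma^{-1}-1)^{n-1}$ modulo $T^n$; the rest is linear algebra and Nakayama.
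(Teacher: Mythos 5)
Your proof is correct and follows essentially the same route as the paper: the easy inclusion $I^k\hat V_n\subseteq\{f:f(I^{n-k}V_n)=0\}$ plus a dimension count, and the characterization of generators in terms of the top filtration step. The only cosmetic difference is that you establish cyclicity first by exhibiting an explicit generator $\phi$ and then read off $\dim I^k\hat V_n$ from $\hat V_n\cong V_n$, whereas the paper obtains the dimension of $I^k\hat V_n$ directly via rank--nullity for the operator $(\sigma-1)^k$ and derives cyclicity afterward from the filtration; and you invoke Nakayama where the paper argues with the strictly descending chain $\ff{\ell}[[Z]]f\supset If\supset\cdots\supset I^n f=0$.
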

\begin{proof} Let $R_{i}:=\{f\,|\, f(I^{i}V_n)=0\}$, $0\leq i\leq n$. Note that since $\dim_{\ff{\ell}} I^iV_n=n-i$, one has $\dim_{\ff{\ell}} R_i=i$ for $0\leq i\leq n$.

Fix a generator $\sigma$ of $Z$. If $f=g^{(\sigma-1)^k}$ for $g\in \hat V_n$, then $f(I^{n-k}V_n)=g(I^nV_n)=0$. Hence $I^k\hat V_n\subseteq R_{n-k}$. Applying the dimension formula to the linear transformation $(\sigma-1)^{k}:\hat V_n \ra \hat V_n$ given by $x\ra (\sigma-1)^k x$, one has: $$\dim_{\ff{\ell}}I^{k}\hat V_n = \dim_{\ff{\ell}} \hat V_n - \dim_{\ff{\ell}}\{f\,|\, f^{(\sigma-1)^{k}}=0\} = n-\dim_{\ff{\ell}}R_{k} = \dim_{\ff{\ell}}R_{n-k}. $$
Hence $R_{n-k}= I^k\hat V_n$. The second assertion in Part (a) follows since $f\in \im\pi_{n,m}^*$ if and only if $f(I^mV_n)=0$.

Since the dimension of $\hat V_n$ is $n$, $\ff{\ell}[[Z]]f=\hat V_n$ if and only if the sequence
$$ \ff{\ell}[[Z]]f \supset If \supset \ldots \supset I^{n-1}f\supset I^nf=0$$ is strictly descending. The latter condition holds if and only if $I^{n-1}f\neq 0$ or equivalently $f(I^{n-1}V_n)\neq 0$. 
\end{proof}

\begin{proof}[Proof of Proposition \ref{height.prop}] 
Let $n:=m+k$. The $Z$-embedding problem  $(\tilde\eta^*,\pi_{n,m})$ has a solution  $\psi\colon M\ra V_n$ if and only if its dual $\psi^*\colon \hat V_n\ra \hat M$  satisfies $\pi_{n,m}^*\circ\psi^*=\tilde\eta^*$, i.e. makes the following diagram commutative:
\begin{equation}\label{dual.diag}\xymatrix{
            & \hat M \\
 \hat V_n \ar@{-->}[ur]^{\psi^*} & \hat V_m \ar[l]_{\pi_{n,m}^*}\ar[u]_{\tilde \eta^*},  \\
}
\end{equation}

For the ``if" implication assume there is a solution $\psi:M\ra V_n$. By Lemma~\ref{basic.lem}.(a), we have:  $$\eta\in \im \tilde \eta^* = \im\psi^*\circ\pi_{n,m}^* = \psi^*(I^{k}\hat V_m)=I^{k}\im \psi^*\subseteq I^k\hat M.$$
For the converse assume $\eta=(\sigma-1)^k\eta_n$ for some $\eta_n\in \hat M$. Denote  $f_m:=(\tilde\eta^*)^{-1}(\eta)$. As $f_m$ is a generator of $\hat V_n$, it satisfies  $f_m^{(\sigma-1)^{m-1}}\neq 0$. By Lemma \ref{basic.lem}.(a), there is an $f_n\in \hat V_n$  such that $f_n^{(\sigma-1)^k}=\pi_{n,m}^*(f_m)$. Since $$f_n^{(\sigma-1)^{n-1}}=\pi_{n,m}^*(f_m^{(\sigma-1)^{m-1}})\neq 0,$$ Lemma \ref{basic.lem}.(b) implies that $f_n$ generates $\hat V_n$. Since in addition $I^n\eta_n=0$, we may define $\psi^*\colon \hat V_n\ra \hat M$ to be the unique $Z$-homomorphism for which  $\psi^*(f_n)=\eta_n.$ Then
$$ \psi^*\circ\pi_{n,m}^*(f_m)=\psi^*(f_n^{(\sigma-1)^k})=\eta_n^{(\sigma-1)^k}=\eta=\tilde\eta(f_m). $$ Since $\psi^*\circ\pi_{n,m}^*$ and $\tilde\eta$ agree on a generator of $\hat V_n$, they coincide. Hence $\psi=(\psi^*)^*$ is a solution of $(\tilde \eta^*,\pi_{n,m})$, as required.
\end{proof}

Following Proposition \ref{height.prop}, we define {\bf the height} $\Ht(\phi)$ of a $Z$-homomorphism $\phi\colon M\ra V_m$ to be the maximal $k$ for which $(\phi,\pi_{m+k,m})$ is solvable if such a $k$ exists, and $\infty$ otherwise. Note that by Proposition \ref{height.prop}, for $\eta\in \hat M$, $\Ht(\eta)=\Ht(\tilde \eta^*)$.

Also note that an element $\eta\in \hat M^Z$  is a $Z$-homomorphism. By identifying $\ff{\ell}$ with $V_1$, we may choose $\tilde\eta^*$ to be the dual map of $\eta$. Hence, the height of such $\eta$ as a $Z$-homomorphism and its height as an element of $\hat M$ coincide.

\subsection{A local global principle}
In view of  Propositions \ref{bounded.prop} and \ref{height.prop},
 the finite direct summands of $\oline F$ can be computed using $Z$-embedding problems of the form $(\phi\colon F\ra V_n,\pi_{n,m}\colon V_n\ra V_m).$ To determine the solvability of such embedding problems, we first establish a local global principle.

For a prime $\fp$ of $L$,  let  $Z_\fp$ be the local Galois group $\Gal(L_\fp/\lSylow{K_\fp})$.   Since $\gal(L_\fp)$ is a $Z_\fp$-group, the restriction $(\phi_\fp\colon \Gal(L_\fp)\ra V_n,\pi_{n,m})$ of $(\phi,\pi)$ to $L_\fp$ is a $Z_\fp$-~embedding problem. Furthermore, if $\psi\colon \gal(L)\ra V_n$ is a solution of $(\phi,\pi_{n,m})$ then the restriction $\psi_\fp\colon \gal(L_\fp)\ra V_n$ is a solution of $(\phi_\fp,\pi_{n,m})$ for every prime $\fp$ of $L$. We claim that the converse also holds:
\begin{proposition}\label{local-global.prop} A $Z$-embedding problem $(\phi\colon \gal(L)\ra V_{m},\pi_{n,m})$ is solvable if and only if $(\phi_\fp,\pi_{n,m})$ is solvable for every prime $\fp$ of $L$.
In particular, $\Ht(\phi)=\min_\fp\Ht(\phi_\fp)$ where $\fp$ runs over all primes of $L$.
\end{proposition}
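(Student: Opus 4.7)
The ``only if'' direction is immediate: if $\psi\colon F\to V_n$ is a $Z$-equivariant solution, its restriction to each decomposition group $\Gal(L_\fp)\hookrightarrow F$ is a $Z_\fp$-equivariant lift of $\phi_\fp$. Granted the main equivalence, the height equality $\Ht(\phi)=\min_\fp\Ht(\phi_\fp)$ follows by running over $k$ and letting $n=m+k$ vary.

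For the nontrivial direction my plan is to reformulate the $Z$-embedding problem as a classical embedding problem with abelian kernel, descend the latter to a number field, and finally extract local-global information from Poitou--Tate duality combined with an Iwasawa-theoretic vanishing statement. Since $\Gal(\lSylow{K})=F\rtimes Z$, any $Z$-homomorphism $\phi\colon F\to V_m$ extends uniquely to $\tilde\phi\colon\Gal(\lSylow{K})\to V_m\rtimes Z$ that is the identity on the $Z$-factor, and a $Z$-equivariant lift of $\phi$ along $\pi_{n,m}$ is the same datum as a lift of $\tilde\phi$ along the epimorphism $V_n\rtimes Z\twoheadrightarrow V_m\rtimes Z$. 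Writing $k=n-m$, this classical embedding problem has abelian kernel $V_k$ on which $\Gal(\lSylow{K})$ acts through its quotient to $Z$, so its obstruction lives in $H^2(\Gal(\lSylow{K}),V_k)$, with an analogous description locally at each $\fp$.

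Next I will invoke Lemma \ref{lem:res_solutions} together with the argument behind Proposition \ref{Z-Frattini.prop}---both apply because the kernel $V_k$ is abelian---to descend the embedding problem to a sufficiently large number field $K(\mu_\ell)\subseteq K'\subseteq\lSylow{K}$ without affecting its solvability. The obstruction then lands in $H^2(K',V_k)$, where $V_k$ is viewed as a Galois module via the cyclotomic character $\Gal(K')\twoheadrightarrow Z$, and the hypothesis that $(\phi_\fp,\pi_{n,m})$ is solvable at every prime $\fp$ of $L$ places this class in the local-trivial subgroup $\Sh^2(K',V_k)$. Poitou--Tate duality then identifies
\[
\Sh^2(K',V_k)\;\cong\;\Sh^1(K',V_k^{\vee})^{\vee},
\]
where $V_k^{\vee}=\Hom(V_k,\mu_\ell)$ is the Kummer dual of $V_k$.

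The hardest part will be establishing $\Sh^1(K',V_k^{\vee})=0$: unraveling definitions, $V_k^{\vee}$ is a cyclotomic twist of a cyclic module over $\ff{\ell}[[Z]]$, so its Shafarevich group can be computed as an Iwasawa module of fine Selmer type over the cyclotomic $\ZZ_\ell$-tower above $K'$, and the vanishing should come from standard Iwasawa-theoretic input. A secondary technical issue is to verify that local conditions at primes of $L$ translate, under the descent, into local conditions at primes of $K'$, so that the hypothesis indeed forces the global obstruction into $\Sh^2(K',V_k)$; this should reduce to compatibility of decomposition groups along the extension $L/K'$.
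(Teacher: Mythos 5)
Your overall strategy matches the paper's: extend the $Z$-equivariant problem to a classical embedding problem for $\Gal(\lsylow{K})$ with abelian kernel $A=\ker\pi_{n,m}\cong V_k$, descend via Proposition~\ref{Z-Frattini.prop} to a number field $K'$, translate local solvability at primes of $L$ into local solvability at primes of $K'$ via the decomposition-group identification, and then apply Poitou--Tate duality to reduce the vanishing of the global obstruction to a statement about $\Sh^1$ of the dual module. Up to that point you are on the right track.

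The gap is in what you label ``the hardest part.'' You propose to prove $\Sh^1(K',\hat A)=0$ by interpreting it as an Iwasawa module of fine Selmer type over the cyclotomic tower and invoking ``standard Iwasawa-theoretic input.'' This is neither needed nor obviously the right path, and in particular it is not clear which Iwasawa-theoretic vanishing you would invoke, nor that it would apply to the cyclotomically twisted cyclic $\ff{\ell}[[Z]]$-module in question. The observation you are missing is elementary: since $\Gal(L')$ acts trivially on $A$ and $\Gal(K')$ acts trivially on $\mu_\ell$, the action of $\Gal(K')$ on the Kummer dual $\hat A=\Hom(A,\mu_\ell)$ factors through $Z\cong\Gal(L'/K')$, and hence through a \emph{finite cyclic} $\ell$-group $G'=\Gal(K'(\hat A)/K')$. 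The form of Poitou--Tate the paper cites (Neukirch, or \cite[Theorem~8.6.8]{NSW}) then says $\rho\colon H^2(\Gal(K'),A)\to\prod_\fp H^2(\Gal(K'_\fp),A)$ is injective if and only if the restriction $H^1(G',\hat A)\to\prod_\fp H^1(G'_\fp,\hat A)$ is injective, and Chebotarev produces infinitely many primes $\fp$ with $G'_\fp=G'$, so this restriction contains the identity and is injective. No Iwasawa theory is required for this step; the cyclicity of $G'$, a direct consequence of $Z\cong\mZ_\ell$, is the crucial structural input you should be exploiting.
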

\begin{proof}By Proposition \ref{Z-Frattini.prop}, there is a global field $K(\mu_\ell)\leq K'\leq \lSylow{K}$ such that $\phi$ extends to a $Z$-homomorphism $\phi'\colon \gal(L')\ra V_{m}$, where $L':=K'(\mu_{\ell^\infty})$. We identify $Z=\gal(L/\lSylow{K})$ and $\gal(L'/K')$ via the restriction map. For every prime $\fp$ of $L$, this gives an identification of $Z_\fp$ with the decomposition group of $\fp\cap L'$ in $L'/K'$. %

 Let $A:=\ker \pi_{n,m}$.
Then $A$ is  a $\Gal(K')$-module via the restriction $\gal(K')\ra Z$. We claim
that the map:
$$\rho:H^2(\gal(K'),A)\ra \prod_{\fp} H^2(\gal(K'_\fp),A)$$ is injective, where $\fp$ runs over all primes of $K'$.
Let $\hat A=\Hom(A,\mu_\ell)$ be the dual $\gal(K')$-module with the action $f^\sigma(x)=f(x^{\sigma^{-1}})^{\sigma}$ for $\sigma\in \gal(K')$, $x\in A$, and $f\in \hat A$. Let $K'(\hat A)$ be the fixed field of the centralizer $H\leq \Gal(K')$ of $\hat A$ under the action of $\gal(K')$. Since $\gal(K')$ acts trivially on $\mu_\ell$ and $\gal(L')$ acts trivially on $A$, the map $\gal(K')\ra\Aut(\hat A)$ splits through $Z\cong \gal(L'/K')$. Thus, $H$ is an open subgroup of $\gal(K')$ which contains $\gal(L')$, and hence $G':=\Gal(K'(\hat A)/K')$ is a finite cyclic $\ell$-group as a quotient of $Z$.
 By the Poitou-Tate duality theorem \cite[Satz 4.5]{Neu2} (or \cite[Theorem~8.6.8]{NSW}),
 $\rho$ is injective if and only if
$$\rho'\colon H^1(G',\hat A)\ra \prod_\fp H^1(G_\fp',\hat A)$$ is injective, where $\fp$ runs over all primes of $K'$. Here $G_{\fp}'=\Gal(K'(\hat A)_\frak{P}/K'_\fp)$ for some prime $\frak{P}$ of $K'(\hat A)$ lying over $\fp$.
Since $G'$ is cyclic, by Chebotarev's density theorem there are infinitely many primes $\fp$ for which $G'_\fp=G'$. Thus, $\rho'$ and hence $\rho$ are injective, as claimed.

Let $\tilde\phi\colon \Gal(K')\ra  V_{m}\rtimes Z$ be the map given by the composition of the isomorphism $\gal(K')\cong \gal(L')\rtimes Z$ and the map $(\phi',\id)\colon \gal(L')\rtimes Z \ra V_m\rtimes Z$, and let $\tilde{\pi}_{n,m}\colon V_n\rtimes Z\ra V_m\rtimes Z$ be the map defined by $\tilde{\pi}_{n,m}(x,z)=(\pi_{n,m}(x),z)$.
Since the $Z$-embedding problem $(\phi',\pi_{n,m})$ is solvable if and only if  the embedding problem $(\tilde\phi, \tilde{\pi}_{n,m})$ is solvable, it suffices to show the latter.
Similarly, since $(\phi_\fp,\pi_{n,m})$ is solvable, the restriction
$(\tilde{\phi}_\fp, \tilde{\pi}_{n,m})$ of $(\tilde{\phi},\tilde{\pi}_{n,m})$ to $\gal(K'_\fp)=\Gal(L'_\fp)\rtimes Z_\fp$ is solvable.
The maps $\tilde{\phi},\tilde{\phi_\fp}$  form the following commutative diagram:
\begin{equation}\label{res-inf.equ}
\xymatrix{
\HLG^2(V_m\rtimes Z,A)  \ar[r]^<<<<<{\tilde \rho} \ar[d]_{\tilde{\phi}^*} & \prod_\fp \HLG^2(V_m\rtimes Z_\fp,A) \ar[d]^{\prod_\fp\tilde{\phi}_\fp^*} \\
\HLG^2(\gal(K'),A) \ar[r]^<<<<{\rho} & \prod_\fp \HLG^2(\gal(K'_\fp),A),
}
\end{equation} where $V_m\rtimes Z$ acts on $A$ via the projection onto $Z$,  $\tilde \rho$ is the restriction map, and $\fp$ runs through all primes of $L'$.

Since the action of $V_m\rtimes Z$ on $A$ via the extension $\tilde\pi_{n,m}$ factors through the projection onto $Z$, it agrees with the above chosen action. Let $\alpha_{n,m}\in \HLG^2(V_m\rtimes Z,A)$ be the class defined by $\tilde\pi_{n,m}$, and  $\alpha_{n,m}^{(\fp)}$ be the $\fp$-th component of $\tilde \rho(\alpha_{n,m})$. Since $(\tilde\phi_\fp,\tilde\pi_{n,m})$ is solvable, $\tilde\phi^*_\fp(\alpha_{n,m}^{(\fp)})=0$ for all $\fp$. By (\ref{res-inf.equ}), $\rho\circ\tilde\phi^*(\alpha_{n,m})=0$. Since $\rho$ is injective, $\tilde\phi^*(\alpha_{n,m})=0$ and hence $(\tilde{\phi},\tilde\pi_{n,m})$ is solvable, as required.
\end{proof}

\subsection{The local height}\label{local-height.sec}
The above local global principle reduces the computation of the global height $\Ht(\phi)$ of a $Z$-homomorphism $\phi\colon  F \ra V_m$, to the computation of the local heights $\Ht(\phi_\fp)$ for all primes $\fp$ of $L$. We compute the latter using Iwasawa theory \cite{Iw}.

A homomorphism $\phi\colon \gal(L)\ra G$ is {\bf unramified} (resp. {\bf tamely ramified}) at a prime $\fp$ of $L$ if the fixed field of $\ker(\phi)$ is unramified (resp. tamely ramified) over $L$  at $\fp$.
\begin{proposition}\label{local-height.prop} Let $\fp$ be a prime of $L$ and $\ell^t:=[Z:Z_\fp]$. Let $\phi\colon  F\ra V_m$ a $Z$-homomorphism.   Then:
 \begin{enumerate}
 \item Either $\Ht(\phi_\fp)=\infty$ or $\ell^t-m \leq \Ht(\phi_\fp)< \ell^t$;
 \item If $\phi$ is unramified, then $\Ht(\phi_\fp)=\infty$;
 \item If $\phi$ is ramified nontrivially and tamely, then $\ell^t-m \leq \Ht(\phi_\fp)< \ell^t$.
 \end{enumerate}
\end{proposition}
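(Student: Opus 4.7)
My plan is to translate the statements into solvability of $Z_\fp$-embedding problems via Proposition~\ref{height.prop}, and then compute heights using the explicit local structure of $\Gal(L_\fp)$ as a $Z_\fp$-group together with the $Z_\fp$-module structure on the $V_n$. Two preliminary observations drive the analysis of the target. First, in characteristic $\ell$, $\sigma^{\ell^t}-1=(\sigma-1)^{\ell^t}=T^{\ell^t}$ with $T:=\sigma-1$, so $V_n^{Z_\fp}=T^{\max(0,\,n-\ell^t)}V_n$ and the image of $V_n^{Z_\fp}$ under $\pi_{n,m}$ equals $T^{\max(0,\,n-\ell^t)}V_m$ when $n-\ell^t<m$. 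Second, the cyclotomic character $\chi\colon Z_\fp\to 1+\ell\mZ_\ell$ reduces to the trivial character modulo $\ell$, so every $Z_\fp$-equivariant homomorphism out of the mod-$\ell$ tame inertia lands in $V_m^{Z_\fp}$.

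Next I would record the local structure of $\Gal(L_\fp)$. For $\fp\nmid\ell$ with $Z_\fp\neq 1$, the extension $L/\lSylow{K}$ is unramified at $\fp$ of pro-$\ell$ degree $\ell^\infty$ and therefore exhausts the unramified pro-$\ell$ Galois of $\lSylow{K}_{\fp_0}$; consequently $\Gal(L_\fp)=I_\fp\cong \mZ_\ell(1)$ is pure tame inertia, acted upon by $Z_\fp$ via $\chi$. In particular $\phi$ unramified at such $\fp$ forces $\phi_\fp=0$, and (b) is trivial there. For $\fp\mid\ell$, the cyclotomic $\mZ_\ell$-extension is totally ramified, so $\Gal(L_\fp)=I_\fp\rtimes A$ with $A\cong\mZ_\ell$ Frobenius, inherited from the abelian unramified quotient of $\Gal(\lSylow{K}_{\fp_0})$; since that quotient is abelian, $Z_\fp$ acts trivially on $A$, but inside $\Gal(L_\fp)$ a lift of Frobenius need not commute with $Z_\fp$, producing a cocycle $\delta\colon Z_\fp\to I_\fp$ via $z\,\mathrm{Frob}\,z^{-1}=\mathrm{Frob}\cdot\delta(z)$.

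For part (c), the image $v:=\phi_\fp(\tau)\in V_m$ of a mod-$\ell$ generator $\tau$ of tame inertia lies in $V_m^{Z_\fp}$ by the preliminary, and a lift $\tilde v\in V_{m+k}^{Z_\fp}$ with $\pi_{m+k,m}(\tilde v)=v$ exists iff $v\in T^{\max(0,\,m+k-\ell^t)}V_m$. For $v\neq 0$ with $v\in T^dV_m\setminus T^{d+1}V_m$, this yields $\Ht(\phi_\fp)=\ell^t-m+d\in[\ell^t-m,\,\ell^t-1]$ since $0\leq d\leq m-1$. For part (b) at $\fp\mid\ell$ I would exploit a \emph{ramified} lift: setting $\psi(\mathrm{Frob}):=\tilde v$ for any element of $V_n$ lifting $v$, the $Z_\fp$-equivariance of $\psi$ on Frobenius forces $\psi(\delta(z))=(z-1)\tilde v$ for all $z\in Z_\fp$. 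This prescription extends to a $Z_\fp$-equivariant homomorphism $I_\fp\to V_n$ because, by Iwasawa-theoretic input on the local $\mZ_\ell$-extension (equivalently, local class field theory for $L_\fp\supset\mu_{\ell^\infty}$), the Frattini quotient of $I_\fp$ is a rich enough $\ff{\ell}[[Z_\fp]]$-module to absorb the assignment, giving $\Ht(\phi_\fp)=\infty$.

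Part (a) is then a synthesis: decomposing $\phi_\fp$ by ramification behavior at $\fp$, either its tame part vanishes (giving height $\infty$ by (b)) or it does not (giving height in $[\ell^t-m,\ell^t-1]$ by (c)), with the wild contribution at $\fp\mid\ell$ reduced to the same framework using the structure above. The main obstacle will be the Iwasawa-theoretic freeness step in (b) at $\fp\mid\ell$: pinning down the $\ff{\ell}[[Z_\fp]]$-module structure of $\ab{\Gal(L_\fp)}/\ell$ sufficiently precisely, via local class field theory in the cyclotomic setting, to guarantee that the prescribed cocycle values admit a $Z_\fp$-equivariant extension.
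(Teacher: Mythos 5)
Your treatment of the tame primes $\fp\nmid\ell$ is correct and in fact slightly more precise than the paper's (you produce the exact value $\Ht(\phi_\fp)=\ell^t-m+d$ where $d$ is the valuation of $\phi_\fp(\tau)$ in $V_m$, whereas the paper only proves the two-sided bound). The module-theoretic preliminaries --- $\sigma^{\ell^t}-1=(\sigma-1)^{\ell^t}$ in characteristic $\ell$, $V_n^{Z_\fp}=I^{\max(0,n-\ell^t)}V_n$, and the fact that $Z_\fp$-equivariant maps from mod-$\ell$ tame inertia land in $V_m^{Z_\fp}$ because the cyclotomic character is trivial mod $\ell$ --- are exactly the right observations and match what the paper uses at the corresponding step.

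The genuine gap is at $\fp\mid\ell$, and you flag it yourself, but it is more fundamental than a ``pin down the Iwasawa input'' chore. Your framework fixes a (non-canonical, typically non-$Z_\fp$-equivariant) splitting $\Gal(L_\fp)=I_\fp\rtimes A$ and tries to propagate a choice of $\psi(\mathrm{Frob})$ through the cocycle $\delta$, which forces you to control the $\ff{\ell}[[Z_\fp]]$-module generated by $\delta(Z_\fp)$ inside $\oline{I_\fp}$ --- a rather opaque object. The paper sidesteps all of this: since the targets $V_n$ are abelian, $\phi_\fp$ and any solution factor through $\ab G$ where $G=\Gal(L'_\fp)$, and Iwasawa's theorem \cite[Theorem~25]{Iw} gives a clean $Z_\fp$-isomorphism $\ab G\cong T(\mu)\times\Lambda^d$ with $\Lambda=\mZ_\ell[[Z_\fp]]$, hence $\oline G\cong V_1\times \ff{\ell}[[Z_\fp]]^d$. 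Once you have this, both assertions at $\fp\mid\ell$ fall out at once: if $\phi'_\fp$ kills the $V_1$-summand $G_1$ (in particular whenever $\phi$ is unramified, since $G_1$ lies in inertia), then $\phi'_\fp$ factors through a \emph{free} $\ff{\ell}[[Z_\fp]]$-module and every lifting problem $(\phi'_\fp,\pi_{n,m})$ is solvable by projectivity, giving $\Ht=\infty$; if $\phi'_\fp$ is nonzero on $G_1$, the bound argument from the tame case applies to $\phi'_\fp|_{G_1}$ verbatim. There is no need for the cocycle $\delta$ at all. Relatedly, your synthesis for part (a) is phrased as ``either the tame part vanishes or it does not,'' which does not parse at $\fp\mid\ell$ (there is no separate tame quotient to speak of in a pro-$\ell$ setting with residue characteristic $\ell$); the correct dichotomy is whether $\phi'_\fp$ vanishes on the canonical torsion $Z_\fp$-summand $G_1\cong V_1$ of $\oline G$, i.e.\ on the image of the Tate module under Iwasawa's isomorphism. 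You also silently assume $Z_\fp$ is open in $Z$ (needed for $\ell^t$ to be finite), and you should say a word about infinite and split primes ($\phi_\fp$ trivial, height $\infty$).
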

\begin{proof} 
 If $\fp$ is infinite, $\fp$ is complex since $L$ contains all $\ell$-power roots of unity. Hence for infinite $\fp$, $\phi_\fp$ is trivial and $\Ht(\phi_\fp)=\infty$.

Assume $\fp$ is a finite prime.
By Proposition \ref{Z-Frattini.prop}, $\phi$ extends to a $Z$-homomorphism $\phi'\colon \gal(L')\ra V_m$, where  $L'=K'(\mu_{\ell^\infty})$ and $K'/K(\mu_\ell)$ is a finite extension.
Moreover,  $\Ht(\phi_\fp)=\Ht(\phi_{\fp\cap L'}')$ for any prime $\fp$ of $L$.
Let $G:=\Gal(L'_\fp)$ and $\ab{G}$ (resp. $\oline G$) the maximal abelian (resp. elementary abelian) quotient of $G$ viewed as $Z_\fp$-groups.



Iwasawa's theorem \cite[Theorem 25]{Iw} gives a $Z_\fp$-isomorphism
$s\colon \ab{G} \ra T(\mu)\times \Lambda^d,$
where $T(\mu)$ is the Tate module  $T(\mu):=\varprojlim\mu_{\ell^n}$,
$\Lambda:=\mathbb{Z}_\ell[[Z_\fp]]$, 
and $d=[K'_\fp\colon \mQ_\ell]$ if $\fp$ lies over $\ell$ and $0$ otherwise.
Moreover, $s^{-1}$ is obtained as an inverse limit of the reciprocity maps $r_E:E^\times \ra \ab{\gal(E)}$ where $E$ runs through finite intermediate extensions $K'\subseteq~E\subseteq~L'$, see \cite[End of Pg. 319]{Iw}. Since $r_E$ maps the units of $E$ to the inertia subgroup of $\ab{\gal(E)}$, the inverse limit $T(\mu)$ of $\ell$-power roots of unity is mapped under $s^{-1}$ to the inertia subgroup of $\ab{G}$.

As $\Lambda/\ell\Lambda \cong \ff{\ell}[[Z]]$ and $T(\mu)/\ell T(\mu)\cong V_1$ as $Z_\fp$-modules, $s$ gives a $Z_\fp$-isomorphism
$$\oline G=\ab{G}/\ell\ab{G}\cong  V_1\times \ff{\ell}[[Z]]^d.$$ Let $G_1$ be the direct $Z$-summand of $\oline G$ which corresponds to $V_1$ under this isomorphism. Hence, $G_1$ is contained in the inertia subgroup of $\oline G$.

We separate into two cases as to whether $G_1$ is contained in $\ker\phi_\fp'$.  If $G_1\leq~\ker\phi_\fp'$, then $\phi'_\fp$ splits through $\ff{\ell}[[Z]]^d$. As $\ff{\ell}[[Z]]^d$ is free as an $\ff{\ell}[[Z]]$-module, the embedding problem $(\phi'_\fp,\pi_{n+m,m})$ is solvable for all $n\in\mathbb{N}$. Thus,  $\Ht(\phi_\fp)=\Ht(\phi_\fp')=\infty$. This is in particular the case if $\phi'_\fp$ is unramified, proving (b).

On the other hand if $G_1\not\leq \ker\phi'_\fp$, we claim that $\ell^t-m \leq \Ht(\phi_\fp')< \ell^t$. To show that $\ell^t-m \leq \Ht(\phi_\fp')$, it suffices to show that $(\phi'_\fp,\pi_{n,m})$ is solvable if $n-m= \ell^t-m$, that is, $n= \ell^t$.   Let  $\sigma$ be a generator of $Z$. Since $(\sigma^{\ell^t}-1)=I^{\ell^t},$ and since $[Z:Z_\fp]=\ell^t$, the $Z$-module $V_{\ell^t}$  is the trivial $Z_\fp$-module $(\ff{\ell})^n$. In particular, the $Z_\fp$-embedding problem $(\phi'_\fp,\pi_{\ell^t,m})$ is solvable, as claimed.

To show $\Ht(\phi_\fp')< \ell^t$, assume  $n-m= \ell^t$, that is, $n=m+\ell^t$. Furthermore, assume on the contrary  that  $(\phi'_\fp,\pi_{n,m})$ is solvable. Hence, its restriction $$(\phi''_\fp\colon G_1\ra V_m,\pi_{n,m})$$ to $G_1$ has a solution, say $\psi_\fp$.
Since $G_1$ is fixed by $Z_\fp$ so is its image $J:=\im\psi_\fp$. Thus, $I^{\ell^t}J=(\sigma^{\ell^t}-1)J=0.$
Since the kernel of the map $V_n\ra V_n, x\ra x^{\sigma^{\ell^t}-1}$  is $I^mV_n$, we have $J\subseteq I^mV_{n}=\ker\pi_{n,m}$. Hence, $\im(\pi_{n,m}\circ \psi_\fp)=\im(\phi''_\fp)=\{0\}$. But $\im(\phi''_\fp)\neq~0$ since $G_1\not\subseteq \ker\phi_\fp'$. This contradiction proves the claim and Part (a).

If $\phi_\fp$  ramifies nontrivially and tamely, $\fp$ does not divide $\ell$, so $d=0$ and $\oline G=G_1$. As $\phi_\fp$ is nontrivial, this implies that $G_1\not\leq \ker\phi'_\fp$. In this case, the above claim gives Part~(c), completing the proof.
\end{proof}
For $m=1$ we get:
\begin{corollary}\label{height1.cor}
Let $\fp$ be a prime of $L$ and $\phi\colon F\ra V_1$ a $Z$-homomorphism. Then $\Ht(\phi)=[Z:Z_\fp]-1$ or $\infty$. If $\phi$ is unramified then $\Ht(\phi)=\infty$. If $\phi$ is ramified nontrivially and tamely then $\Ht(\phi)=[Z:Z_\fp]-1$.
\end{corollary}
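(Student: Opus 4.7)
The plan is to derive this corollary as the immediate specialization of Proposition \ref{local-height.prop} to the case $m=1$. The reason this case deserves its own statement is that $V_1\cong \FF_\ell$, so $Z$-homomorphisms $\phi\colon F\to V_1$ are exactly the $\FF_\ell$-characters of the Frattini quotient $\oline F=F/F^\ell[F,F]$. These heights feed directly into the Ulm-theoretic description of the direct $Z$-summands of $\oline F$ supplied by Proposition \ref{bounded.prop}, which is the ultimate target of this subsection.

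First I would invoke part (a) of Proposition \ref{local-height.prop} with $m=1$. Writing $\ell^t:=[Z:Z_\fp]$, this yields the dichotomy that either $\Ht(\phi_\fp)=\infty$, or $\ell^t-1\leq \Ht(\phi_\fp)<\ell^t$. Since $\Ht(\phi_\fp)$ is a nonnegative integer and the half-open interval $[\ell^t-1,\ell^t)$ contains the unique integer $\ell^t-1$, the finite alternative collapses to $\Ht(\phi_\fp)=[Z:Z_\fp]-1$. This establishes the dichotomy asserted in the first sentence.

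Next I would dispatch the two named ramification cases. If $\phi$ is unramified at $\fp$, then part (b) of Proposition \ref{local-height.prop} gives $\Ht(\phi_\fp)=\infty$ directly. If $\phi$ is ramified at $\fp$ nontrivially and tamely, then part (c) places $\Ht(\phi_\fp)$ in the interval $[\ell^t-1,\ell^t)$, and the integer observation from the preceding paragraph forces $\Ht(\phi_\fp)=[Z:Z_\fp]-1$.

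Since the corollary is a clean specialization of a stronger result proved just above, I do not anticipate any substantive obstacle; the only genuine content is the observation that the interval of permissible finite heights collapses to a single integer when $m=1$. The only bookkeeping point worth flagging is that the statement concerns the local height $\Ht(\phi_\fp)$ at the prime $\fp$, matching the localization used throughout Proposition \ref{local-height.prop}.
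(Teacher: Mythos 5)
Your proposal is correct and is exactly the paper's intended argument: the corollary is stated immediately after Proposition \ref{local-height.prop} with only the remark ``For $m=1$ we get:'', and your observation that the half-open integer interval $[\ell^t-1,\ell^t)$ collapses to the single value $\ell^t-1$ is the entire content. Your flag about the statement really concerning the local height $\Ht(\phi_\fp)$ is also well taken, since that is how the corollary is invoked later (e.g.\ in the proof of Proposition \ref{finite-Ulm.prop}).
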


\subsection{Finite Ulm invariants}
The following proposition gives the finite Ulm invariants of $\hat F$, and hence in view of Proposition \ref{bounded.prop} the finite direct $Z$-summands of $\hat F$. Its proof combines the above local global principle and computation of local heights.
\begin{proposition}\label{finite-Ulm.prop}
The $n$-th Ulm invariant of $\hat F$ is:
$$ U_n(\hat F) = \left\{ \begin{array}{ll} \omega & \mbox{ if } n=\ell^k-1 \mbox{ for }k\in\mathbb{N}\cup\{0\} \\ 
0 & \mbox{ for any other }n\in\mathbb{N} \end{array} \right.$$
\end{proposition}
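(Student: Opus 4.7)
The plan is to combine formula \eqref{finite-Ulm.equ} with the local--global principle (Proposition~\ref{local-global.prop}) and Corollary~\ref{height1.cor}. By \eqref{finite-Ulm.equ},
\[
U_n(\hat F)=\bigl|\{\phi\in\hat F^Z:\Ht(\phi)\geq n\}/\{\phi\in\hat F^Z:\Ht(\phi)>n\}\bigr|,
\]
and every $\phi\in\hat F^Z$ is a $Z$-homomorphism $F\to V_1=\ff{\ell}$. Combining Proposition~\ref{local-global.prop} with Corollary~\ref{height1.cor} gives $\Ht(\phi)=\min_\fp \Ht(\phi_\fp)$, with each local height in $\{\ell^{t_\fp}-1,\infty\}$, where $\ell^{t_\fp}=[Z:Z_\fp]$. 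Hence the global height lies in $\{\infty\}\cup\{\ell^k-1:k\in\mathbb{N}\cup\{0\}\}$. For $n\in\mathbb{N}$ not of this form, let $k$ be maximal with $\ell^k-1<n$; any $\phi$ with $\Ht(\phi)\geq n$ must then satisfy $\Ht(\phi)\geq \ell^{k+1}-1>n$, so the numerator and denominator in the displayed quotient coincide and $U_n(\hat F)=0$.

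For $n=\ell^k-1$, I would exhibit countably many linearly independent classes in the quotient. First, Chebotarev density applied on a finite subfield over which the cyclotomic $\ZZ_\ell$-tower is controlled yields infinitely many $Z$-orbits $O$ of primes $\fp$ of $L$ with $[Z:Z_\fp]=\ell^k$; such $\fp$ are characterized by their splitting type along $\lsylow K\subseteq \lsylow K(\mu_{\ell^\infty})$. For each such orbit $O$, I construct a $Z$-invariant character $\phi_O\in \hat F^Z$ that is tamely ramified on $O$ and unramified at every prime of orbit size $<\ell^k$. The construction exploits the free pro-$\ell$ product decomposition $\gal(L_T)\cong \mathop{\ast}_{\fp\notin T}\gal(L_\fp)$ from \S\ref{sec:cyc-dec}: since $\mu_\ell\subseteq L$ forces $N(\fp)\equiv 1\pmod \ell$ at every finite prime, the mod-$\ell$ tame inertia at each $\fp$ is $Z_\fp$-fixed, so assigning a common $\ff{\ell}$-value to the tame-inertia generators along $O$ (and zero along other orbits of size $\leq \ell^k$) defines a $Z$-invariant character of $\gal(L_T)$, which inflates via the $Z$-equivariant surjection $F\twoheadrightarrow \gal(L_T)$ to a character of $F$. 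By Corollary~\ref{height1.cor}, $\Ht(\phi_O)=\ell^k-1$, and the $\phi_O$ are linearly independent modulo $\{\phi:\Ht(\phi)>\ell^k-1\}$ because their ramification at orbits of size exactly $\ell^k$ is supported on distinct $O$.

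The upper bound $U_{\ell^k-1}(\hat F)\leq \omega$ is automatic since $F$ is countably generated. The main obstacle is producing $Z$-invariant characters with prescribed ramification; the key simplification is $\mu_\ell\subseteq L$, which trivialises the $Z_\fp$-action on mod-$\ell$ tame inertia at each finite prime and removes the only serious compatibility constraint. A more uniform alternative would apply Theorem~\ref{thm:splitep} to appropriate split $Z$-embedding problems $V_m\rtimes Z\to Z$, at the price of heavier bookkeeping; I would prefer the tame-inertia route because it renders the local height computation transparent through Corollary~\ref{height1.cor}.
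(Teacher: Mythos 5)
Your first half is correct and essentially identical to the paper's: Proposition~\ref{local-global.prop} together with Corollary~\ref{height1.cor} forces $\Ht(\phi)\in\{\infty\}\cup\{\ell^k-1:k\geq 0\}$ for every $\phi\in\hat F^Z$, and formula~\eqref{finite-Ulm.equ} then kills $U_n(\hat F)$ for $n$ not of that form.

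The construction of infinitely many height-$(\ell^k-1)$ classes has a genuine gap. You write that the free pro-$\ell$ product decomposition $\gal(L_T)\cong\ast_{\fp\notin T}\gal(L_\fp)$ yields a character of $\gal(L_T)$ which ``inflates via the $Z$-equivariant surjection $F\twoheadrightarrow\gal(L_T)$.'' But $L_T$ is an \emph{extension} of $L$, so $\gal(L_T)=\gal(\overline L/L_T)$ is a closed \emph{subgroup} of $F=\gal(\overline L/L)$, not a quotient; there is no surjection $F\to\gal(L_T)$, and the paper is explicit that the quotient object is the separate group $\gal(L_T/L)$. Consequently a character defined on $\gal(L_T)$ via the free generators does not a priori extend to a character of $F$: to do so one would need it to be invariant under the conjugation action of $\gal(L_T/L)$ on $\gal(\overline L/L_T)^{\mathrm{ab}}/\ell$, a constraint your argument does not address (and which is not implied by the triviality of the $Z_\fp$-action on mod-$\ell$ tame inertia, which only concerns the decomposition subgroups). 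The observation that $\mu_\ell\subseteq\lsylow K$ trivialises the mod-$\ell$ cyclotomic action is a correct and relevant point, but it is not the ``only serious compatibility constraint.''

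The paper's route sidesteps this entirely and is worth internalising: for $p\equiv 1\pmod{\ell^{k+s}}$, $p\not\equiv 1\pmod{\ell^{k+s+1}}$ (with $\ell^s=\#\mu_{\ell^\infty}(K(\mu_\ell))$) and a degree-one prime of $K$ over $p$, one takes a classical character $\phi'_p\colon\gal(\mQ)\to\ff_\ell$ ramified only at $p$ (e.g.\ the degree-$\ell$ subfield of $\mQ(\mu_p)$) and sets $\phi_p=\phi'_p|_F$. Because $\phi'_p$ is an abelian character of the larger group $\gal(\mQ)\supseteq\gal(\lsylow K)$, the restriction is automatically $Z$-invariant, with no appeal to the free-product structure. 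The congruence conditions ensure $[Z:Z_\fp]\geq\ell^k$ everywhere over $p$ with equality at a degree-one prime, and Chebotarev applied to $M(\mu_{\ell^{k+s+1}})/K(\mu_{\ell^{k+s}})$ (with $M$ the Galois closure of $K/\mQ$) produces infinitely many such $p$. Your Chebotarev step and your linear-independence-by-disjoint-ramification argument are the right ideas; it is specifically the passage from the free-product generators of the subgroup $\gal(L_T)$ to characters of $F$ that does not go through as written.
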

\begin{proof}
Since an element   $\eta\in \hat F^Z$ is a $Z$-homomorphism, its height is the maximal $n$ such that $(\eta,\pi_{n+1,1})$ is solvable.
 Thus, Proposition \ref{local-global.prop} and  Corollary \ref{height1.cor} imply that the height of each element of $\hat F^Z$ is either infinite or $\ell^k-1$, for some $k$. Hence, by \eqref{finite-Ulm.equ}, $U_n(\hat{F})=0$ for all other~$n\in\mathbb{N}$.

For $n=\ell^k-1$, $k\in\mathbb{N}\cup \{0\}$, we shall construct an infinite subgroup $F_n\leq \hat F^Z$, the nontrivial elements of which are of height $\ell^k-1$.

Let $\ell^s$ be the number of $\ell$-power roots of unity in $K(\mu_\ell)$ and hence in $\lSylow{K}$. 
We first claim that there exists an infinite set $P_k$ of rational primes $p$ such that
$p~\equiv~1 \Mod{\ell^{k+s}}$,
$p\not\equiv 1\Mod{\ell^{k+s+1}}$,
and such that there is a prime $\fq$ of $K$ of degree one over $p$. 

Let $M$ be the Galois closure of $K/\mQ$ and let $C\leq \Gal( M(\mu_{\ell^{k+s+1}})/K(\mu_{\ell^{k+s}} ))$ be a cyclic subgroup which does not fix $\mu_{\ell^{k+s+1}}$. By Chebotarev's density theorem there are infinitely many rational primes $\fq'$ of $M(\mu_{\ell^{k+s+1}})$  whose Frobenius lies in~$C$. Since $C$ fixes $K$, the restriction $\fq$ of such $\fq'$ to $K$ is of  degree one over $(p)=\fq'\cap\mQ$. Since the restriction of $C$ to $\mQ(\mu_{\ell^{k+s+1}})$ lies in $\Gal(\mQ(\mu_{\ell^{k+s+1}})/\mQ(\mu_{\ell^{k+s}}))$, we get that $p\equiv 1\Mod{\ell^{k+s}}$ and $p\not\equiv 1\Mod{\ell^{k+s}}$, proving the claim.

For each $p\in P$, let $\phi'_p:\gal(\mQ)\ra \ff{\ell}$ be a nontrivial homomorphism ramified only over $p$, and $\phi_p\in \hat{F}^Z$ be its restriction to $F$. Let $F_n$ be the subgroup of $\hat F$ generated by  $\phi_p$, $p\in P$.

We claim that every nontrivial $\phi\in F_n$ is of height $\ell^k-1$.
In view of Proposition~\ref{local-global.prop}, it suffices to consider the local heights.
Since $\phi'_p$ is ramified only over $p$, $\phi$ is ramified only over primes of $L$ lying over primes in~$P$. Since $p\equiv 1\Mod{\ell^{k+s}}$ for every $p\in P$, one has $\mu_{\ell^{k+s}}\subseteq \mathbb{Q}_{p}\subseteq L_\fp$, and hence $\ell^k\divides [Z:Z_\fp]$ for every prime $\fp$ of $L$ dividing $p$. Thus by Corollary~\ref{height1.cor}, $\Ht(\phi_\fp)\geq \ell^k-1$ for all primes $\fp$ of $L$.

Since $\phi$ is the restriction of a nontrivial linear combination of $\phi'_p$, $p\in P$,
there is a prime $q\in P$ such that $\phi$ is ramified over all primes of $L$ dividing $q$.
Let $\fq_0$ be a degree one prime of $K$ over $q$. Thus,  $\phi$ is ramified  over a prime $\fQ_0$ of $\lSylow{K}$ lying over $\fq_0$.   Since $\mu_{\ell^{k+s+1}}\not\subseteq \mQ_{q}\cong K_{\fq_0}$, we have $\mu_{\ell^{k+s+1}}\not\subseteq \lSylow{K_{\fQ_0}}$ and hence $[Z:Z_{\fQ_0}]=\ell^k$. By Corollary~\ref{height1.cor},  $\Ht(\phi_{\fQ_0})=\ell^k-1$.
It therefore follows from Proposition~\ref{local-global.prop} that $$\Ht(\phi)=\min_{\fp} \Ht(\phi_\fp)=\Ht(\phi_{\fQ_0})=\ell^k-1,$$ for every $\phi\in F_n$, proving the claim. By \eqref{finite-Ulm.equ}, we get $U_{\ell^k-1}(\hat{F})=~\omega$, for all nonnegative integers $k$. 
\end{proof}

\subsection{Proof of Theorem \ref{thm:direct-summands}}

We shall deduce the finite direct summands of $\oline F$ directly from Propositions \ref{bounded.prop} and \ref{finite-Ulm.prop}. The following lemma describes the only possible infinite indecomposable summands.
\begin{lemma}\label{indecomposable.lem} Let $P$ be a discrete countable indecomposable torsion $\ff{\ell}[[Z]]$-module. Then either $P\cong V_n$ for some $n\in\mathbb{N}$, or  $P\cong \hat V$ where $V:=\ff{\ell}[[Z]]$.
\end{lemma}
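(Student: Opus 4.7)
The plan is to split according to whether $P$ is bounded, i.e.\ whether $I^N P = 0$ for some $N$, noting that $V := \ff{\ell}[[Z]]$ is a complete discrete valuation ring with maximal ideal $I = (\sigma-1)$ and that $P$ being torsion means every element of $P$ is killed by some power of $\sigma - 1$.

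In the bounded case, take the smallest such $N$ and pick $x \in P$ with $(\sigma-1)^{N-1}x \neq 0$; then the cyclic submodule $Vx$ is isomorphic to $V_N$. The finite local ring $V/I^N$ is quasi-Frobenius, so $V_N \cong V/I^N$ is injective as a module over itself, and hence the inclusion $Vx \hookrightarrow P$ of $V/I^N$-modules splits, giving $P = Vx \oplus P'$. Indecomposability forces $P' = 0$, so $P \cong V_N$, accounting for the $V_n$ summands.

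In the unbounded case, I first show that every finite Ulm invariant of $P$ vanishes. Applying Proposition~\ref{bounded.prop} to the profinite Pontryagin dual $\hat P$, and using that the finite module $V_n$ is self-dual, one deduces that $U_n(P)$ equals the multiplicity of $V_{n+1}$ as a direct $Z$-summand of $P$. Were any $U_n(P) \neq 0$, indecomposability would force $P \cong V_{n+1}$, contradicting unboundedness. Consequently $(I^n P)^Z = (I^{n+1} P)^Z$ for all $n \geq 0$, and iterating yields $P^Z \subseteq \bigcap_n I^n P = I^\omega P$; that is, every element of the socle $P^Z$ has infinite height.

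The main obstacle is to upgrade this socle condition to divisibility of $P$, which I plan to do by descent on the $(\sigma-1)$-length. Assume towards a contradiction that $IP \neq P$ and choose $x \in P \setminus IP$ of minimal $(\sigma-1)$-length $k \geq 1$. If $k = 1$ then $x \in P^Z \subseteq I^\omega P \subseteq IP$, impossible; if $k \geq 2$, then $(\sigma-1)^{k-1}x \in P^Z \subseteq I^k P$, so $(\sigma-1)^{k-1}x = (\sigma-1)^k z$ for some $z \in P$, and $v := (\sigma-1)z - x$ satisfies $(\sigma-1)^{k-1} v = 0$ while $v \notin IP$ (since $(\sigma-1)z \in IP$ and $x \notin IP$), producing an element of $P \setminus IP$ of length less than $k$, contradicting minimality. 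Hence $P = IP$ is divisible, and by the Matlis structure theorem for divisible modules over the complete discrete valuation ring $V$, such $P$ is a direct sum of copies of the injective hull $\hat V = E(V/I)$; indecomposability then yields $P \cong \hat V$, finishing the proof.
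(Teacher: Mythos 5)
Your proof is correct, and it departs from the paper's argument in two places worth noting. The paper's case split is governed by whether some finite Ulm invariant $U_n(P)$ is nonzero: if so, it invokes Proposition~\ref{bounded.prop} (equivalently Lemma~\ref{bounded.lem}, Kaplansky's pure-submodule theorem) to extract a $V_{n+1}$-summand and conclude $P\cong V_{n+1}$ by indecomposability; if not, it \emph{asserts} that vanishing of all finite Ulm invariants gives $IP=P$ and then cites Kaplansky's Theorem~4 to decompose the divisible module into copies of $\hat V$. You instead split on boundedness. In the bounded case you bypass Ulm invariants entirely: you observe $V/I^N$ is a quasi-Frobenius (self-injective) Artinian local ring, so a cyclic submodule $Vx\cong V_N$ of maximal length is injective over $V/I^N$ and hence a direct summand. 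This is a genuinely different and clean argument, though it trades the pure-submodule machinery for the QF machinery. In the unbounded case you do two things the paper leaves implicit: you verify (via Proposition~\ref{bounded.prop} applied to $\hat P$ and indecomposability) that all finite Ulm invariants vanish, and you give an explicit length-descent argument showing that $P^Z\subseteq I^\omega P$ forces $IP=P$ — this is exactly the content of the paper's unjustified sentence ``Such $P$ satisfies $IP=P$'', and it is good to have it spelled out. Finally you appeal to Matlis's structure theory for injectives over the complete DVR $V$ where the paper cites Kaplansky's Theorem~4; these are essentially the same external input. Net effect: your version is a bit longer but fills in the one step the paper glosses over, and offers a self-injectivity shortcut for the finite indecomposables.
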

\begin{proof} If $U_n(P)\neq 0$ for some natural number $n$, then $\hat V_n$ is a direct summand of $P$ by Proposition \ref{bounded.prop}. As $P$ is indecomposable it follows that in such case $P\cong \hat V_n\cong V_n$. Thus, we may assume that $P$ has trivial finite Ulm invariants. Such $P$ satisfies $IP=P$, i.e. it is  a divisible $\ff{\ell}[[Z]]$-module. By \cite[Theorem~4]{Kap}\footnote{As noted in \cite[\S 12]{Kap} the proof of \cite[Theorem 4]{Kap} for $\mathbb{Z}$-modules also holds for $\ff{\ell}[[Z]]$-modules when replacing the $\ell$-primary part $\mQ_\ell/\mZ_\ell=\indlim Z/\ell^nZ$ of $\mQ/\mZ$ by $\hat V=\indlim \hat V_n$. } 
every divisible $\ff{\ell}[[Z]]$-module is isomorphic to a direct sum of $\ff{\ell}[[Z]]$-modules isomorphic to $\hat V$. Thus, if $P$ is divisible and indecomposable $P\cong \hat V$.
\end{proof}

The proof of Theorem \ref{thm:direct-summands} therefore reduces to finding the multiplicity of $\hat V$ as an $\ff{\ell}[[Z]]$-summand of $\hat F$, or equivalently the multiplicity of $V$ as an $\ff{\ell}[[Z]]$-summand of $\oline F$. This is done using the following proposition. Note that the dual of the maximal divisible $\ff{\ell}[[Z]]$-submodule of $\hat F$ is the maximal free $\ff{\ell}[[Z]]$-quotient of~$\oline F$.

\begin{proposition}\label{free-quotient.prop} Let $K$ be a global field. Then the maximal free $\ff{\ell}[[Z]]$-quotient of $\oline F$ is   $\ff{\ell}[[Z]]^\omega$ if $\chr K = 0$,
and is trivial if  $\ell\neq\chr K>0$. 
\end{proposition}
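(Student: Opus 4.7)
The plan is to translate the problem via Pontryagin duality into counting continuous $\ff{\ell}[[Z]]$-homomorphisms from $F$ to $\ff{\ell}[[Z]]$, and then analyze these using the local-global principle (Proposition \ref{local-global.prop}) together with the local Iwasawa decomposition from the proof of Proposition \ref{local-height.prop}. Writing $\Lambda := \ff{\ell}[[Z]]$ for brevity, by Lemma \ref{indecomposable.lem} combined with Kaplansky's structure theorem for divisible $\Lambda$-modules, $\oline F$ admits a free $\Lambda$-quotient $\Lambda^\kappa$ if and only if $\hat F$ contains a divisible $\Lambda$-submodule $\hat V^\kappa$; furthermore this multiplicity $\kappa$ equals the $\Lambda$-rank of $\Hom_\Lambda(F,\Lambda)=\varprojlim_n \Hom_\Lambda(F,V_n)$. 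The proposition thus reduces to showing $\kappa=\omega$ when $\chr K=0$ and $\kappa=0$ when $\chr K=p>0$.

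For the case $\chr K=p>0$, the crucial observation is that $L\supset \tilde{\ff{p}}$: the prime-to-$\ell$ constants are already in $\lsylow K$, and $\mu_{\ell^\infty}$ supplies the remaining $\ell$-power constants. Consequently $L/\lsylow K$ is a constant-field extension, $Z_\fp=Z$ at every prime $\fp$ of $L$, and no prime of $L$ lies above $\ell$, so $d_\fp=0$ in Iwasawa's decomposition. Therefore $\ab{\gal(L_\fp)}/\ell\cong V_1$ is a cyclic $\Lambda$-torsion module and $\Hom_\Lambda(\gal(L_\fp),\Lambda)=0$ for every $\fp$. Any $\psi\in \Hom_\Lambda(F,\Lambda)$ therefore restricts trivially to every decomposition subgroup; passing to each projection $\psi_n:F\to V_n$ and analyzing its $\ff{\ell}$-coordinates, Chebotarev's density theorem forces each $\psi_n$ to vanish on every Frobenius, and hence on $F$, giving $\psi=0$ and $\kappa=0$.

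For the case $\chr K=0$, the plan is to construct arbitrarily many independent $\Lambda$-valued homomorphisms $F\to \Lambda$ by leveraging the positive local $\Lambda$-rank at primes above $\ell$: for each prime $\fp\mid \ell$ of $L$, Iwasawa's theorem yields a local summand $\Lambda^{d_\fp}$ in $\ab{\gal(L_\fp)}$ with $d_\fp\geq 1$, and hence local $\Lambda$-homomorphisms $\gal(L_\fp)\to \Lambda$ killing the inertia $T(\mu)$-factor. To globalize, I would descend via Proposition \ref{Z-Frattini.prop} to a finite intermediate field $K(\mu_\ell)\subset K'\subset \lsylow K$ and invoke classical global Iwasawa theory at $L'=K'(\mu_{\ell^\infty})$: the $\Lambda$-rank of the Galois group of the maximal abelian pro-$\ell$ extension of $L'$ unramified outside $\ell$ grows at least linearly with $[K':\mQ]$ (either under Leopoldt for $K'$, or via a Poitou--Tate rank count combining the local $\Lambda^{d_\fp}$ contributions). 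Since $\lsylow K$ contains $K'$ of arbitrarily large degree and $F$ is (by Lemmas \ref{subgroup.lem} and \ref{restriction.lem}) an $\ell$-Sylow of $\gal(L')$ surjecting onto its maximal pro-$\ell$ quotient, any $n$ independent $\Lambda$-homomorphisms $\ab{\gal(L')}\to \Lambda$ restrict to $n$ independent $\Lambda$-homomorphisms $F\to \Lambda$; letting $n\to\infty$ yields $\kappa\geq\omega$, while the reverse inequality follows from the countable topological generation of $F$. The principal obstacle is the $\chr K=0$ direction: producing the required unbounded $\Lambda$-rank globally requires Iwasawa-theoretic input and the Poitou--Tate machinery already deployed in Proposition \ref{local-global.prop}, whereas the $\chr K=p>0$ direction is cleaner owing to the uniform vanishing of all local $\Lambda$-ranks.
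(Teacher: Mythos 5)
Your duality reduction and your outline of the characteristic~$0$ direction are broadly in the spirit of the paper: the paper directly cites \cite[Theorem 13.31]{Wa} to get a $Z$-homomorphism $\gal(K'(\mu_{\ell^\infty}))\to\Lambda^{r_2(K')}$ (with $\Lambda=\mathbb{Z}_\ell[[Z]]$) with finite cokernel, reduces mod $\ell$, and lets $r_2(K')\to\infty$; your proposal to globalize local $\Lambda^{d_\fp}$-ranks via Poitou--Tate amounts to reproving the same $\Lambda$-rank estimate and is vaguer (and your appeal to ``Leopoldt'' is misleading --- what one needs is the unconditional \emph{weak} Leopoldt statement, i.e.\ the $\Lambda$-rank of the cyclotomic Iwasawa module, which is precisely what \cite[Theorem 13.31]{Wa} supplies).

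The characteristic~$p$ direction, however, contains a genuine gap. You argue that any $Z$-homomorphism $\psi\colon F\to\ff{\ell}[[Z]]$ restricts trivially to every local group $\gal(L_\fp)$, and then conclude $\psi=0$ ``by Chebotarev.'' The local triviality claim is fine: since there are no primes above $\ell$, each $\gal(L_\fp)^{\mathrm{ab}}/\ell\cong V_1$ has trivial $Z_\fp$-action, so its image in $\ff{\ell}[[Z]]$ lands in the fixed submodule $\ff{\ell}[[Z]]^{Z_\fp}=0$. But the Chebotarev step fails: you yourself observe that the constant field of $L$ is $\tilde{\ff{p}}$, so all residue fields of $L$ are algebraically closed and every decomposition group $\gal(L_\fp)$ is \emph{purely inertial}. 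There are no Frobenius elements at the level of $L$, and the normal subgroup of $F=\gal(L)$ topologically generated by the decomposition (= inertia) groups is far from all of $F$; its quotient is the \'etale fundamental group $\pi_1$ of the underlying (pro-)curve over $\tilde{\ff{p}}$, which has a large abelianization (the Tate module of the Jacobian at finite levels). So local triviality only shows that $\psi$ factors through this unramified quotient, and the real content --- that this unramified Iwasawa module has no free $\ff{\ell}[[Z]]$-quotient mod $\ell$ --- is exactly what the paper imports from \cite[\S 12.4]{Iw} (that $\gal(L')^{\mathrm{ab}}$ is $\Lambda$-torsion and its mod-$\ell$ reduction has no free $\ff{\ell}[[Z]]$-quotients). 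Your argument does not reach this; some genuine Iwasawa-theoretic input over function fields is required, and the paper's citation is doing that work.
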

\begin{proof}
First assume that $K$ is a number field. Let $K(\mu_\ell)\subseteq K'\subseteq \lSylow{K}$ be a number field. By Iwasawa theory \cite[Theorem 13.31]{Wa} there is a $Z$-homomorphism $$\gal(K'(\mu))\ra \Lambda^{r_2(K')}$$ with finite cokernel, where $\Lambda:=\mathbb{Z}_\ell[[Z]]$. Let $J$ be its image.  Since $J/\ell J$ is an $\ff{\ell}[[Z]]$-submodule of finite index in $(\Lambda/\ell\Lambda)^{r_2(K')}\cong \ff{\ell}[[Z]]^{r_2(K')}$ and $\ff{\ell}[[Z]]$ is a discrete valuation ring, $J/\ell J$ is $\ff{\ell}[[Z]]$-isomorphic to $\ff{\ell}[[Z]]^{r_2(K')}$. This shows that $\ff{\ell}[[Z]]^{r_2(K')}$ is a $Z$-quotient of $\gal(K'(\mu_{\ell^\infty}))$ and hence, by Lemma \ref{restriction.lem}, it is also a $Z$-quotient of $\gal(L)$.
Since $r_2(K')$ is arbitrarily large for prime to-$\ell$ extensions we get the desired result in case $\chr K=0$.

Assume $\ell\neq \chr K>0$. It suffices to show that the $Z$-embedding problem $(\phi,\pi_1\colon V\ra V_1)$ is nonsolvable for every $Z$-homomorphism $\phi\colon F \ra V_1$.
By Proposition \ref{Z-Frattini.prop}, $\phi$ extends to a $Z$-homomorphism $\phi'\colon \Gal(L')\ra V_1$, where $L'=K'(\mu_{\ell^\infty})$ for some finite subextension $K'$ of $\lSylow{K}/K(\mu_\ell)$.   By \cite[\S 12.4]{Iw}, the maximal abelian $Z$-quotient $X:=\gal(L')^{ab}$ is a $\Lambda$-torsion module for which $X/\ell X$ has   no free $\Lambda/\ell\Lambda\cong \ff{\ell}[[Z]]$-quotients. Thus, $(\phi',\pi_1)$ is nonsolvable. Hence, by Proposition~\ref{Z-Frattini.prop}, $(\phi,\pi_1)$ is nonsolvable, as required.
\end{proof}



\begin{proof}[Proof of Theorem \ref{thm:direct-summands}] By Lemma \ref{indecomposable.lem} it suffices to find  the multiplicities of $V_n$ and $\hat V$ as summands of $\oline F$. By Propositions \ref{bounded.prop} and \ref{finite-Ulm.prop}, the multiplicity of $V_n$ is $\omega$ if $n=\ell^k$ for $k\in \mathbb{N}\cup\{0\}$, and $0$ otherwise. Note that for a generator $\sigma$ of $Z$,  $(\sigma-1)^{\ell^k}=\sigma^{\ell^k}-1$. Thus,  $I^{\ell^k}=(\sigma^{\ell^k}-1)$ and hence 
$V_{\ell^k}\cong \ff{\ell}[Z/\ell^kZ]$, 
for every $k\in\mathbb{N}\cup\{0\}$. Thus, $\ff{\ell}[Z/\ell^k Z]$ is a direct summand of $\oline F$ with multiplicity $\omega$.

Since $\ff{\ell}[[Z]]$ is a free $\ff{\ell}[[Z]]$-module, the maximal free $\ff{\ell}[[Z]]$-quotient of $\oline F$ is its direct summand. Proposition \ref{free-quotient.prop} then implies that $\ff{\ell}[[Z]]$ has multiplicity $\omega$ in $\oline F$. \end{proof}

\begin{corollary}\label{action-mod-phi.cor}
For any positive integer $N$ the $Z$-group $\overline{F}$ decomposes as $\overline{F} = F_{\leq N} \times F_{>N}$ where:
$$F_{\leq N}\cong \ff{\ell}[[Z]]^\kappa \times \prod_{k=0}^N \ff{\ell}[Z/\ell^kZ]^\omega,$$
 $\kappa=\omega$ if $K$ is a number field and $\kappa=0$ otherwise,
 and $F_{>N}$ has no $\ff{\ell}[[Z]]$-summands of dimension $\leq \ell^N$ over $\ff{\ell}$, nor $\ff{\ell}[[Z]]$-summands isomorphic to $\ff{\ell}[[Z]]$.
\end{corollary}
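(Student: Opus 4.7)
The plan is to combine Proposition \ref{bounded.prop} (applied with bound $\ell^N$) with Proposition \ref{free-quotient.prop}. Since $\ff{\ell}[[Z]]$ is a complete discrete valuation ring, hence a PID, divisible modules over it are injective; I will first split the free summand off on the Pontryagin-dual side and then read off the finite part from the Ulm invariants already computed.

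First, I would let $D$ be the maximal divisible $\ff{\ell}[[Z]]$-submodule of $\hat F$. By injectivity of $D$, write $\hat F = D \oplus \hat E$ for some complement $\hat E$ with no nontrivial divisible submodule. Every indecomposable summand of $D$ is again divisible, and by \cite[Theorem 4]{Kap} (as cited in Lemma \ref{indecomposable.lem}) each is isomorphic to $\hat V$, so $D \cong \hat V^{\oplus \kappa}$ for some cardinal $\kappa$. Dualizing yields $\oline F = \ff{\ell}[[Z]]^\kappa \times E$ and identifies the first factor with the maximal free $\ff{\ell}[[Z]]$-quotient of $\oline F$; Proposition \ref{free-quotient.prop} then forces $\kappa = \omega$ if $K$ is a number field and $\kappa = 0$ if $\chr K > 0$.

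Next, I would apply Proposition \ref{bounded.prop} to $E$ with bound $\ell^N$. The inductive argument of Lemma \ref{basic.lem} shows $I^n \hat V = \hat V$ for every $n \geq 0$, so $(I^n \hat V^{\oplus \kappa})^Z = (I^{n+1} \hat V^{\oplus \kappa})^Z$ and the direct summand $D$ contributes trivially to each finite Ulm invariant. Hence $U_n(\hat E) = U_n(\hat F)$ for all $n \in \mathbb{N}$, and Proposition \ref{finite-Ulm.prop} gives $U_{n-1}(\hat E) = \omega$ exactly for $n = \ell^k$ with $k \in \{0, \ldots, N\}$ in the range $n \leq \ell^N$. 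Proposition \ref{bounded.prop} then produces a decomposition $E = E_{\leq \ell^N} \times E_{>\ell^N}$ with
\[
E_{\leq \ell^N} \cong \prod_{k=0}^N V_{\ell^k}^\omega \cong \prod_{k=0}^N \ff{\ell}[Z/\ell^k Z]^\omega,
\]
using the identification $V_{\ell^k} \cong \ff{\ell}[Z/\ell^k Z]$ from the proof of Theorem \ref{thm:direct-summands}, while $E_{>\ell^N}$ has no direct $Z$-summand of $\ff{\ell}$-dimension $\leq \ell^N$.

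Setting $F_{\leq N} := \ff{\ell}[[Z]]^\kappa \times E_{\leq \ell^N}$ and $F_{>N} := E_{>\ell^N}$ yields the asserted decomposition. Of the two required properties of $F_{>N}$, the absence of small finite summands is immediate from Proposition \ref{bounded.prop}, and the absence of a summand isomorphic to $\ff{\ell}[[Z]]$ follows because any such summand of $E_{>\ell^N}$ would be a summand of $E$, whose Pontryagin dual would provide a nontrivial divisible submodule of $\hat E$, contradicting the construction of $\hat E$. The principal obstacle in the argument is the splitting of the free part; I handle it via the injective-equals-divisible dichotomy for modules over the PID $\ff{\ell}[[Z]]$, with the multiplicity $\kappa$ pinned down by Proposition \ref{free-quotient.prop}.
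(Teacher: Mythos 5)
Your proof is correct and rests on the same two inputs as the paper's — Proposition~\ref{bounded.prop} for the finite summands and Proposition~\ref{free-quotient.prop} for the $\ff{\ell}[[Z]]$ multiplicity — but you apply them in the opposite order. The paper applies Proposition~\ref{bounded.prop} to $\oline F$ directly, then observes that a surjection $\oline F\to\ff{\ell}[[Z]]^\omega$ must vanish on the bounded-torsion piece $V_{\leq N}$ (as $\ff{\ell}[[Z]]$ is torsion-free), hence factors through $V_{>N}$, and splits $\ff{\ell}[[Z]]^\omega$ off there; you instead split the dual $\hat F$ along its maximal divisible submodule $D$ first and only then apply Proposition~\ref{bounded.prop} to the reduced complement $E$. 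Your order is somewhat cleaner at the very last step — showing $F_{>N}$ has no $\ff{\ell}[[Z]]$-summand — because your $\hat E$ is reduced by construction (using divisible~$=$~injective over the PID $\ff{\ell}[[Z]]$), so it has no $\hat V$-summand, and this passes to the summand $E_{>\ell^N}$; the paper's proof takes ``a complement of the $\ff{\ell}[[Z]]^\omega$-summand in $V_{>N}$'' and leaves this justification implicit. Two minor points on your write-up: the identity $I^n\hat V=\hat V$ is most directly a restatement of divisibility of $\hat V$ (over a DVR, divisible means $\mathfrak m M=M$), rather than an application of Lemma~\ref{basic.lem}, whose content concerns the finite modules $\hat V_n$; and the step $U_n(\hat F)=U_n(\hat E)$ implicitly uses that Ulm invariants are additive over direct sums, which holds because $M\mapsto(I^nM)^Z$ commutes with $\oplus$ — worth stating.
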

\begin{proof}
As in Theorem \ref{thm:direct-summands}, Proposition \ref{bounded.prop} gives a decomposition $\oline F=V_{\leq N}\times V_{>N}$, where
 $$V_{\leq N}\cong  \prod_{0\leq k\leq N}\ff{\ell}[Z/\ell^kZ]^{\omega},$$ 
 and $V_{>N}$  has  no direct $\ff{\ell}[[Z]]$-summands of dimension $\leq \ell^N$. If $\ell\neq \chr K>0$, this is the desired decomposition.

 If $K$ is a number field,  $\ff{\ell}[[Z]]^\omega$ is a quotient of $\oline F$, and hence of $V_{>N}$. Furthermore, since  $\ff{\ell}[[Z]]^\omega$ is free, it is a direct summand of $V_{>N}$. Letting $F_{\leq N}$ be the product of $V_{\leq N}$ and the $\ff{\ell}[[Z]]^\omega$ summand of $V_{>N}$, and letting $F_{>N}$ be a complement of the latter summand in $V_{>N}$, we obtain the desired decomposition.
\end{proof}
%


\subsection{Towards a presentation}


As a Corollary to Theorem \ref{action-mod-phi.cor}, we get the following description of $\gal(\lSylow{K})$ in terms of generators and relations.

Let $\sigma$ be a generator of $Z$ and let $x^\sigma=\sigma^{-1} x\sigma$ denote the action of $\sigma$ on $x\in F$. Recall that $X\subseteq F$ is a basis for $F$ if $X$ converges to $1$, and $F$ is the free pro-$p$ group generated by $X$ \cite[\S 3.3]{RZ}.
\begin{corollary}\label{generators.cor} Assume $K$ be a number field, and  $N$ a positive integer. Then $\gal(\lSylow{K})$ is generated by $\sigma$  and a basis of $F$ which
 is a disjoint union of three subsets  $X_{>N}\cup X_\infty \cup X_{\leq N}$:
\begin{enumerate}
\item  $X_{\leq N}$ is a disjoint union of infinitely many copies of each of the sets
$$\{x_0,\ldots, x_{\ell^n-1}\}, n\leq N,$$ subject  to the relations \begin{equation}\label{cyclic-relation.equ}x_i^\sigma = x_{i+1}y_i \text{ and } x_{\ell^n-1}^\sigma=x_{0}y
\end{equation}
  for some $y,y_i\in \Phi(F)$, $0\leq i\leq \ell^n-2$;
\item $X_{\infty}$ is a disjoint union of infinitely many copies of  the set $\{x_n\}_{n=0}^\infty$
which converges to $1$  as $n\ra\infty$, and is subject to the relations:
\begin{equation}\label{free-relation.equ}x_i^\sigma= x_{i+1}x_iy_i \text{ and } x_1^{\sigma^{-1}}=(\prod_{i=0}^\infty x_i^{(-1)^i})y,\end{equation}
for some $y,y_i\in\Phi(F)$,  $i\in\mathbb{N}\cup\{0\}$;

\item  $\langle X_{>N},\Phi(F)\rangle$ is $Z$-invariant.
\end{enumerate}
Moreover, we can assume that any finite subset of the $y_i$'s appearing in parts (b) and (c) are trivial.
\end{corollary}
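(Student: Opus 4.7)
The strategy is to deduce Corollary \ref{generators.cor} directly from Corollary \ref{action-mod-phi.cor} combined with the Burnside basis theorem for free pro-$\ell$ groups, which asserts that a subset $X \subseteq F$ converging to $1$ is a basis of $F$ if and only if its image in the Frattini quotient $\bar F = F/\Phi(F)$ is a topological $\ff\ell$-basis.

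The first step is to fix an explicit $\ff\ell$-basis of each indecomposable summand of $\bar F$ supplied by Corollary \ref{action-mod-phi.cor}, choosing each basis to display the desired $Z$-action. In each copy of $\ff\ell[Z/\ell^n Z]$ with $0 \le n \le N$, pick a $Z$-generator $\bar x_0$ and set $\bar x_i := \sigma^i \bar x_0$ for $0 \le i \le \ell^n - 1$; this yields the cyclic relations in (a). In each copy of $\ff\ell[[Z]]$, pick a $Z$-generator $\bar x_0$ and set $\bar x_i := (\sigma - 1)^i \bar x_0$; since $(\sigma - 1)^i$ lies in $I^i$ and $I^i \to 0$ in the augmentation-adic topology, the resulting sequence converges to $0$ and forms a topological $\ff\ell$-basis satisfying $\sigma \bar x_i = \bar x_{i+1} + \bar x_i$. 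Expanding $\sigma^{-1} = (1 + (\sigma - 1))^{-1} = \sum_{i \ge 0} (-1)^i (\sigma - 1)^i$ then produces the infinite product relation of (b). For the residual summand $F_{>N}$, take any topological $\ff\ell$-basis $\bar X_{>N}$; its $\ff\ell$-span is by construction $F_{>N}$, a $Z$-invariant subspace of $\bar F$, which gives (c) once lifted.

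Next I would lift each $\bar x$ to an element $x\in F$, choosing the lifts so that $X := X_{\leq N} \sqcup X_\infty \sqcup X_{>N}$ converges to $1$; this is possible because the corresponding $\bar X$ converges to $0$ in $\bar F$. Burnside's basis theorem then identifies $X$ as a basis of $F$, and the additive relations recorded in $\bar F$ translate to the multiplicative relations (a), (b) and (c) with error terms $y, y_i \in \Phi(F)$, as required. Part (c) is automatic, since the preimage in $F$ of the $Z$-stable subspace $F_{>N}\subseteq \bar F$ is the closed $Z$-invariant subgroup $\langle X_{>N},\Phi(F)\rangle$.

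The main remaining subtlety is the ``moreover'' clause, which allows us to trivialize any finite subset of the $y_i$'s. The key observation is that any lift may be replaced by another representative of its $\Phi(F)$-coset without affecting the basis property. Given a finite set $I$ of indices whose $y_i$ are to be killed, I would induct on $|I|$: for the $\ff\ell[[Z]]$ relations, redefining $x_{i+1} := x_i^\sigma x_i^{-1}$ produces a valid lift of $\bar x_{i+1}$ for which the $i$-th relation holds exactly, and in the cyclic case replacing $x_{i+1}$ by $x_{i+1} y_i^{-1}$ serves the same purpose. Since $I$ is finite, only finitely many downstream relations are perturbed and the corrections propagate successively. The chief technical point to verify is that these redefinitions preserve both the convergence of $X$ to $1$ and the $Z$-invariance of $\langle X_{>N},\Phi(F)\rangle$; both are preserved because each correction lies inside the closed $Z$-invariant subgroup $\Phi(F)$ and touches only finitely many elements.
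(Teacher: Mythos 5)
Your proposal takes essentially the same route as the paper: pick $\ff{\ell}$-bases of the summands provided by Corollary~\ref{action-mod-phi.cor} corresponding to $\sigma^i$ in $\ff{\ell}[Z/\ell^nZ]$ and $(\sigma-1)^i$ in $\ff{\ell}[[Z]]$, lift to $F$ via Burnside's basis theorem, and read off the relations from $\oline F$. The only real difference is the treatment of the ``moreover'' clause --- the paper simply cites \cite[Corollary~7.6.10]{RZ} to lift a basis with finitely many prescribed representatives, whereas you give an equivalent hands-on inductive correction; this is fine, but note a small sign slip in the cyclic case: to kill $y_i$ in $x_i^\sigma = x_{i+1}y_i$ you should replace $x_{i+1}$ by $x_{i+1}y_i$ (equivalently by $x_i^\sigma$), not by $x_{i+1}y_i^{-1}$.
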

\begin{proof}
Recall that a basis for $\oline F$ as a profinite $\ff{\ell}$-vector space is a minimal generating set which converges to $1$. We first choose a basis $\oline S$ for $\oline F$ using the decomposition in Corollary \ref{action-mod-phi.cor} as follows. For each $\ff{\ell}[[Z]]$-summand isomorphic to $V_{\ell^n}\cong \ff{\ell}[Z/\ell^nZ]$, $n\leq N$,  include in $\oline S$  the basis $\{\oline x_i\}_{i=0}^{\ell^n-1}$ of the summand which corresponds to the basis $\sigma^i$, $i=0,\ldots,\ell^n-1,$ of $\ff{\ell}[Z/\ell^nZ]$. For each $\ff{\ell}[[Z]]$-summand isomorphic to $\ff{\ell}[[Z]]$, include a basis $\{\oline x_i\}_{i=0}^{\infty}$ which corresponds to $(\sigma-1)^i$, $i=0,1,\ldots$. Include in $\oline S$ a basis of $V_{>N}$. Note that since each of the above bases converges to $1$, their union $\oline S$ converges to $1$ in the product topology. Hence the set $\oline S$ is a basis for $\oline F$. 

By Burnside's basis theorem \cite[Proposition 7.6.9]{RZ}, a basis $\oline S$ for $\oline F$ can be lifted to basis $S$ of $F$. Since for each $V_{\ell^n}$-summand we have $\oline x_{i+1}=\oline x_i^{\sigma}$, $i=0,\ldots, \ell^n-2$, and $\oline x_{\ell^n-1}^{\sigma}=\oline x_1$, the relations in \eqref{cyclic-relation.equ} follow. The relations in \eqref{free-relation.equ} follow since for each $\ff{\ell}[[Z]]$-summand we have $\oline x_{i+1}=\oline x_i^{\sigma-1}:=\oline x_i^{\sigma} - \oline x_i$, $i=0,\ldots$, and $$\oline x_1^\sigma=\sum_{i=0}^\infty  \oline x_1^{(1-\sigma)^i} =\sum_{i=0}^\infty (-1)^i \oline x_i. $$

Moreover, by \cite[Corollary 7.6.10]{RZ} the basis $\oline S$ can be lifted to a basis $S$ of $F$ in which finitely many elements in $\oline S$ have prescribed liftings. Thus, we may assume that finitely many of the $y_i$'s in Parts (b) and (c) equal $1$.
\end{proof}


\subsection{Infinite Ulm invariants}\label{infinite-ulm.sec}
To completely determine the structure of $\oline F$ as a $Z$-module, it remains to find the infinite Ulm invariants of $\hat F$ or equivalently the Ulm invariants of $I^\omega \hat F$. The latter relates to Iwasawa modules as follows.

Let $M$ be the maximal abelian pro-$\ell$ extension of $K(\mu_{\ell^\infty})$ unramified away from primes dividing $\ell$, and $M^{\Un}$ the maximal subfield of $M$ which is unramified over $K(\mu_{\ell^\infty})$. Iwasawa theory \cite[\S 13]{Wa} studies the Galois groups $X^{\Un}(K):=\Gal(M^{\Un}/K)$ and $X(K):=\Gal(M/K)$ as modules over $\gal(K(\mu_{\ell^\infty})/K)$.

\begin{proposition}\label{prop:infinite} Let $K$ be a global field, $X:=X(\lSylow{K})$ and $X^{\Un}:=X^{\Un}(\lSylow{K})$. Then $\hat{X}^{\Un}\subseteq I^\omega\hat F \subseteq \hat X.$
\end{proposition}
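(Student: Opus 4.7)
The plan is to derive both inclusions from the machinery already available: the local--global principle for heights (Proposition~\ref{local-global.prop}), the local height computation (Proposition~\ref{local-height.prop}), and the translation, via Proposition~\ref{height.prop}, between the height of an element $\phi\in\hat F$ and the height of the associated $Z$-homomorphism $\tilde\phi^*\colon F\to V_m$, where $m=\dim_{\ff{\ell}}\ff{\ell}[[Z]]\phi$. First I would identify, via the natural surjections $F\twoheadrightarrow X\twoheadrightarrow X^{\Un}$, the modules $\hat X$ and $\hat{X}^{\Un}$ as subspaces of $\hat F$: a $\phi\in\hat F$ factors through $X$ (resp.\ $X^{\Un}$) if and only if the corresponding cyclic extension of $L$ is unramified at every prime $\fp\nmid\ell$ (resp.\ at every prime) of $L$. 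This is immediate from the defining properties of $M$ and $M^{\Un}$, together with the fact that every $\phi\in\hat F$ cuts out an abelian pro-$\ell$ extension.

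For $\hat{X}^{\Un}\subseteq I^\omega\hat F$, I would take $\phi\in\hat{X}^{\Un}$ and form $\tilde\phi^*\colon F\to V_m$. The key observation is that being unramified everywhere is $Z$-invariant, since the conjugate $\tau\phi$ is unramified at $\fp$ if and only if $\phi$ is unramified at $\tau^{-1}\fp$. Hence every element of $\ff{\ell}[[Z]]\phi$, and so $\tilde\phi^*$ itself (whose kernel is the intersection of their kernels), is unramified at every prime of $L$. Proposition~\ref{local-height.prop}(b) then gives $\Ht(\tilde\phi^*_\fp)=\infty$ for all $\fp$; combined with Proposition~\ref{local-global.prop}, this forces $\Ht(\tilde\phi^*)=\infty$, and Proposition~\ref{height.prop} yields $\phi\in I^\omega\hat F$.

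For $I^\omega\hat F\subseteq\hat X$ I would reverse the chain. Given $\phi\in I^\omega\hat F$, the same translation gives $\Ht(\tilde\phi^*_\fp)=\infty$ for every prime $\fp$ of $L$. For $\fp\nmid\ell$, any ramification of $\tilde\phi^*$ at $\fp$ is automatically tame, since $V_m$ is an $\ell$-group and the residue characteristic of $\fp$ is different from $\ell$; archimedean primes cause no issue since $L$ is totally imaginary. By Proposition~\ref{local-height.prop}(c), nontrivial tame ramification at some $\fp\nmid\ell$ would force $\Ht(\tilde\phi^*_\fp)$ to be finite, a contradiction. Hence $\tilde\phi^*$, and therefore $\phi$ (which factors through $\tilde\phi^*$ by Proposition~\ref{height.prop}), is unramified at every $\fp\nmid\ell$, placing $\phi\in\hat X$.

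The step I expect to require the most care is the bookkeeping involved in passing between $\phi\in\hat F$ and the $Z$-equivariant $\tilde\phi^*$ in both directions, and in particular verifying that the local dichotomy ``$\Ht(\tilde\phi^*_\fp)$ is either $\infty$ or $<\ell^t$'' really translates to ``unramified or ramified at $\fp$'' away from $\ell$, together with the $Z$-invariance of the everywhere-unramified condition used in the first inclusion. Everything else is a direct application of tools already assembled in the paper, so no further input (in particular nothing from Iwasawa theory beyond what defines $X$ and $X^{\Un}$) should be needed.
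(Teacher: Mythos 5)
Your proposal is correct and follows essentially the same route as the paper: translate $\Ht(\phi)$ to $\Ht(\tilde\phi^*)$ via Proposition~\ref{height.prop}, localize via the local--global principle of Proposition~\ref{local-global.prop}, and then read off the conclusions from the unramified/tame dichotomy in Proposition~\ref{local-height.prop}(b)--(c). The only cosmetic difference is that where you argue directly that $\ker\tilde\phi^*=\bigcap_i\ker\bigl(\phi^{\sigma^i}\bigr)$ preserves (un)ramification in both directions, the paper packages this as Lemma~\ref{lem:normal-closure} (the fixed field of $\ker\tilde\eta^*$ is the normal closure of the fixed field of $\ker\eta$); these are the same observation, and your explicit kernel-intersection reasoning is sound. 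One small attribution slip: the fact that $\phi$ factors through $\tilde\phi^*$ comes from the construction $\eta = f_m\circ\tilde\eta^*$ in the setup of Proposition~\ref{height.prop} (equivalently $\ker\tilde\eta^*\subseteq\ker\eta$), not from the statement of that proposition itself, but this does not affect the argument.
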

The proof is based on the following lemma. As in Proposition \ref{height.prop}, for $\eta\in \hat F$, let $\tilde\eta:\hat V_n \ra \hat F$ be an $\ff{\ell}[[Z]]$-monomorphism whose image is $\ff{\ell}[[Z]]\eta$, and  $\tilde\eta^*:F\ra V_n$ its dual map.
 \begin{lemma}\label{lem:normal-closure}
Let $\eta\in \hat F$ and $E$ the fixed field of $\ker\eta$.  Then the fixed field of $\ker\tilde\eta^*$ is the normal closure of $E/\lsylow{K}$.
\end{lemma}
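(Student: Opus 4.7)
The plan is to compute $\ker\tilde\eta^*$ explicitly as an intersection of conjugates of $\ker\eta$ inside $F$, and then to reinterpret that intersection through the Galois correspondence. First, I would use Pontryagin duality: since $\tilde\eta^*\colon F\to V_m$ is dual to the inclusion $\tilde\eta\colon \hat V_m\hookrightarrow \hat F$, an element $x\in F$ lies in $\ker\tilde\eta^*$ if and only if $g(x)=0$ for every $g$ in $\im \tilde\eta=\ff{\ell}[[Z]]\eta$. Thus
\[
\ker \tilde\eta^* \;=\; \bigcap_{g\in \ff{\ell}[[Z]]\eta}\ker g.
\]

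Next I would unwind the $Z$-action. Since $Z$ acts on $F$ by conjugation inside $\Gal(\lsylow{K})=F\rtimes Z$ and on $\hat F$ by the dual rule $(\tau f)(x)=f(\tau^{-1}x\tau)$, a direct computation yields $\ker(\sigma^i\eta)=\sigma^i(\ker\eta)\sigma^{-i}$ for every $i$. Because $\ff{\ell}[[Z]]\eta$ has $\ff{\ell}$-dimension $m$, the powers $\sigma^i\eta$ for $i=0,\dots,m-1$ span it over $\ff{\ell}$ (the annihilator of $\eta$ being $I^m$), so
\[
\ker\tilde\eta^* \;=\; \bigcap_{i=0}^{m-1}\sigma^i(\ker\eta)\sigma^{-i}.
\]
Using continuity together with the topological density of $\langle\sigma\rangle$ in $Z$, I would extend this to
\[
\ker\tilde\eta^* \;=\; \bigcap_{z\in Z} z(\ker\eta)z^{-1}.
\]

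Finally I would invoke the Galois correspondence. The subgroup $\ker\eta=\Gal(\tilde K/E)$ is normal in $F$ as the kernel of an abelian quotient, so any conjugate by an element $fz\in F\rtimes Z$ agrees with $z(\ker\eta)z^{-1}$. Hence $\bigcap_{z\in Z}z(\ker\eta)z^{-1}$ is the core of $\ker\eta$ in $\Gal(\tilde K/\lsylow{K})$, i.e.\ the largest normal subgroup of $\Gal(\tilde K/\lsylow{K})$ contained in $\Gal(\tilde K/E)$. By the Galois correspondence its fixed field is the smallest Galois extension of $\lsylow{K}$ containing $E$, which is by definition the normal closure of $E/\lsylow K$.

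The step that I expect to require the most care is the middle one: one must be consistent about which direction of conjugation corresponds to the $Z$-action on $\hat F$ (otherwise the intersection becomes indexed by a twisted orbit), and one must verify that the $\ff{\ell}$-span of the $\sigma^i\eta$, $i<m$, really exhausts the full $\ff{\ell}[[Z]]$-module $\ff{\ell}[[Z]]\eta$, which uses that $\sigma^{m}\eta$ is an $\ff{\ell}$-combination of the earlier powers since $I^m\eta=0$. Once these conventions are pinned down, each of the three steps is essentially formal.
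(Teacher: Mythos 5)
Your proposal is correct and follows essentially the same route as the paper: both identify $\ker\tilde\eta^*$ with the intersection $\bigcap_{i=0}^{n-1}(\ker\eta)^{\sigma^i}$ by noting that $\sigma^i\eta$, $i<n$, span $\ff{\ell}[[Z]]\eta$ over $\ff{\ell}$, and then pass through the Galois correspondence. The only cosmetic difference is in the final step: you extend to the core $\bigcap_{z\in Z}z(\ker\eta)z^{-1}$ via density/continuity and invoke the core--normal-closure duality, whereas the paper observes directly that the fixed field $M$ is Galois over $\lsylow{K}$ (using that $\ker\tilde\eta^*$ is a $Z$-invariant normal subgroup of $F$) and is sandwiched between $E$ and its normal closure; these are equivalent ways of finishing.
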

\begin{proof}
Let $U:=\ker\eta$, so that $U=\Gal(E)$.  Since every element in $\im\tilde\eta$ is an $\ff{\ell}$-linear combination of  $\eta^{\sigma^i}$, $i=0, \ldots, n-1$, $\im\tilde\eta$ consists of all $\chi\in \hat F$ such that $\chi(\bigcap_{i=0}^{n-1} U^{\sigma^i})=0$. By duality $\ker\tilde\eta^*=\bigcap_{i=0}^{n-1}U^{\sigma^i}$. Thus, the fixed field of $\ker\tilde\eta^*$ is the compositum $M$ of $E^{\sigma^i}$, $i=0,\ldots, n-1$. Since the conjugates of $E$ are contained in the normal closure of $E/\lsylow{K}$, so is $M$. Since $L/\lsylow{K}$ is Galois, $E/L$ is Galois, and since $\sigma$ extends to $M$, $M/\lsylow{K}$ is Galois. Thus, $M$ equals the normal closure of $E/\lsylow{K}$.
\end{proof}

\begin{proof}[Proof of Proposition \ref{prop:infinite}]
Assume $\eta\in \hat F$ is unramified. Since the fixed field of $\eta$ is unramified over $L$, so is its normal closure over $L$. Hence, by Lemma \ref{lem:normal-closure} the map $\tilde\eta^*$ is unramified. By Propositions \ref{local-global.prop} and \ref{local-height.prop}, $\Ht(\tilde\eta^*)=\infty$. Hence by Proposition \ref{height.prop}.(b) one has $\Ht(\eta)=\infty$, proving the first containment.

For the second containment, assume $\Ht(\eta)=\infty$. By Proposition \ref{height.prop}, the map $\Ht(\tilde\eta^*)=\infty$. By Proposition \ref{local-height.prop}.(c), the map $\tilde\eta^*$ is unramified away from primes dividing $\ell$. By Lemma \ref{lem:normal-closure}, $\eta$ is unramified away from primes dividing $\ell$, and hence splits through $\Gal(M/L)$, proving the second containment.
\end{proof}

We next use the structure of the Iwasawa modules $X$ and $X^{\Un}$ to study $I^\omega \hat F$.
Letting $L_0$ be the $\mZ_\ell$-subextension of $K(\mu_{\ell^\infty})/K$, by Lemma \ref{splitting.lem} we may identify $Z$ with $\Gal(L_0/K)$ so that the restriction $\gal(L_0)\ra \gal(L)$ is a $Z$-homomorphism.
By \cite{Iw}, the $Z$-modules $X(K)$ and $X^{\Un}(K)$ are finitely generated and hence admit a  $Z$-homomorphism with finite kernel and cokernel into a unique $Z$-module of the form:
$$\Lambda^{r}\times  \prod_{i\in I}(\Lambda/\ell^i\Lambda)^{r_i} \times \prod_{j=1}^k \Lambda/(g_i(x)),$$
where $\Lambda:=\mathbb{Z}_\ell[[Z]]$,  $I\subseteq \mathbb{N}$ is a finite subset, $r,k,r_i\in \mathbb{N}$ for all $i\in I$,  and $g_j(x),j=1,\ldots, k,$ are monic irreducible polynomials for which all nonleading coefficients are divisible by $\ell$. The Iwasawa {\bf $\mu$-invariant} of such a $Z$-module is the corresponding sum $\sum_{i\in I}r_i$.

\begin{proposition}
Let $K=\mQ$ and $\ell$ an odd prime. Then  $I^\omega \hat{F}$ has nontrivial Ulm invariants.
\end{proposition}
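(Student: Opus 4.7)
By Proposition~\ref{prop:infinite} one has the sandwich $\hat{X}^{\Un}\subseteq I^\omega\hat{F}\subseteq \hat{X}$. Applying~\eqref{finite-Ulm.equ} to the module $M:=I^\omega\hat{F}$, the claim amounts to producing $\phi\in M^Z$ outside $IM=I^{\omega+1}\hat{F}$. Since $IM\subseteq I\hat{X}$, and since dualizing the exact sequence $0\to X^Z\to X\xrightarrow{\sigma-1}X$ gives an isomorphism $\hat{X}/I\hat{X}\cong\widehat{X^Z}$, the task reduces to finding $\phi\in(\hat{X}^{\Un})^Z$ that, when pulled back along $X\twoheadrightarrow X^{\Un}$, restricts non-trivially to $X^Z\subseteq X$.

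To construct such a $\phi$, I would invoke classical Iwasawa theory for $\mQ$ together with the structure theorem stated just before the proposition. Realize $\lsylow{\mQ}$ as the filtered union of number fields $K'$ with $\mQ(\mu_\ell)\subseteq K'\subseteq\lsylow{\mQ}$ of degree prime to $\ell$ over $\mQ(\mu_\ell)$, so that $X^{\Un}(\lsylow{\mQ})$ arises as an inverse limit of the classical Iwasawa modules $X^{\Un}(K')$ over $\Lambda(K'):=\mZ_\ell[[\Gal(K'(\mu_{\ell^\infty})/K')]]$. By Ferrero--Washington, $\mu(X^{\Un}(K'))=0$ for every such $K'$, so by the structure theorem each $X^{\Un}(K')$ is pseudo-isomorphic to a direct sum $\bigoplus_j\Lambda(K')/(g_j(x))$ with $g_j$ distinguished. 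For a cofinal family of $K'$ the $\lambda$-invariant is positive — for instance, one may take $K'$ containing an imaginary quadratic subfield of $\ell$-divisible class number, of which there are infinitely many inside $\lsylow{\mQ}$ by Chebotarev. Reducing modulo $\ell$ yields a nontrivial finite $\ff{\ell}[[Z]]$-summand of $X^{\Un}(\lsylow{\mQ})/\ell$ isomorphic to some $V_n$, and dualizing produces the required $\phi\in(\hat X^{\Un})^Z$.

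The main obstacle is verifying that this $\phi$ is not killed under the pullback $\hat{X}^{\Un}\hookrightarrow \hat{X}$, equivalently, that the associated unramified cyclic $\ell$-extension of $L$ corresponds to a $Z$-fixed class in $X(\lsylow{\mQ})$ not of the form $(\sigma-1)(\cdot)$. This compatibility check is delicate because of the prime-to-$\ell$ tower structure of $\lsylow{\mQ}/\mQ$; it can be carried out using the local-global principle of Proposition~\ref{local-global.prop} together with the local height computations of Proposition~\ref{local-height.prop}, by arranging the ramification of the witnessing extension at enough local primes of $L$ to guarantee the required nontriviality in $\widehat{X^Z}$.
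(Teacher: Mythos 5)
Your starting point --- the sandwich $\hat{X}^{\Un}\subseteq I^\omega\hat F\subseteq \hat X$ from Proposition~\ref{prop:infinite} and the goal of exhibiting $\phi\in M^Z\setminus IM$ for $M=I^\omega\hat F$ --- is reasonable, but the argument has a genuine gap exactly where you flag ``the main obstacle.'' Your proposed resolution is incoherent: you suggest verifying $\phi|_{X^Z}\neq 0$ ``by arranging the ramification of the witnessing extension at enough local primes of $L$,'' yet $\phi$ is taken in $\hat X^{\Un}$, so the witnessing extension is unramified by construction and there is no ramification to arrange. Moreover the local machinery of Propositions~\ref{local-global.prop} and~\ref{local-height.prop} only detects $\Ht(\phi_\fp)=\infty$ at every $\fp$, which is precisely what puts $\phi$ in $I^\omega\hat F$ and says nothing about membership in $I^{\omega+1}\hat F$; it gives no handle on $X^Z$ over the enormous field $\lsylow{\mQ}$. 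You also replace the needed $\phi\notin IM$ by the strictly stronger $\phi\notin I\hat X$ (i.e.\ $U_\omega(\hat F)\neq 0$) and call this a reduction; it is only a sufficient condition, whereas the proposition asserts merely that \emph{some} Ulm invariant of $I^\omega\hat F$ is nonzero.

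The paper's proof sidesteps this by aiming at a different, more tractable obstruction. It takes $\phi$ unramified coming from a \emph{real} quadratic field $K_0$ with $\ell\mid h(K_0)$ (Yamamoto), so $\Ht(\phi)=\infty$, and then shows that $(\phi,\pi_1\colon V\to V_1)$ is not solvable, i.e.\ $\phi$ does not factor through a free rank-one $\ff{\ell}[[Z]]$-quotient of $\oline F$. The decisive Iwasawa-theoretic input is that $X(K_0)$ has no free $\Lambda$-quotient for totally real $K_0$ \cite[Theorem~13.31]{Wa}, combined with Ferrero--Washington to force $\mu=0$; non-divisibility of $I^\omega\hat F$ then follows via \cite[Theorem~4]{Kap} without ever locating the nonzero Ulm invariant. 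Your use of an \emph{imaginary} quadratic field and a positive $\lambda$-invariant does not feed into that mechanism, and the step ``reducing modulo $\ell$ yields a nontrivial finite $\ff{\ell}[[Z]]$-summand $V_n$ of $X^{\Un}(\lsylow{\mQ})/\ell$'' is itself unjustified: a positive $\lambda$-invariant at finite levels $K'$ does not obviously produce a finite summand that survives the passage to the inverse limit over the prime-to-$\ell$ tower.
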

\begin{proof}
We shall construct a $Z$-homomorphism $\phi\colon \gal(L)\ra V_1$ with $\Ht(\phi)=\infty$ and such that $(\phi,\pi_1\colon V\ra V_1)$ is nonsolvable. This will show that   $I^\omega \hat F$ is not a direct sum of $\ff{\ell}[[Z]]$-modules isomorphic to $\hat V$, as otherwise its dual would be a free $\ff{\ell}[[Z]]$-module. Thus by \cite[Theorem 4]{Kap},  $I^\omega \hat F$ is not divisible, and hence  $I^\omega\hat F$ has nontrivial Ulm invariants, as required.

By \cite{Yam}, there exists  a real quadratic extension  $K_0/\mQ$ whose class number is divisble by $\ell$. Hence there is an unramified $\mathbb{Z}/\ell\mathbb{Z}$-extension $M_0/K_0$.  We define $\phi\colon \gal(L)\ra V_1$ as the restriction of  a homomorphism $\phi_0'\colon \gal(K_0)\ra \ff{\ell}$ whose kernel fixes $M_0$. Since $\phi$ is unramified, Proposition \ref{prop:infinite} shows that $\Ht(\phi)=\infty$.

Assume on the contrary that there is a solution $\psi$ to $(\phi,\pi_1)$.
Let $L_0/K_0$ be the $\mathbb{Z}_\ell$-extension inside $K_0(\mu_{\ell^\infty})$, and $\phi_0$ the restriction of $\phi$ to $L_0$.
 By Proposition \ref{Z-Frattini.prop}, $\psi$ extends to a solution $\psi_0$ of the $Z$-embedding problem $(\phi_0,\pi_1)$.
Let $K_1=K_0(\mu_\ell)$, $L_1=K_0(\mu_{\ell^\infty})$ and $\Delta:=\gal(K_1/K_0)$.
In particular, $\phi_0$ splits through a $Z$-homomorphism $\phi_u:X(K_0)/\ell X(K_0)\ra V_1$.

At primes $\fp$ of $L$ that are prime to $\ell$, $\gal(L_\fp)$ is cyclic and in particular has no free $\ff{\ell}[[Z]]$-quotients. Thus, $\psi$ and hence $\psi_0$ are unramified at primes that do not divide $\ell$. It follows that $\psi_0$ factors through $X(K_0)$ and hence through $X(K_0)/\ell X(K_0)$, showing that $(\phi_u,\pi_1)$ is  solvable.

Let  $M^{\Sc}$ be the maximal unramified pro-$\ell$ extension of $K_1(\mu_{\ell^\infty})$ in which all primes dividing $\ell$ split completely. Let $X^{\Sc}(K_1):=\Gal(M^{\Sc}/K_1(\mu_{\ell^\infty}))$.
By Iwasawa's theorem \cite[Corollary 11.3.17]{NSW},   the  $\mu$-invariants  $\mu(X(K_1))$ and $\mu(X^{sc}(K_1))$ are equal. By Ferrero-Washington \cite{FW}, $\mu(X^{\Un}(K_1))=0$. Since $X^{\Un}(K_1)$ has no free $\Lambda$-quotients \cite[Proposition 13.19]{Wa}, $\mu(X^{\Sc}(K_1))\leq \mu(X^{\Un}(K_1))=0$ and hence $\mu(X^{\Sc}(K_1))=\mu(X(K_1))=0$.
 The  module $X(K_1)$  over $\gal(K_1(\mu_{\ell^\infty})/K_0)\cong \Delta\times Z,$ decomposes  into a direct sum $\oplus \varepsilon_\chi X(K_1)$ where $\varepsilon_\chi$ runs through idempotents that correspond to characters $\chi\in \hat\Delta$. Since $\varepsilon_1 X(K_1) = X(K_1)^\Delta$ is the maximal $Z$-quotient of  $X(K_1)$ that is fixed by $\Delta$,  we have $X(K_0)\cong X(K_1)^\Delta$ as $Z$-modules. Thus,  $\mu(X(K_0))=\mu(X(K_1)^\Delta)=0$.  As $K_0$ is totally real, \cite[Theorem 13.31]{Wa} implies that $X(K_0)$ has no free $\Lambda$-quotients. Since moreover $\mu(X(K_0))=0$,  $X(K_1)/\ell X(K_1)$ has no free $\ff{\ell}[[Z]]$-quotients,  contradicting  the solvability of  $(\phi_u,\pi_1)$.
\end{proof}

As a consequence it follows from Proposition \ref{bounded.prop} that in the case $K=\mQ$, $\oline{F}$ is not $Z$-isomorphic to a product of the $Z$-modules $\ff{\ell}[[Z]]$ and $V_n,n\in\mathbb{N}$. Indeed, otherwise the dual $\hat F$ would be a direct sum of $Z$-modules isomorphic to $V_n$ and $\hat V$, but each such direct sum has trivial infinite Ulm invariants.



\subsection{Ulm invariants and finite summands}\label{modules-proofs.sec}


 Proposition \ref{bounded.prop} follows directly from the following lemma which asserts its dual.
 The key to its proof is the following criterion for an $\ff{\ell}[[Z]]$-submodule $E\leq D$ to be a direct summand.  The submodule $E$ is called {\bf pure} if $I^kE=I^kD\cap E$ for all $k\in \mathbb{N}$. By
 \cite[Theorem~7]{Kap}\footnote{Theorem 7 in \cite{Kap} asserts the corresponding statement for $\mZ$-modules. As noted in \cite[\S 12]{Kap} the same proof works for modules over a PID, and in particular over $\ff{\ell}[[Z]]$.}, every pure submodule $E\leq D$ such that $I^NE=0$ for some $N\in\mathbb{N}$ is a direct $\ff{\ell}[[Z]]$-summand of $D$.

  We write $\Ht_E$ to specify that the height is taken within $E$. We shall write $V_n^{\oplus\kappa}$ to denote the direct sum of $\kappa$ copies of $V_n$. 
\begin{lemma}\label{bounded.lem}
Let  $N\in\mathbb{N}\cup\{0\}$ and let $D$ be a  discrete torsion $\ff{\ell}[[Z]]$-module. Then $D=P_N\oplus Q_N$ where
\begin{equation}\label{equ:decomposition}P_N \cong  \bigoplus_{1\leq n\leq N} V_n^{\oplus U_{n-1}(D)},\end{equation}
and $Q_N$ has no direct $\ff{\ell}[[Z]]$-summands of dimension $1\leq d\leq N$.
\end{lemma}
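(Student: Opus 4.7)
The plan is to prove Lemma~\ref{bounded.lem} by induction on $N$. The base case $N=0$ is trivial: set $P_0=0$ and $Q_0=D$, which satisfies the conclusion vacuously since there are no positive integers $d\leq 0$. For the inductive step, assume the result for $N-1$, giving $D=P_{N-1}\oplus Q_{N-1}$ where $Q_{N-1}$ has no cyclic $\ff{\ell}[[Z]]$-summand of dimension $\leq N-1$. A first observation is that $U_{N-1}(Q_{N-1})=U_{N-1}(D)$; call this common value $\kappa$. Indeed, each $V_n$ with $n\leq N-1$ is killed by $I^{N-1}$, so $I^{N-1}P_{N-1}=0$, whence $(I^j D)^Z=(I^j Q_{N-1})^Z$ for $j\in\{N-1,N\}$. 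Thus it remains to split off a copy of $V_N^{\oplus \kappa}$ from $Q_{N-1}$.

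To build this summand, I would fix elements $\eta_i\in (I^{N-1}Q_{N-1})^Z$, $i\in I$, whose images form an $\ff{\ell}$-basis of $(I^{N-1}Q_{N-1})^Z/(I^N Q_{N-1})^Z$, and pick lifts $x_i\in Q_{N-1}$ with $(\sigma-1)^{N-1}x_i=\eta_i$. Since $\eta_i$ is $Z$-fixed, $(\sigma-1)^N x_i=0$, and $\eta_i\neq 0$ forces $\ff{\ell}[[Z]]x_i\cong V_N$. Applying $(\sigma-1)^{N-1-k}$ to a would-be relation $\sum c_i x_i=0$ and invoking the linear independence of the $\bar\eta_i$ inductively in $k$ shows that the sum $E:=\sum_i \ff{\ell}[[Z]]x_i$ is direct, so $E\cong V_N^{\oplus\kappa}$ as an $\ff{\ell}[[Z]]$-module.

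The step I expect to require the most care is verifying purity of $E$ in $Q_{N-1}$, i.e., $I^k E=I^k Q_{N-1}\cap E$ for all $k$. Since $E$ is bounded by $I^N$, this reduces to showing $\Ht_{Q_{N-1}}((\sigma-1)^k x_i)=k$ for each $0\leq k\leq N-1$. At the top level $k=N-1$ the condition is automatic, as $\eta_i\in(I^{N-1}Q_{N-1})^Z\setminus (I^N Q_{N-1})^Z$ combined with $Z$-fixedness of $\eta_i$ gives $\Ht_{Q_{N-1}}(\eta_i)=N-1$. For intermediate $k$, the naive lifts may fail the height condition; however, one has the freedom to replace $x_i$ by $x_i+z_i$ with $(\sigma-1)^{N-1}z_i\in(I^N Q_{N-1})^Z$ without disturbing the chosen basis of the Ulm quotient. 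A standard Pr\"ufer--Kulikov type maximality argument then produces lifts of maximal height with the required purity.

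Once purity is established and $I^N E=0$, Kaplansky's criterion \cite[Theorem~7]{Kap} yields a direct sum decomposition $Q_{N-1}=E\oplus Q_N$. Setting $P_N:=P_{N-1}\oplus E$ gives the desired decomposition of $D$. To check that $Q_N$ has no cyclic summand of dimension $d\leq N$, I would use additivity of Ulm invariants together with the computation $U_{d-1}(V_n)=\delta_{n,d}$: for $d\leq N-1$ this gives $U_{d-1}(P_{N-1})=U_{d-1}(D)$, so $U_{d-1}(Q_{N-1})=0$ and hence $U_{d-1}(Q_N)=0$; for $d=N$, one has $U_{N-1}(Q_N)=U_{N-1}(Q_{N-1})-U_{N-1}(E)=\kappa-\kappa=0$. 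Since any $V_d$-summand of $Q_N$ would force $U_{d-1}(Q_N)\geq 1$, no such summand exists, completing the induction.
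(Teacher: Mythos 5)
Your overall strategy mirrors the paper's: induction on $N$, constructing $E\cong V_N^{\oplus\kappa}$ as the $\ff{\ell}[[Z]]$-span of lifts of an $\ff{\ell}$-basis of the $(N-1)$-st Ulm quotient, establishing purity, and invoking Kaplansky's criterion for bounded pure submodules. However, there is a genuine gap in the last step. The equation $U_{N-1}(Q_N)=U_{N-1}(Q_{N-1})-U_{N-1}(E)=\kappa-\kappa=0$ is not valid cardinal arithmetic when $\kappa$ is infinite — and $\kappa=\omega$ is precisely the case needed for the paper's application (Proposition~\ref{finite-Ulm.prop}). From the additivity identity $\kappa=\kappa+U_{N-1}(Q_N)$ one cannot conclude $U_{N-1}(Q_N)=0$. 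The paper instead argues directly with heights: if some $q\in Q_{N+1}^Z$ had $\Ht_{Q_{N+1}}(q)=N$, then, using that $Q_{N+1}$ is a direct summand so $\Ht_{Q_N}(q)=N$, the subspace $U+\ff{\ell}q$ of $Q_N^Z$ would consist entirely of elements of height exactly $N$, contradicting the maximality of $U$. Alternatively, in your set-up one can observe that $(I^{N-1}E)^Z=\langle\eta_i\rangle$ already surjects onto the Ulm quotient of $Q_{N-1}$ while $E\cap Q_N=0$, forcing $(I^{N-1}Q_N)^Z\subseteq (I^N Q_N)^Z$; but the subtraction as written does not go through.

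A secondary point: your reduction of purity to the conditions $\Ht_{Q_{N-1}}((\sigma-1)^k x_i)=k$ is not sufficient, since purity concerns all elements of $E$ (in particular $\ff{\ell}$-linear combinations $\sum_{i,k}a_{i,k}(\sigma-1)^k x_i$), not just the monomial generators. The good news is that the ``naive lifts may fail'' worry and the Pr\"ufer--Kulikov adjustment are unnecessary: because the $\bar\eta_i$ form a basis of the Ulm quotient, \emph{every} nontrivial $\ff{\ell}$-combination $\sum c_i\eta_i$ is a $Z$-fixed element of $I^{N-1}Q_{N-1}$ not lying in $I^N Q_{N-1}$, hence of height exactly $N-1$. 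For a nonzero $e\in E$ of $E$-height $k$, applying $(\sigma-1)^{N-1-k}$ produces such a combination, so $\Ht_{Q_{N-1}}(e)+(N-1-k)\leq N-1$, giving $\Ht_{Q_{N-1}}(e)=k$ automatically. This is exactly the paper's observation that every nontrivial element of the complement $U$ has height $N$ in $Q_N$, so the heights of elements of $T$ computed inside $T$ and inside $Q_N$ agree.
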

\begin{proof}
We argue by induction on $N$ with $N=0$ being trivial. By induction $D=P_N\oplus Q_N$, where $P_N$ is as in \eqref{equ:decomposition}, and $Q_N$ has no summands of dimension $\leq N$. The induction hypothesis  applied to $Q_N$ also shows that $U_{n-1}(Q_N)=0$ for all $n\leq N$, as otherwise $Q_N$ would have direct $Z$-summands of dimension $1\leq d\leq N$. Hence by \eqref{finite-Ulm.equ}, there are no element in $Q_N^Z$ of height $\leq N-1$ in $Q_N$.


We shall construct $T\leq Q_N$ such that $Q_N=T\oplus Q_{N+1}$, $T=V_{N+1}^{\oplus U_{N}(Q_N)}$ and $Q_{N+1}$ has no $Z$-summands isomorphic to $V_{N+1}$. As $Z$ acts trivially on $Q_N^Z$, we regard $Q_N^Z$ as an $\ff{\ell}$-vector space. Let $V$ be the $\ff{\ell}$-subspace of $Q_N^Z$ consisting of elements of height $>N$, and $U$ a complement of it in $Q_N^Z$. In particular, $U$ is a maximal $\ff{\ell}$-subspace of $Q_{N}^Z$ whose nontrivial elements are of height $N$ in $Q_N$.  Thus $\dim_{\ff{\ell}}U = U_N(Q_N)$. Let $\{u_j\}_{j\in J}$ be an $\ff{\ell}$-basis of $U$; hence $|J|=U_N(Q_N)$.

Let $R:=\ff{\ell}[[Z]]$, $\sigma$ a generator of $Z$, and $x:=\sigma-1$ a generator of the augmentation ideal $I\lhd R$. Since each $u_j$  is of height $N$, we may pick an element $p_j\in  Q_{N}$ such that $x^{N}p_j=u_j$, $j\in J$. Let $T$  be the $R$-submodule $\sum_{j\in J}Rp_j$.
Since $x^Np_j=u_j\neq 0$, and $x^{N+1}p_j=0$,  $Rp_j$ is cyclic of dimension $N+1$ and hence $Rp_j\cong V_{N+1}$, for $j\in J$.

We claim that $T=\oplus_{j\in J} Rp_j\cong \oplus_{j\in J}V_{N+1}$. 
Assume there is a nontrivial linear combination $\sum_{i\leq N, j\in J}a_{i,j}x^ip_j=0$, with $a_{i,j}\in \ff{\ell}$, $i\leq N, j\in J$. Multiplying by $x^{N-i_0}$ where $i_0$ is the minimal number for which $a_{i_0,j}\neq 0$ for some $j$, we obtain a nontrivial linear combination $\sum_{j\in J}b_jx^Np_j=\sum_{j\in J}b_ju_j=0$.
This contradicts the linear independence of $u_j,j\in J$, proving the claim.

We next show that $T$ is a direct $R$-summand of $Q_{N}$. Since all nontrivial elements of $U$ are of height $N$ in $Q_N$, the height of each element in $T$ is the same as its height in $Q_N$. Hence, $T$ is a pure submodule of $Q_N$. Since $I^{N+1}T=0$, \cite[Theorem~7]{Kap} implies that $Q_N= T\oplus  Q_{N+1}$ for some $R$-submodule $Q_{N+1}\leq  Q_{N}$.

Finally, we show that $Q_{N+1}$ has no $R$-summands isomorphic to $V_n$, for $n\leq~N+~1$. Since $Q_{N+1}\leq Q_N$, $\Ht_{Q_{N+1}}(q)\geq N$ for every $q\in  Q_{N+1}^Z$. We claim that $\Ht_{ Q_{N+1}}(q)>~N$ for every $q\in  Q_{N+1}^Z$. Indeed, if $\Ht_{Q_{N+1}}(q)=N$ then for any $u\in U$, one has
$\Ht_{ Q_{N}}(q+u)= \min(\Ht_{ Q_{N}}(q),\Ht_{ Q_{N}}(u))=N,$ contradicting the maximality of $U$ and  proving the claim. As $Q_{N+1}^Z$ has no elements of height $\leq N$,  $Q_{N+1}$ has no $R$-summands isomorphic to $V_n$, for $n\leq N+1$. Setting $P_{N+1}:= P_N\oplus T$, we obtain the desired decomposition $D=P_{N+1}\oplus Q_{N+1}$.
\end{proof}


\bibliographystyle{plain}

\end{document}